\title[Stable/unstable Lagrangian laminations]{Towards a higher-dimensional construction of stable/unstable Lagrangian laminations}
\author{Sangjin Lee}
\date{}
\newtheorem{theorem}{Theorem}
\newtheorem{lemma}[theorem]{Lemma}
\theoremstyle{definition}
\newtheorem{definition}[theorem]{Definition}
\theoremstyle{definition}
\newtheorem{exmp}[theorem]{Example}
\theoremstyle{definition}
\newtheorem{remark}[theorem]{Remark}
\newtheorem*{remark*}{Remark}
\setlist{noitemsep}
\begin{document}
\maketitle
\begin{abstract} 
	We generalize some properties of surface automorphisms of pseudo-Anosov type. 
	First, we generalize the Penner construction of a pseudo-Anosov homeomorphism and show that a symplectic automorphism which is constructed by our generalized Penner construction has an invariant Lagrangian branched submanifold and an invariant Lagrangian lamination, which are higher-dimensional generalizations of a train track and a geodesic lamination in the surface case.	
	Moreover, if a pair consisting of a symplectic automorphism $\psi$ and a Lagrangian branched surface $\mathcal{B}_{\psi}$ satisfies some assumptions, we prove that there is an invariant Lagrangian lamination $\mathcal{L}$ which is a higher-dimensional generalization of a geodesic lamination.
\end{abstract}

\section{Introduction}

By the Nielsen-Thurston classification of surface diffeomorphisms, an automorphism $\psi :S \stackrel{\sim}{\to} S$ of a compact oriented surface $S$ is of one of three types: periodic, reducible or pseudo-Anosov \cite{MR964685}, \cite{MR956596}.
A generic element of the mapping class group of $S$ is of pseudo-Anosov type.

Let us assume that $\psi$ is of pseudo-Anosov type.
For any closed curve $C \subset S$, it is known that there is a sequence $\{L_m\}_{m \in \mathbb{N}}$ of closed geodesics such that $L_m$ is isotopic to $\psi^m(C)$ for all $m \in \mathbb{N}$, and $\{L_m\}_{m \in \mathbb{N}}$, as a sequence of closed subsets, converges to a closed subset $\mathcal{L}$. 
Moreover, $\mathcal{L}$ is a geodesic lamination. 
The definitions of a lamination, a geodesic lamination and a Lagrangian lamination are the following: 
\begin{definition}
	$\mbox{}$
	\begin{enumerate}
		\item  
		A {\em $k$-dimensional lamination} on an $n$-dimensional manifold $M$ is a decomposition of a closed subset of $M$ into $k$-dimensional submanifolds called {\em leaves} so that $M$ is covered by charts of the form $I^k \times I^{n-k}$ where a leaf passing through a chart is a slice of the form $I^k \times \{pt\}$.
		\item 
		A $1$-dimensional lamination $\mathcal{L}$ on a Riemannian 2-manifold $(S,g)$ is a {\em geodesic lamination} if every leaf of $\mathcal{L}$ is geodesic.
		\item
		A $n$-dimensional lamination $\mathcal{L}$ on a symplectic manifold $(M^{2n},\omega)$ is a {\em Lagrangian lamination} if every leaf of $\mathcal{L}$ is a Lagrangian submanifold.  
	\end{enumerate}
\end{definition}
For more details, we refer the reader to \cite[Chapter 15]{MR2850125}.

In \cite{MR3289326}, Dimitrov, Haiden, Katzarkov, and Kontsevich defined the notion of a {\em pseudo-Anosov functor} of a category.
A pseudo-Anosov map $\psi$ on a compact oriented surface $S$ induces a functor, also called $\psi$, on the derived Fukaya category $D^{\pi}Fuk(S,\omega)$, where $\omega$ is an area form of $S$.
In \cite{MR3289326}, the authors showed that $\psi$ is a pseudo-Anosov functor.

In \cite[Section 4]{MR3289326}, the authors listed a number of open questions. 
One of them is to find a symplectic automorphism $\psi$ on a symplectic manifold $M$ of dimension greater than 2 which has invariant transversal stable/unstable Lagrangian measured foliations. 
A slightly weaker version of the question is to define a symplectic automorphism $\psi$ with invariant stable/unstable Lagrangian laminations.

The goal of the present paper is to prove Theorems \ref{branched surface thm}--\ref{thm Lagrangian floer homology}, which answer the latter question.

\begin{theorem}
\label{branched surface thm}
Let $M$ be a symplectic manifold and let $\psi : M \stackrel{\sim}{\to} M$ be a symplectic automorphism of generalized Penner type. Then, there exists a Lagrangian branched submanifold $\mathcal{B}_{\psi}$ such that if $L$ is a Lagrangian submanifold which is carried (resp.\ weakly carried) by $\mathcal{B}_{\psi}$, then $\psi^m(L)$ is carried (resp.\ weakly carried) by $\mathcal{B}_{\psi}$ for all $m \in \mathbb{N}$. 
\end{theorem}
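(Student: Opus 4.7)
The plan is to mirror the classical Penner construction on surfaces. Recall that if $\psi$ is Penner's pseudo-Anosov built from a positive Dehn multitwist along one filling multicurve and a negative multitwist along a transverse filling multicurve, the invariant train track is obtained by smoothing each intersection in the direction dictated by the twist convention. Here I would take as input the two families $\{L_i^+\}$ and $\{L_j^-\}$ of Lagrangians whose generalized Dehn twists compose to $\psi$, and construct $\mathcal{B}_\psi$ by joining the $L_i^\pm$ along a codimension-one Lagrangian branch locus near each clean intersection.

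First, I would construct $\mathcal{B}_\psi$ locally. Near a mutual intersection of some $L_i^+$ and $L_j^-$, pick a Darboux chart in which the two Lagrangians appear as the standard real and imaginary subspaces of $\mathbb{C}^n$. Replace their union in this chart by a Lagrangian branched model with two smooth sheets meeting along a branch submanifold of one lower dimension, chosen so that the ``resolution'' agrees with the one a positive (resp.\ negative) generalized Dehn twist would induce. Gluing these local models to the $L_i^\pm$ outside the intersection locus produces the global Lagrangian branched submanifold $\mathcal{B}_\psi$.

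Next, I would establish the key single-twist invariance: for a Lagrangian $L$ carried by $\mathcal{B}_\psi$, the image $\tau_{L_i^+}(L)$ is also carried, and likewise $\tau_{L_j^-}^{-1}(L)$. Since a generalized Dehn twist is supported in a Weinstein neighborhood of its defining Lagrangian, this is a local calculation in a cotangent bundle: the sheets of $L$ passing through this neighborhood are graph-like over the twist Lagrangian, and the twist sends them back into the branched model, with the choice of branching convention matching the sign of the twist. Passing to $C^0$-limits would handle the weakly carried version. Composing across the factorization of $\psi$ gives the claim for $m=1$, and induction on $m$ extends it to every $m \in \mathbb{N}$.

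The main obstacle is this local invariance step. Making the branching convention rigid enough that a generalized Dehn twist along one sheet returns nearby sheets into the allowed branched configuration, without creating forbidden ``bigons'' that would break the carrying relation, is the place where the filling and transversality hypotheses hidden in the definition of ``generalized Penner type'' must genuinely be used. In the surface case this reduces to an elementary smoothing rule, but in higher dimensions one has to control the twist simultaneously on several sheets with different tangent planes and check compatibility at the branch locus; verifying this compatibility, and identifying the precise axioms on $\psi$ under which it holds, is where the main technical work of the proof would live.
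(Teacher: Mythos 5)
Your construction of $\mathcal{B}_\psi$ by resolving the intersections of the positive and negative Lagrangians is the same idea as the paper's (the resolution is the Lagrangian surgery $\beta_j\#_p\alpha_i$ together with necks $N_p$ and one residual disk $D_p^{\pm}$ at each plumbing point), but your key step --- single-twist invariance of a fixed $\mathcal{B}_\psi$ --- is where the argument breaks. In this higher-dimensional setting the branched submanifold must choose, at each plumbing point $p$, exactly one of the two disks $D_p^+$ or $D_p^-$, and applying $\tau_k$ forces the choice at every $p\in\alpha_k$ to become $D_p^+$ (while $\sigma_j^{-1}$ forces $D_p^-$). Hence a single twist does \emph{not} carry $\mathcal{B}_\psi$ back into itself; it carries it into a generally \emph{different} branched submanifold. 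The paper handles this by introducing the finite set $\mathbb{B}$ of all $2^N$ possible disk choices, proving (Lemma 3.9/3.10) that each twist induces a map $F_{\delta}:\mathbb{B}\to\mathbb{B}$ with $\delta(L)$ carried by $F_{\delta}(\mathcal{B})$ whenever $L$ is carried by $\mathcal{B}$, and then observing that the composite $F_\psi=F_{\delta_1}\circ\cdots\circ F_{\delta_l}$ is a \emph{constant} map whose value $\mathcal{B}_\psi$ is determined by the sign of the last twist at each plumbing point --- this is exactly where the hypothesis that every sphere appears in the word for $\psi$ is used. Your induction on $m$ via individual twists would fail at the first twist whose sign disagrees with the disk choice of $\mathcal{B}_\psi$ at some plumbing point; the correct induction is on powers of the full composite.

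A second, smaller gap: you suggest that passing to $C^0$-limits handles the ``weakly carried'' case, but the non-transversality is not a limiting artifact --- it is unavoidable already for $\psi(\mathcal{B}_\psi)$ itself. The image $\tau_k(N_p)$ of a neck wraps around $\alpha_k$ and acquires a singular component of real blow-up type over the antipodal point of $p$ (item (iv) in the paper's Lemma 3.9), which is precisely why the paper's notion of ``carried by'' permits countably many singular values of this specific local model. Any complete proof has to identify and control this singularity type rather than hope transversality can be restored.
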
 
In Sections 2 and 3, we will explain the terminology that appears in the statement of Theorem \ref{branched surface thm}, i.e., a symplectic automorphism of generalized Penner type, a Lagrangian branched submanifold, and the notion of ``carried by''.

We would like to remark that Theorem \ref{branched surface thm} is for $\psi$ of generalized Penner type.
However, there would be a generalized version of Theorem \ref{branched surface thm}, which we do not prove in the current paper.

\begin{theorem}
\label{lamination thm}
Let $M$ be a symplectic manifold and let $\psi: M \stackrel{\sim}{\to} M$ be a symplectic automorphism of generalized Penner type.
Then, there is a Lagrangian lamination $\mathcal{L}$ such that
if $L$ is a Lagrangian submanifold of $M$ which is carried by $\mathcal{B}_{\psi}$, then there is a Lagrangian submanifold $L_m$ for all $m \in \mathbb{N}$, which is Hamiltonian isotopic to $\psi^m(L)$ and converges to $\mathcal{L}$ as closed sets as $m \to \infty$.
\end{theorem}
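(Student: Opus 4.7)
The plan is to reduce the statement to a Perron-Frobenius argument on the combinatorial data encoded in $\mathcal{B}_{\psi}$, then translate the resulting eigenvector back into a geometric Lagrangian lamination and arrange $\psi^{m}(L)$ to converge to it via Hamiltonian isotopy. By Theorem \ref{branched surface thm} every $\psi^{m}(L)$ is again carried by $\mathcal{B}_{\psi}$, so the entire sequence lives in (a neighborhood of) a single fixed branched submanifold; the challenge is to show that it geometrically concentrates onto one closed Lagrangian-laminated subset.

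First I would formalize the carrying relation as a weight assignment. Fix a finite decomposition of $\mathcal{B}_{\psi}$ into ``sectors'' (the higher-dimensional analogues of the edges of a train track, to be defined in Sections 2 and 3) and to each Lagrangian $L'$ carried by $\mathcal{B}_{\psi}$ associate the non-negative integer vector $w(L') \in \mathbb{Z}_{\geq 0}^{N}$ whose $i$-th entry records how many sheets of $L'$ project into the $i$-th sector. These vectors satisfy switch conditions at the branch locus, cutting out a rational polyhedral cone $\mathcal{C} \subset \mathbb{R}^{N}$. Theorem \ref{branched surface thm} then promotes the set-theoretic map $L' \mapsto \psi(L')$ to a non-negative linear map $A : \mathbb{R}^{N} \to \mathbb{R}^{N}$ preserving $\mathcal{C}$.

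Next I would invoke the generalized Penner hypothesis to verify that $A$ (or some fixed power of it) is Perron-Frobenius. In Penner's original $2$-dimensional construction the analogous incidence matrix is a product of elementary non-negative matrices, one for each generating Dehn twist, and irreducibility and aperiodicity follow from the assumption that every generator appears at least once; I expect the same mechanism to apply here because the generalized Penner hypothesis imposes the analogous fullness condition on the filling family of Lagrangian spheres. Perron-Frobenius then yields a simple dominant eigenvalue $\lambda > 1$ and a strictly positive eigenvector $v_{\infty} \in \mathrm{int}(\mathcal{C})$ with $\lambda^{-m} A^{m} w(L) \to c(L)\, v_{\infty}$ as $m \to \infty$, where $c(L) > 0$. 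The candidate lamination $\mathcal{L}$ is then the support of the ``real-weighted measured branched submanifold'' $(\mathcal{B}_{\psi}, v_{\infty})$: one fills $\mathcal{B}_{\psi}$ with parallel Lagrangian sheets distributed transversally according to $v_{\infty}$, takes the closure, and inherits a local product structure from the Weinstein charts of $\mathcal{B}_{\psi}$.

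For the final step I would construct $L_{m}$ by a Hamiltonian isotopy that redistributes $\psi^{m}(L)$ evenly within each sector, with sheet multiplicity proportional to the normalized weight $\lambda^{-m} A^{m} w(L)$; such an isotopy is available because inside a single sector the carrying sheets can be arranged as parallel Lagrangians inside a Weinstein tubular neighborhood of a model leaf. Hausdorff convergence $L_{m} \to \mathcal{L}$ then follows from the convergence of the normalized weights together with this equidistribution. The main obstacle I foresee is precisely the verification that the limit set $\mathcal{L}$ is a genuine Lagrangian lamination in the sense of Definition 1.1 --- that the limiting sheets fit together with the required $I^{n} \times I^{n}$ product charts and remain Lagrangian --- and that the rearranging Hamiltonian isotopies can be chosen coherently enough to control, rather than smear, the Hausdorff distance to $\mathcal{L}$ uniformly in $m$. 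Away from the branch locus both issues reduce to Weinstein's neighborhood theorem; near the branch locus one needs a careful analysis of how parallel sheets bifurcate between adjacent sectors, which is the higher-dimensional analogue of the classical surface-case argument that train-track-carried curves Hausdorff-converge to a geodesic lamination.
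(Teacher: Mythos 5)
There is a genuine gap, and it sits exactly where the paper departs from the surface case. Your plan reduces the carrying data of $\psi^m(L)$ to a non-negative integer weight vector (sheet counts per sector) and then runs Perron--Frobenius. In dimension $2$ this works because a fiber of the fibered neighborhood is an interval, so the intersection $L' \cap F_p$ is determined up to fiber isotopy by its cardinality. In higher dimensions the fibers are disks $\mathbb{D}^n$, and the relevant invariant of $L'$ over the boundary of a singular disk is a braid, i.e.\ a map $S^{n-1} \to \operatorname{Conf}_l(\mathbb{D}^n)$ subject to a Lagrangian condition --- not just the integer $l$. Two carried Lagrangians with identical sheet counts need not be fiber isotopic, and the Dehn twists genuinely braid the sheets (this is what the maps of ``first and second singular type'' in the paper record). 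The paper states this obstruction explicitly in Remark \ref{rmk non-linear algebra}: the sets $\tilde{Br}_{\partial S_i}$ carry no module structure, so one cannot do linear algebra on the transition ``matrices''. Consequently there is no linear map $A$ on $\mathbb{R}^N$ whose iteration captures the dynamics, and the dominant eigenvector $v_{\infty}$ you propose does not exist as the relevant limiting object. A secondary but related problem: the sheet count is not even well defined sector by sector, because $\pi : \psi^m(L) \to \mathcal{B}_{\psi}^*$ necessarily acquires singular values (the real blow-up components), so the leaves of the limit are not ``parallel sheets transverse to the fibers'' near those loci.

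What replaces Perron--Frobenius in the paper is a contraction argument: after composing with auxiliary Hamiltonian isotopies, each entry of the iterated transition matrix is a composition of maps whose images are solid tori of exponentially shrinking radius, so $B_{i,\infty} = \cap_m \psi_H^m(N(\mathcal{B}_{\psi})) \cap \pi^{-1}(\partial S_i)$ exists as a nested intersection (a Cantor-type infinite braid), independently of any spectral data. The leaves of $\mathcal{L}$ are then built strand by strand: each strand of $B_{i,\infty}$ is an infinite sequence of maps, strands containing a trivial-type map bound explicit Lagrangian disks, the remaining strands are handled by an approximation argument (Lemma \ref{lem4}), and the lamination is extended over the regular disks by solving for exact sections with prescribed boundary and disjoint differentials (Lemmas \ref{lem5} and \ref{lem6}). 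Your proposal defers all of this to ``the main obstacle I foresee'', but that obstacle is the proof; the weight-vector reduction that precedes it does not survive in dimension $\geq 4$.
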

We will also prove the following generalization of Theorem \ref{lamination thm}

\begin{theorem}
	\label{generalized theorem}
	Let $\psi:M \stackrel{\sim}{\to} M$ be a symplectic automorphism and let $\mathcal{B}_{\psi}$ be a Lagrangian branched submanifold such that $\psi(\mathcal{B}_{\psi})$ is carried by $\mathcal{B}_{\psi}$.
	Moreover, if the associated branched manifold $\mathcal{B}_{\psi}$ admits a decomposition into singular and regular disks, then there is a Lagrangian lamination $\mathcal{L}$ such that
	if $L$ is a Lagrangian submanifold of $M$ which is carried by $\mathcal{B}_{\psi}$, then there is a Lagrangian submanifold $L_m$ for all $m \in \mathbb{N}$, which is Hamiltonian isotopic to $\psi^m(L)$ and converges to $\mathcal{L}$ as closed sets as $m \to \infty$.
\end{theorem}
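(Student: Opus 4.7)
The plan is to imitate the surface argument in which iterates of a curve carried by a train track converge to a geodesic lamination, with the decomposition of $\mathcal{B}_{\psi}$ into regular and singular disks playing the role of the combinatorial skeleton of the train track. First I would equip $\mathcal{B}_{\psi}$ with a \emph{fibered tubular neighborhood} $N_0$: over a regular disk $D$ this is a product chart $D \times T$ with a transverse disk $T$, in which the Lagrangian sheets of $\mathcal{B}_{\psi}$ appear as a finite family of slices $D \times \{p_i\}$; over a singular disk the sheets branch along a codimension-one locus of the base. Because $\psi(\mathcal{B}_{\psi})$ is carried by $\mathcal{B}_{\psi}$, every sheet of $\psi(\mathcal{B}_{\psi})$ lies in $N_0$ as a graph in $T$. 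Let $N_1$ be the fibered neighborhood of $\psi(\mathcal{B}_{\psi})$ inside $N_0$ and, inductively, $N_m \subset N_{m-1}$ the fibered neighborhood of $\psi^m(\mathcal{B}_{\psi})$. Iterated carrying packs more and more sheets of $\psi^m(\mathcal{B}_{\psi})$ into each fiber of $N_0$, forcing the transverse widths of $N_m$ to shrink as $m \to \infty$, and I would set
\[\mathcal{L} := \bigcap_{m \geq 0} \overline{N_m}.\]

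To check that $\mathcal{L}$ is a Lagrangian lamination I would argue chart by chart using the disk decomposition. Over a regular disk, the trace of $\overline{N_m}$ on a transverse fiber is a decreasing sequence of finite unions of subdisks whose limit is a closed subset $K \subset T$, and sliding $K$ over $D$ gives a chart of the product form required by the definition of a lamination. Over a singular disk it is enough to verify that the branches of $\mathcal{B}_{\psi}$ are eventually separated by distinct fibers of $\overline{N_m}$ for some $m$, which again follows from the iterated carrying hypothesis. Each leaf is a $C^1$-limit of Lagrangian sheets of $\psi^m(\mathcal{B}_{\psi})$, hence is itself Lagrangian.

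For the approximation statement, let $L$ be carried by $\mathcal{B}_{\psi}$; then $\psi^m(L)$ is carried by $\psi^m(\mathcal{B}_{\psi})$ and so lies in $N_m$. Within a Weinstein tubular model of a leaf of $\mathcal{L}$ meeting $\psi^m(L)$, the Lagrangian $\psi^m(L)$ is described locally by a closed $1$-form, and perturbing that form by an exact $1$-form supported in the chart produces a Lagrangian $L_m$ Hamiltonian isotopic to $\psi^m(L)$ and $\varepsilon_m$-close to $\mathcal{L}$ in Hausdorff distance, with $\varepsilon_m \to 0$ as $m \to \infty$.

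The main obstacle is this last step: $\mathcal{L}$ is not a smooth Lagrangian and admits no global Weinstein neighborhood, so the closed $1$-form description is only local, and the cohomology class of the form on an individual leaf need not vanish a priori. I plan to handle this inductively over the disks of the decomposition: on each regular disk the relevant $1$-form is trivially exact by a Poincar\'e-lemma argument on a contractible chart, while at singular disks the local Hamiltonians are glued via a partition of unity subordinate to the branched structure. The cohomological consistency required for this gluing should follow from the hypothesis that $L$ is carried (not merely weakly carried) by $\mathcal{B}_{\psi}$, which forces the sheets of $L$ to match up with prescribed transition data at the branch locus and so cancels the potentially non-trivial periods of the local $1$-forms.
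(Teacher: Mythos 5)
There is a genuine gap, and it sits exactly where the paper has to work hardest. Your construction $\mathcal{L} := \bigcap_m \overline{N_m}$ presupposes that ``iterated carrying packs more and more sheets into each fiber, forcing the transverse widths of $N_m$ to shrink.'' This is not a consequence of the hypothesis that $\psi(\mathcal{B}_{\psi})$ is carried by $\mathcal{B}_{\psi}$: carrying alone gives nested neighborhoods but no contraction (consider $\psi$ acting trivially near part of $\mathcal{B}_{\psi}$), so the intersection can fail to be a lamination. In the paper the shrinking is \emph{proved}, not assumed: the effect of $\psi$ on the braids $b(\psi^m(L),S_i)$ over the boundaries of singular disks is encoded by a matrix of maps of scaling/singular/trivial type with explicit contraction constants $r_i<1$ (Remarks \ref{rmk radius of solid torus} and \ref{rmk radius of solid torus 2}), and the radii of the solid tori $B_{i,m}$ decrease exponentially. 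Relatedly, your treatment of the singular disks (``verify that the branches are eventually separated by distinct fibers'') misses the actual difficulty there: over a singular disk the iterates $\psi^m(L)$ have singular components of real blow-up type, the limit set is not a union of graphs over the base, and one must show that each infinite strand of the limit braid $B_{i,\infty}$ bounds a Lagrangian disk in $\overline{\pi^{-1}(\mathring S_i)}$. This is the content of Lemma \ref{lem4}, and the paper explicitly notes that its proof does \emph{not} carry over to the generalized setting (there may not be ``enough maps of trivial type''); the workaround is to match the ``shape'' of each strand with a strand produced by an auxiliary automorphism of generalized Penner type and transplant the filling Lagrangian disk by scaling and translation. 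Your proposal contains no substitute for this step.

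Two further points. First, ``each leaf is a $C^1$-limit of Lagrangian sheets, hence Lagrangian'' needs an argument: set-theoretic nesting gives only Hausdorff convergence, and the paper obtains $C^1$ control on regular disks by writing leaves as graphs $d\phi_{i,m}$ with $d_H(d\phi_{i_m,m},d\phi_{i_n,n})<4r^{\max(m,n)}$ (Lemmas \ref{lem5} and \ref{lem6}). Second, for the approximation statement the paper does not perturb $\psi^m(L)$ after the fact; it builds $L_m=\psi_H^m(L)$ where each Dehn twist (or, in the generalized case, each application of $\psi$) is composed with an explicit Hamiltonian isotopy, so $L_m$ is Hamiltonian isotopic to $\psi^m(L)$ by construction and lies in the shrinking neighborhoods. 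Your alternative route through local Weinstein models runs into the cohomological gluing problem you yourself identify, and the claim that carrying ``cancels the potentially non-trivial periods'' is not justified; this would need to be proved, not asserted.
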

The associated branched manifold and singular/regular disks will be defined in Sections 3 and 4.

\begin{theorem}
	\label{thm Lagrangian floer homology}
	Let $M$ be a plumbing space of Penner type and let $\eta : M \stackrel{\sim}{\to} M$ be the involution associated to $M$. 
	Let assume that a transversal pair $L_1, L_2 \subset M$ of Lagrangian submanifolds satisfies the following:
	\begin{enumerate}
		\item $\eta(L_i) = L_i$ for $i = 0, 1$.
		\item Let $\tilde{L}_i = L_i \cap M_i$. Then, $\tilde{L}_i$ is a Lagrangian submanifold of $\tilde{M}$ such that $\tilde{L}_0$ and $\tilde{L}_1$ are not isotopic to each other.
		\item $L_0 \cap L_1 = \tilde{L}_0 \cap \tilde{L}_1$,
		\item $L_0$ and $L_1$ are not isotopic to each other. 
	\end{enumerate} 
	Then, 
	\begin{gather*}
	\operatorname{dim} HF^0(L_1,L_2) + \operatorname{dim}HF^1(L_1,L_2) = i(\tilde{L}_1, \tilde{L}_2),
	\end{gather*}
	where $HF^k(L_1,L_2)$ denotes $\mathbb{Z}/2$--graded Lagrangian Floer homology over the Novikov ring of characteristic 2 and $i(\tilde{L}_1,\tilde{L}_2)$ denotes the geometric intersection number of $\tilde{L}_1$ and $\tilde{L}_2$ in the fixed surface $\tilde{M}$.
\end{theorem}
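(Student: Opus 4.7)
The plan is to exploit the involution $\eta$ to reduce the Floer homology $HF^*(L_0, L_1)$ to a count of holomorphic bigons in the fixed surface $\tilde{M}$, and then to invoke the classical computation of Floer homology for essential curves on a surface. I would begin by choosing an $\eta$-equivariant, $\omega$-compatible almost complex structure $J$ (and, if needed for regularity of the asymptotic Hamiltonian problem, an $\eta$-equivariant Hamiltonian perturbation). Since $L_0 \cap L_1 \subset \tilde{M}$ by hypothesis (3) and $\eta$ fixes $\tilde{M}$ pointwise, every generator of the Floer complex is $\eta$-invariant, and post-composition $u \mapsto \eta \circ u$ defines an involution on each moduli space $\mathcal{M}(p,q;L_0,L_1;J)$ of Floer trajectories.

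The key observation is that a Floer strip $u\colon \mathbb{R}\times[0,1] \to M$ satisfies $\eta \circ u = u$ if and only if its image lies in $\tilde{M}$, in which case $u$ is a holomorphic bigon in the surface $\tilde{M}$ with boundary on $\tilde{L}_0$ and $\tilde{L}_1$. Strips with $\eta\circ u \neq u$ occur in free $\mathbb{Z}/2$-orbits of size two and so contribute zero to the Floer differential over a Novikov ring of characteristic $2$. Hence $CF^*(L_0,L_1)$ is identified, as a $\mathbb{Z}/2$-graded chain complex, with the surface Floer complex $CF^*(\tilde{L}_0,\tilde{L}_1)$ computed inside $\tilde{M}$. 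Because $\tilde{L}_0$ and $\tilde{L}_1$ are non-isotopic essential simple closed curves, after a Hamiltonian isotopy to minimal position there are no immersed bigons at all, the surface Floer differential vanishes, and the total rank of $HF^*(\tilde{L}_0,\tilde{L}_1)$ is exactly the geometric intersection number $i(\tilde{L}_0,\tilde{L}_1)$. Combining these two steps gives the stated equality. The non-isotopy condition (4) on the $L_i$ and condition (2) on the $\tilde{L}_i$ are used to rule out the degenerate case in which the geometric intersection number collapses under a further isotopy, which would force $HF^*$ to vanish in a way incompatible with counting points of $L_0\cap L_1$.

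The main obstacle is equivariant transversality: an $\eta$-equivariant $J$ is a constrained choice, so one must verify that the relevant moduli spaces are cut out transversely within the equivariant problem. I would handle this by splitting the linearized Cauchy--Riemann operator at a fixed strip into its $\eta$-invariant and $\eta$-anti-invariant parts, so that the invariant part is the intrinsic surface operator (perturbable generically inside $\tilde{M}$) while the anti-invariant part controls the normal directions and can be regularized by an equivariant perturbation, with index mod $2$ computed from the normal bundle splitting of $TM|_{\tilde{M}}$ under $d\eta$. A secondary technical point is verifying that the pairing of non-fixed strips under $\eta$ is compatible with the Novikov weights, which is automatic since $\eta$ is symplectic and therefore preserves the symplectic area of each strip, and that disk or sphere bubbling can be excluded or absorbed by the same equivariant cancellation argument applied to lower-dimensional strata of the compactified moduli space.
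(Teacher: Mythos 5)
Your proposal follows essentially the same route as the paper: identify the generators via hypothesis (3), use the $\eta$-equivariance to cancel non-fixed holomorphic strips in pairs over the characteristic-2 Novikov coefficients so that the differential reduces to the surface Floer differential in $\tilde{M}$, and then invoke the known surface computation (the paper cites Lemma 2.18 of \cite{MR3289326}) equating the total rank with the geometric intersection number of non-isotopic curves. Your additional discussion of equivariant transversality addresses a point the paper's proof passes over silently, but it does not change the argument's structure.
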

In Section \ref{section pseudo-Anosov functors}, we will explain the terminology that appears in the statement of Theorem \ref{thm Lagrangian floer homology}, i.e., a plumbing space $M$ of Penner type, the involution $\eta$ associated to $M$, and the fixed surface $\tilde{M}$ of $M$. 

This paper consists of 5 sections.
In Section 2, we review plumbing spaces and generalized Dehn twists. 
We will prove Theorem \ref{branched surface thm} in Section 3 and Theorems \ref{lamination thm} and \ref{generalized theorem} in Section 4.
In Section \ref{section pseudo-Anosov functors}, we will prove Theorem \ref{thm Lagrangian floer homology}.

\section{Preliminaries}

In this section, we will review plumbings of cotangent bundles and generalized Dehn twists, partly to establish notation.

\subsection{Plumbing spaces}

Let $\alpha$ and $\beta$ be oriented spheres $S^n$. 
We describe how to plumb $T^*\alpha$ and $T^*\beta$ at $p \in \alpha$ and $q \in \beta$. 
Let $U \subset \alpha$ and $V \subset \beta$ be small disk neighborhoods of $p$ and $q$.
Then, we identify $T^*U$ and $T^*V$ so that the base $U$ (resp.\ $V$) of $T^*U$ (resp.\ $T^*V$) is identified with a fiber of $T^*V$ (resp.\ $T^*U$).  

To do this rigorously, we fix coordinate charts $\psi_1: U \to \mathbb{R}^n$ and $\psi_2:V \to \mathbb{R}^n$.
Then, we obtain a compositions of symplectomorphisms
\begin{align*}
T^*U \xrightarrow{({\psi}_1^*)^{-1}} T^*\mathbb{R}^n \simeq \mathbb{R}^{2n} \xrightarrow{f} \mathbb{R}^{2n} \simeq T^*\mathbb{R}^n \xrightarrow{\psi_2^*} T^*V,
\end{align*}
where $f(x_1, \cdots, x_n, y_1, \cdots, y_n) = (y_1, \cdots, y_n, -x_1, \cdots, -x_n)$. 

A plumbing space $P(\alpha, \beta)$ of $T^*\alpha$ and $T^*\beta$ is defined by 
$T^*\alpha \sqcup T^*\beta / \sim$, where $x \sim (\psi^*_2 \circ f \circ \psi^{*-1}_1)(x)$ for all $x \in T^*U$.
Since $\psi^*_2 \circ f \circ \psi^{*-1}_1$ is a symplectomorphism, $P(\alpha,\beta)$ has a natural symplectic structure induced by the standard symplectic structures of cotangent bundles.

Since the plumbing procedure is a local procedure, we can plumb a finite collection of cotangent bundles of the same dimension at finitely many points. 
For convenience, we plumb cotangent bundles of oriented manifolds.

Note that we can replace $f$ by 
$$g(x_1,\cdots,x_n,y_1,\cdots,y_n) = (-y_1, y_2, \cdots,y_n,x_1, -x_2, \cdots, -x_n).$$
If we plumb $T^*\alpha$ and $T^*\beta$ at one point using $g$, this plumbing space is symplectomorphic to the previous plumbing space $P(\alpha,\beta)$, which is plumbed using $f$. 
However, if we plumb at more than one point, then by replacing $f$ with $g$ at a plumbing point, the plumbing space will change.

\begin{definition}
	\label{def plumbing space}
	Let $\alpha_1, \cdots, \alpha_m$ be oriented manifolds of dimension $n$.
	\begin{enumerate}
		\item A {\em plumbing data} is a collection of pairs of non-negative integers $(a_{i,j}, b_{i,j})$ for all $1 \leq i \leq j \leq m$ and collections of distinct points 
		\begin{gather*}
		\{p^{i,j}_k \in \alpha_i \hspace{0.2em} | \hspace{0.2em} 1 \leq i \leq j \leq m, \hspace{0.2em} 1 \leq k \leq a_{i,j} + b_{i,j} \} \hspace{0.5em} \text{and} \\
		\{ q^{i,j}_k \in \alpha_j \hspace{0.2em} | \hspace{0.2em} 1 \leq i \leq j \leq m, \hspace{0.2em} 1 \leq k \leq a_{i,j} + b_{i,j}  \}.
		\end{gather*}
		\item A {\em plumbing space} $P(\alpha_1, \cdots, \alpha_m)$, with the given plumbing data, is given by 
		$$P(\alpha_1, \cdots, \alpha_m) = T^*\alpha_1 \sqcup \cdots \sqcup T^*\alpha_m / \sim,$$
		where the equivalence relation $\sim$ is defined as follows:
		First, choose small disk neighborhoods $U^{i,j}_k \subset \alpha_i$ of $p^{i,j}_k$ and $V^{i,j}_k \subset \alpha_j$ of $q^{i,j}_k$ and orientation-preserving coordinate charts $\psi^{i,j}_k : U^{i,j}_k \stackrel{\sim}{\to} \mathbb{R}^n$ and $\phi^{i,j}_k:V^{i,j}_k \stackrel{\sim}{\to} \mathbb{R}^n$. 
		Then for all $ x \in T^*U^{i,j}_k$, 
		\begin{gather*}
		x \sim (\phi^{i,j*}_k \circ f \circ (\psi^{i,j*}_k)^{-1})(x) \hspace{0.5em}  \text{if} \hspace{0.5em} 1 \leq k \leq a_{i,j}, \\
		x \sim (\phi^{i,j*}_k \circ g \circ (\psi^{i,j*}_k)^{-1})(x) \hspace{0.5em}  \text{if} \hspace{0.5em} a_{i,j} +1 \leq k \leq a_{i,j} + b_{i,j}.
		\end{gather*}
		\item A {\em plumbing point} is an identified point $p^{i,j}_k \sim q^{i,j}_k \in P(\alpha_1, \cdots, \alpha_m).$ 
	\end{enumerate}
\end{definition}
Figure \ref{figure examples of plumbing space} is examples of plumbing spaces.

If $\alpha_i$ is of dimension $n \geq 2$, then specific choices of plumbing points do not change the symplectic topology of $P(\alpha_1, \cdots, \alpha_m)$.

\begin{figure}[h]
	\centering
	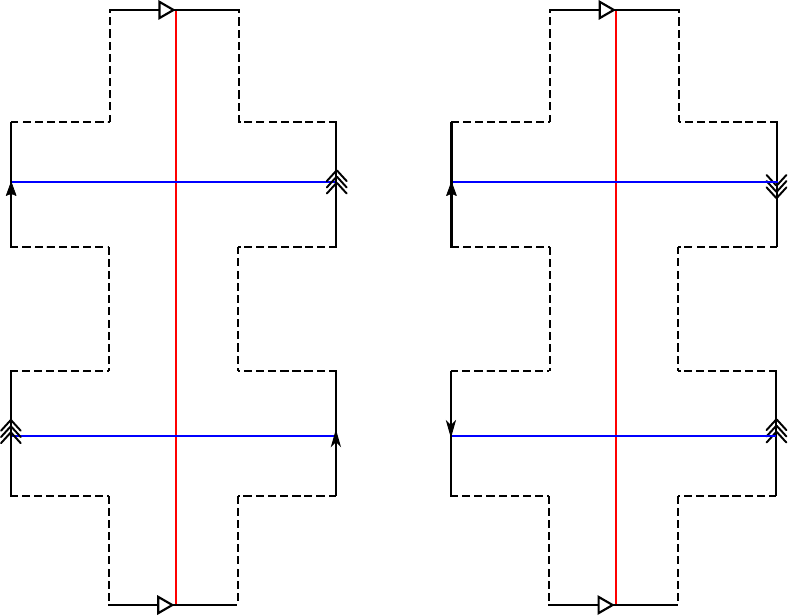
	\caption{$P(\alpha \simeq S^1, \beta \simeq S^1)$ with plumbing data $(2,0)$ (left) and $(1,1)$ (right).}
	\label{figure examples of plumbing space}
\end{figure}

\subsection{Generalized Dehn twist}

Let
\begin{gather*}
T^*S^n = \{(u;v) \in \mathbb{R}^{n+1} \times \mathbb{R}^{n+1} \mid \|u\| = 1, \left\langle u,v\right\rangle = 0 \}, \\
S^n = \{(u;0) \in T^*S^n \},
\end{gather*}
where $(u;v) \in \mathbb{R}^{n+1} \times \mathbb{R}^{n+1}$ and $<u,v>$ is the standard inner product of $u$ and $v$ in $\mathbb{R}^{n+1}$.
Moreover, let $0_k$ be the origin in $\mathbb{R}^k$.

We fix a Hamiltonian function $\mu(u;v) = \|v\|$ on $T^*S^n \setminus S^n$.
Then, $\mu$ induces a circle action on $T^*S^n \setminus S^n$ given by 
$$ \sigma(e^{it})(u;v) = \big(\cos (t) u + \sin (t) \frac{v}{\|v\|}; \cos (t) v - \sin (t) \|v\| u  \big).$$
Let $r : [0,\infty) \to \mathbb{R}$ be a smooth decreasing function such that $r(0) = \pi$ and $r(t) = 0$ for all $t \geq \epsilon$ for a small positive number $\epsilon$.
If $\omega_0$ is the standard symplectic form of $T^*S^n$,
we define a symplectic automorphism $\tau : (T^*S^n, \omega_0) \stackrel{\sim}{\to} (T^*S^n, \omega_0) $ as follows
\begin{align}
\label{eqn generalized dehn twist definition}
\tau(u;v) = \left\{\begin{matrix}
\sigma(e^{i r(\mu{(u;v)})})(u;v) \hspace{5pt} &\text{if} \hspace{5pt} v \neq 0_{n+1},\\ 
(-u;0_{n+1})  \hspace{5pt} &\text{if} \hspace{5pt} v = 0_{n+1}.
\end{matrix}\right.
\end{align}  

Let $(M^{2n}, \omega)$ be a symplectic manifold and let $L \simeq S^n$ be a Lagrangian sphere in $M$. 
By the Lagrangian neighborhood theorem \cite{MR0286137}, there is a neighborhood $N(L) \supset L$ and a symplectomorphism $\phi : T^*S^n \stackrel{\sim}{\rightarrow} N(L)$.
We define a generalized Dehn twist $\tau_L$ along $L$ as follows:
\begin{align}
\label{eqn definition of generalized Dehn along L}
\tau_L (x) = \left\{\begin{matrix}
(\phi \circ \tau \circ \phi^{-1}) (x) \hspace{5pt} &\text{if} \hspace{5pt} x \in N(L), \\
x \hspace{5pt} &\text{if} \hspace{5pt} x \notin N(L).
\end{matrix}\right.
\end{align} 
Note that the support of $\tau_L$ is contained in $N(L)$.
From now on, a generalized Dehn twist will just be called a Dehn twist. 

\begin{remark}
	\label{rmk specific dehn twists}
	In this paper, we will use two specific Dehn twists $\tau, \tilde{\tau} : T^*S^n \stackrel{\sim}{\to} T^*S^n$ which are defined by Equation \eqref{eqn generalized dehn twist definition} and two functions $r, \tilde{r} : [0,\infty) \to \mathbb{R}$.
	The function $r$ (resp.\ $\tilde{r}$) defining $\tau$ (resp.\ $\tilde{\tau}$) satisfies the above conditions in addition to $r(t) = \pi$ for all $t \leq \tfrac{\epsilon}{2}$ (resp.\ $\tilde{r}'(0) < 0)$.
	Two Dehn twists $\tau$ and $\tilde{\tau}$ are equivalent in the sense that $\tau \circ \tilde{\tau}^{-1}$ is a Hamiltonian isotopy.
\end{remark}

Dehn twists have been studied extensively by Seidel. 
For example, Seidel \cite{MR1743463} proved the following theorem.
\begin{theorem}
	\label{lagrangian surgery theorem}
	Let $\alpha$ be a Lagrangian sphere and $\beta$ be a Lagrangian submanifold of a symplectic manifold $M$. 
	If $\alpha$ and $\beta$ intersect transversally at only one point, $\beta \# \alpha $ is Lagrangian isotopic to $\tau_{\alpha}(\beta)$ where $\beta \# \alpha$ is a Lagrangian surgery of $\alpha$ and $\beta$.
\end{theorem}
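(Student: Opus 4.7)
The plan is to reduce the global statement to an explicit computation in $T^*S^n$, and then exhibit a Lagrangian isotopy between two concrete local models. By the Lagrangian neighborhood theorem, we can identify a Weinstein neighborhood $N(\alpha)$ symplectomorphically with a neighborhood of the zero section in $T^*S^n$, sending $\alpha$ to $S^n$. By construction $\tau_\alpha$ is supported in $N(\alpha)$, and the Lagrangian surgery $\beta\#\alpha$ is a local modification in an arbitrarily small ball around the unique intersection point $p$. So both $\tau_\alpha(\beta)$ and $\beta\#\alpha$ agree with $\beta$ outside $N(\alpha)$, and it suffices to construct a Lagrangian isotopy between their restrictions to $N(\alpha)$ that fixes $\beta$ near the boundary of $N(\alpha)$.

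Next I would put $\beta$ into a normal form near $p$. Since $\alpha$ and $\beta$ are transverse Lagrangian subspaces of $T_pM$, any two such pairs are linearly symplectomorphic, so up to a compactly supported Hamiltonian isotopy one may assume that in a small neighborhood of $p$ inside $T^*S^n$ the Lagrangian $\beta$ coincides with the cotangent fiber $F := T^*_qS^n$ over the point $q \in S^n$ corresponding to $p$. This reduces the claim to a statement purely about $(T^*S^n,F)$: the image $\tau(F)$ is Lagrangian isotopic, through Lagrangians agreeing with $F$ outside a compact set, to the surgery $F\#S^n$.

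For the model computation, use the explicit formula \eqref{eqn generalized dehn twist definition} for $\tau$. Parametrizing $F$ by $v \in T^*_qS^n$, the image $\tau(F)$ is cut out by $(q,v)\mapsto \sigma(e^{ir(\|v\|)})(q,v)$. For $\|v\|$ small we have $r(\|v\|)=\pi$, so that portion of $F$ is wrapped once around a geodesic through $q$ and attached at the antipode, while for $\|v\|\geq\epsilon$ the map is the identity. Recognizing this image as a handle attached to $F$ along $S^n$ near $q$, I would then construct a one-parameter family of profile functions $r_s$, $s\in[0,1]$, interpolating between the twist profile $r$ at $s=0$ and a profile that realizes the Polterovich/Lalonde--Sikorav handle model of $F\#S^n$ at $s=1$; the same formula with $r_s$ in place of $r$ produces a smooth family of Lagrangians, giving the desired Lagrangian isotopy.

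The principal obstacle is controlling the family at and across the zero section. The formula \eqref{eqn generalized dehn twist definition} is piecewise, with the $v=0$ case handled separately, and the geodesic rotation degenerates as $\|v\|\to 0$. To show that $\tau(F)$ is a smooth Lagrangian passing through the antipode of $q$, and that the interpolating family $\tau_{r_s}(F)$ remains a smooth Lagrangian throughout, one must verify smoothness at the origin by switching to an alternative parametrization near the antipodal point (e.g.\ using the Liouville flow, or by directly writing the image as a graph over a different Lagrangian). Once this smoothness check is in place, matching the tail behaviors of $r$ and the surgery profile so that the isotopy is compactly supported inside $N(\alpha)$ is routine.
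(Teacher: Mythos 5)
Your proposal is correct and follows essentially the same route as the paper: both localize to the Weinstein neighborhood $T^*S^n$ with $\beta$ identified near $p$ with the cotangent fiber, and both exploit the $S^{n-1}$-rotational symmetry to reduce the isotopy to a one-parameter family of planar profile curves --- the paper calls this ``spinning'' the two-dimensional slices $W_y$, which is the same computation as your interpolation of profile functions $r_s$. The only difference is that the paper contents itself with the special case $\beta \simeq S^n$ and $M = P(\alpha,\beta)$, where $\beta \cap T^*\alpha$ is exactly the fiber $D_p^-$ by construction, whereas you also spell out the normal-form step needed for a general $\beta$.
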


We prove Theorem \ref{lagrangian surgery theorem} in the special case that $\beta$ is also a sphere and $M = P(\alpha, \beta)$, as an illustration of the ``spinning'' procedure.
To define ``spinning'', we use the following notation.
Let $y \in S^{n-1} \subset \mathbb{R}^{n}$. 
Then, 
\begin{gather*}
\psi_y : T^*S^1 \simeq S^1 \times \mathbb{R}  \to T^*S^n, \\
(\theta, t) \mapsto (\cos \theta (0_n,1) + \sin \theta (y,0); t\cos \theta (y,0) - t \sin \theta (0_n,1) )
\end{gather*}
is a symplectic embedding. 
Let $W_y$ be the embedded symplectic surface $\psi_y(T^*S^1)$. 
\begin{definition}
	\label{def spinning}
	Given a curve $C$ in $T^*S^1$, its {\em spun image} $S(C)$ is $\cup_{y \in S^{n-1}} \psi_y(C)$. 
\end{definition}


\begin{proof} [Proof of Theorem \ref{lagrangian surgery theorem}]
	We use $T^*\alpha$ and $T^*\beta$ to indicate neighborhoods of $\alpha$ and $\beta$ inside $M = P(\alpha, \beta)$. 
	Let $p$ be the intersection point of $\alpha$ and $\beta$.
	Then, $T^*_p\alpha = \beta \cap T^*\alpha$. 
	The closure of $T^*_p\alpha$ is denoted by $D_p^-$; we use $D$ to indicate that this is a disk and the subscript $p$ means that $p$ is the center of $D_p^-$. 
	The meaning of the negative sign in $D_p^-$ will be explained in the next section. 
	Since $\tau_{\alpha}$ is supported on 
	$T^*\alpha$, 
	$$\tau_{\alpha}(\beta) = \tau_{\alpha}(\beta \cap T^*\alpha) \cup \tau_{\alpha}(\beta \setminus T^*\alpha) = \tau_{\alpha}(D_p^-) \cup (\beta \setminus T^*\alpha).$$

	There exists 
	$\phi : T^*S^n \stackrel{\sim}{\to} T^*\alpha$ 
	such that $\tau_{\alpha} = \phi \circ \tau \circ \phi^{-1}$. 
	Without loss of generality, $\phi(0_n,1;0_{n+1}) = p$ and 
	$$D_p^- = \phi(\{(0_n,1;ty,0) \hspace{0.2em} | \hspace{0.2em} t \in \mathbb{R}, \hspace{0.2em} y \in S^{n-1} \subset \mathbb{R}^n \}).$$
	Then,
	\begin{align*}
	(\phi \circ \tau_{\alpha} \circ \phi^{-1}) (D_p^-)& = (\phi \circ \tau)(\{(0_n,1;ty,0) \hspace{0.2em} | \hspace{0.2em} t \in \mathbb{R}, \hspace{0.2em} y \in S^{n-1} \subset \mathbb{R}^n \}) \\
	 &= \cup_{y \in S^{n-1}}\phi( \{\tau(0_n,1;ty,0) \hspace{0.2em} | \hspace{0.2em} t \in \mathbb{R} \}).
	\end{align*} 
	Thus, $\tau_{\alpha}(D_p^+)$ is given by spinning with respect to $p$ and $\phi$. 	
	Similarly, we can construct a Lagrangian isotopy connecting $\tau_{\alpha}(\beta)$ and $\beta \# \alpha$ by spinning. 
	This completes the proof.  
\end{proof}

\section{Lagrangian branched submanifolds}

In Section 3.1, we will define Lagrangian branched submanifolds.
In Section 3.2, we will introduce a construction of a fibered neighborhood of a Lagrangian branched submanifolds. 
In Section 3.3, we will defined the notion of ``carried by'' by using a fibered neighborhood.
In Section 3.4, we will introduce the generalized Penner construction.
Finally, we will give a proof of Theorem \ref{branched surface thm} in Section 3.5.

\subsection{Lagrangian branched submanifolds.}

Thurston \cite{MR1435975} used train tracks, which are 1-dimensional branched submanifolds of surfaces, and defined the notion of ``carried by a train track".
In this subsection, we generalize train tracks.  

The generalization of a train track is an $n$-dimensional branched submanifold of a $2n$-dimensional manifold. 
We define the $n$-dimensional branched submanifolds with local models, as Floyd and Oertel defined a branched surface in a 3-dimensional manifold in \cite{MR721458}, \cite{MR746535}.
For our definition, we need a smooth function $s: \mathbb{R} \to \mathbb{R}$ such that $s(t) = 0$ if $t \leq 0$ and $s(t)>0$ if $t>0$. 

\begin{definition}
	\label{def of branched submfd}
	Let $M^{2n}$ be a smooth manifold.
	\begin{enumerate}
		\item A subset $\mathcal{B} \subset M$ is an {\em $n$-dimensional branched submanifold} if for every $p \in \mathcal{B}$, there exists a chart $\phi_p:U_p \stackrel{\sim}{\to} \mathbb{R}^{2n}$ about $p$ such that $\phi_p(p) = 0$ and $\phi_p(\mathcal{B} \cap U_p)$ is a union of submanifolds $L_0, L_1, \cdots, L_k$ for some $k \in \{0, \cdots, n\}$, where 
		\begin{align*}
		L_i := \{(x_1, \cdots, x_n, s(x_1), s(x_2), \cdots, s(x_i), 0, \cdots, 0) \in \mathbb{R}^{2n} \hspace{0.2em} | \hspace{0.2em} x_j \in \mathbb{R}\}. 
		\end{align*}
		\item A {\em sector} of $\mathcal{B}$ is a connected component of the set of all points in $\mathcal{B}$ that are locally modeled by $L_0$, i.e., $k=0$.
		\item A {\em branch locus $Locus(\mathcal{B})$} of $\mathcal{B}$ is the complement of all the sectors. 
		\item Let $(M^{2n}, \omega)$ be a symplectic manifold.
		A subset $\mathcal{B} \subset M$ is a {\em Lagrangian branched submanifold} if for every $p \in \mathcal{B}$, there exists a Darboux chart $\phi_p:(U_p,\omega|_{U_p}) \stackrel{\sim}{\to} (\mathbb{R}^{2n},\omega_0)$ about $p$, satisfying the conditions of an $n$-dimensional branched submanifold. 
	\end{enumerate}
\end{definition}

\begin{remark}
	\label{rmk some facts for lagrangian branched submanifold}
	$\mbox{}$
	\begin{enumerate}
		\item At every point $p$ of a branched submanifold $\mathcal{B}$, the tangent plane $T_p\mathcal{B}$ is well-defined.
		Moreover, if $\mathcal{B}$ is Lagrangian, then $T_p\mathcal{B}$ is a Lagrangian subspace of $T_pM$.  
		\item A point on the branch locus is (a smooth version of) an arboreal singularity in the sense of Nadler \cite{MR3626601}. 
	\end{enumerate}
\end{remark}

\begin{exmp}
	\mbox{}
	\label{ex Lagrangian branched submanifold}
	\begin{enumerate}
		\item Every train track of a surface equipped with an area form is a Lagrangian branched submanifold.
		\item Let $(M,\omega)$ be a symplectic manifold and let $L_1$ and $L_2$ be two Lagrangian submanifold of $M$ such that 
		$$L_1 \pitchfork L_2, \hspace{0.2em} L_1 \cap L_2 = \{p\}.$$
		The Lagrangian surgery of $L_1$ and $L_2$ at $p$ will be denoted by $L_2 \#_p L_1$.
		Then, $L_2 \#_p L_1 \cup L_1$ and $L_2 \#_p L_1 \cup L_2$ are examples of Lagrangian branched submanifold. 
	\end{enumerate}
\end{exmp}

In Section 3.3, we will define the notion of {\em ``carried by''} which appears in Theorems \ref{branched surface thm} - \ref{generalized theorem}.
In order to define the notion of carried by, we will construct a fibered neighborhood first in Section 3.2.
\vskip0.2in

\subsection{Construction of fibered neighborhoods.}
Let $\mathcal{B}$ be a Lagrangian branched submanifold. 
A fibered neighborhood $N(\mathcal{B})$ of $\mathcal{B}$ is, roughly speaking, a codimension zero compact submanifold with boundary and corners of $M$, which is foliated by Lagrangian closed disks which are called {\em fibers}. 

\begin{definition}
	\label{def fibered ngbd}
	A {\em fibered neighborhood of $\mathcal{B}$} is a union $\cup_{p \in \mathcal{B}} F_p$, where $\{F_p \hspace{0.2em} | \hspace{0.2em} p \in \mathcal{B} \}$ is a family of Lagrangian disks satisfying 
	\begin{enumerate}
		\item for any $p \in \mathcal{B}$, $F_p \pitchfork \mathcal{B}$,
		\item for any $p, q \in \mathcal{B}$, either $F_p = F_q$ or $F_p \cap F_q = \varnothing$,
		\item there exists a closed neighborhood $U \subset \mathcal{B}$ of $Locus(\mathcal{B})$, such that $\{F_p \hspace{0.2em} | \hspace{0.2em} p \in U \}$ is a smooth family over each local sheet $L_i \cap U$, 
		\item for each sector $S$ of $\mathcal{B}$, $\{F_p \hspace{0.2em} | \hspace{0.2em} p \in S \setminus U \}$ is a smooth family,
		\item if $p \in S \cap \partial U$ where $S$ is a sector of $\mathcal{B}$, then, for any sequence $\{q_n \in S \setminus U \}_{n \in \mathbb{N}}$, 
		\begin{gather*}
		\lim_{n \to \infty} F_{q_n} \text{  is a Lagrangian disk such that  } \lim_{n \to \infty} F_{q_n} \subset \mathring{F}_p = F_p \setminus \partial F_p.
		\end{gather*}
 	\end{enumerate} 
\end{definition}

We will now give a specific construction of a fibered neighborhood $N(\mathcal{B})$. 

\begin{remark}
	\label{rmk natural embedding}
	By the Lagrangian neighborhood theorem \cite{MR0286137}, for any Lagrangian submanifold $L$ of $M$, there exists a small neighborhood $\mathcal{N}(L)$ of the zero section of $T^*L$ such that a symplectic embedding $i_L : \mathcal{N}(L) \hookrightarrow M$ is defined on $\mathcal{N}(L)$. 
	Without loss of generality, we assume that $\mathcal{N}(L)$ is a closed neighborhood. 
	Than, $\mathcal{N}(L)$ is foliated by closed Lagrangian disks $\mathcal{N}(L) \cap T^*_pL$. 
\end{remark}
\vskip0.2in

\noindent{\em Fibration over $L(\ell)$.}
First, we will construct fibers near the branch locus.
For each connected component $\ell$ of $Locus(\mathcal{B})$, we choose a small closed Lagrangian neighborhood $L(\ell)$ of $\ell$. 
Then, by Remark \ref{rmk natural embedding}, there exists a symplectic embedding 
$$ i_{L(\ell)} : \mathcal{N}(L(\ell)) \hookrightarrow M.$$
Let $U(L(\ell)) = i_{L(\ell)}(\mathcal{N(L(\ell))})$.

By choosing a sufficiently small $L(\ell)$, without loss of generality, the following hold:
\begin{gather*}
i_{L(\ell)}(\mathcal{N}(L(\ell)) \cap T^*_xL(\ell)) \cap \mathcal{B} \neq \varnothing \text{  for all } x \in L(\ell), \\
i_{L(\ell)}(\mathcal{N}(L(\ell)) \cap T^*_xL(\ell)) \pitchfork \mathcal{B} \text{  for all } x \in L(\ell), \\
U(\ell) \cap U(\ell') = \varnothing \text{  if  }  \ell \neq \ell'.
\end{gather*}

If $p \in \mathcal{B}$ is close to the branch locus, in other words, there is a connected component $\ell$ of $Locus(\mathcal{B})$ such that $p \in \mathcal{B} \cap U(\ell)$, then there exists $x \in L(\ell)$ such that $p \in i_{L(\ell)}(\mathcal{N}(L(\ell)) \cap T^*_xL(\ell))$.
Let $F_p := i_{L(\ell)}(\mathcal{N}(L(\ell)) \cap T^*_xL(\ell))$.
%
Then, $F_p$ is a closed Lagrangian disk containing $p$.

If $p \in \ell$, then, 
\begin{gather}
\label{eqn local properties}
F_p \pitchfork \mathcal{B} \text{ and } \partial F_p \cap \mathcal{B} = \varnothing.
\end{gather}
Moreover, by choosing a sufficiently small $L(\ell)$, for every $p \in \mathcal{B} \cap U(\ell)$, Equation \eqref{eqn local properties} holds. 

\begin{figure}[h]
	\centering
	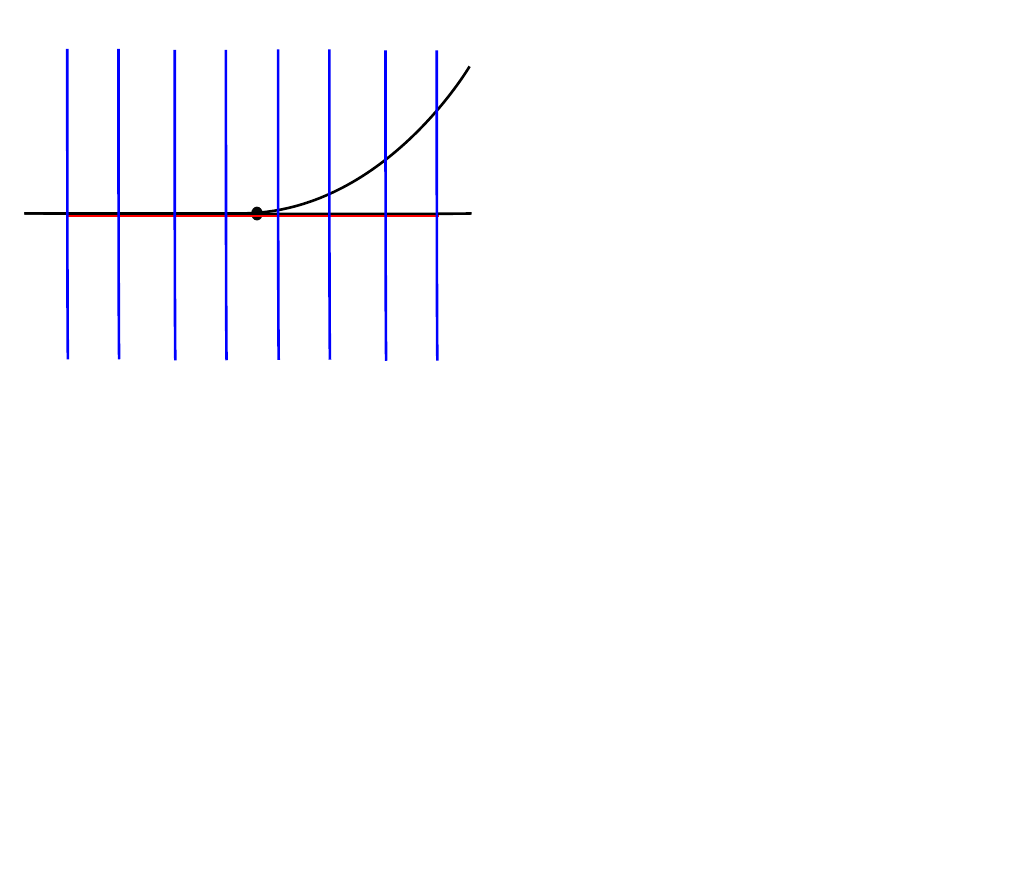
	\caption{Black curves are part of a Lagrangian branched submanifold and the black marked points denote a connected component $\ell$ of $Locus(\mathcal{B})$. 
	in (a), $L(\ell)$ is in red, and the fibers $F_p$, for $p \in \mathcal{B} \cap U(\ell)$, are in blue;
	(b) and (c) are not allowed by Equation \eqref{eqn local properties}; 
	and in (d), the red and green boxes are examples of $N(S)$.}
	\label{figure construction of fibered neighborhood}
\end{figure}

After possibly renaming $U(\ell)$, from now we assume that
$$U(\ell) = \cup_{p \in L(\ell)}F_p.$$
If $p \in \mathcal{B}\cap U(\ell)$, then there is a unique $q \in L(\ell)$ such that $p \in F_q$. 
We define $F_p := F_q$.
Thus, for $p \in \mathcal{B}$ which is close to $Locus(\mathcal{B})$, i.e., $p \in U(\ell)$ for some connected component $\ell$ of $Locus(\mathcal{B})$, we can define a fiber $F_p$ at $p$.
\vskip0.2in

\noindent{\em Fibration over $S \setminus \cup_{\ell} U(\ell)$.}
If $p \in \mathcal{B} \setminus \cup_\ell U(\ell)$, then there is a sector $S$ of $\mathcal{B}$ containing $p$. 
Since $S$ is Lagrangian, there is an embedding $i_S : \mathcal{N}(S) \hookrightarrow M$. 
We can assume that $\mathcal{N}(S)$ is small enough, so that 
\begin{gather*}
F_q \cap i_S\big(\mathcal{N}(S)\big) \subset \mathring{F_q} = F_q \setminus \partial F_q \text{  for any  } q \in \mathcal{B}\cap U(\ell), \\
\big( i_S(\mathcal{N}(S)) \setminus \cup U(\ell) \big) \cap \big( i_{S'}(\mathcal{N}(S')) \setminus \cup U(\ell) \big) = \varnothing.
\end{gather*}
Figure \ref{figure construction of fibered neighborhood} (d) represents examples of $\mathcal{N}(S)$.
We define $B_p$ for all $p \in S$ by setting
$$B_p:= i_S\big(\mathcal{N}(S) \cap T^*_pS\big).$$

For any sector $S$, let $S^\circ:= S - \cup_{\ell} \operatorname{Int} U(\ell)$.
Then, $S^\circ$ is a Lagrangian submanifold with boundary. 
The boundary of $S^\circ$ is a union of $S(\ell) := S \cap \partial\big(U(\ell)\big)$.
We fix a tubular neighborhood of $S(\ell)$, which is contained in $S^\circ$, and identify the tubular neighborhood with $S(\ell) \times [0,1)$. 
For convenience, we will pretend that $S(\ell) \times [0,1] \subset S$ and $S(\ell) \times \{0\} = S(\ell)$. 

If $p \in S^\circ$ does not lie in any $S(\ell) \times (0,1)$, then we set $F_p:=B_p$. 
\vskip0.2in

\noindent{\em Interpolation on $S(\ell)\times[0,1]$.}
If there is a connected component $\ell$ of $Locus(\mathcal{B})$ such that $p = (p_0, t_0) \in S(\ell) \times (0,1)$, we will construct $F_{p=(p_0,t_0)}$ from $F_{(p_0,0)}$ and $F_{(p_0,1)}$.
To do this, we need the following facts:

First, by the definition of $F_{(p_0,0)}$, $F_{(p_0,0)} \cap i_S\big(\mathcal{N}(S)\big)$ is a Lagrangian disk which contains $(p_0,0)$, and is transversal to $\mathcal{B}$ at $(p_0,0)$. 
Also, $B_{(p_0,0)}$ is also a Lagrangian disk which contains $(p_0,0)$, and is transversal to $\mathcal{B}$. 

By the Lagrangian neighborhood theorem \cite{MR0286137}, we can see $F_{(p_0,0)} \cap i_S\big(\mathcal{N}(S)\big)$ as a graph of a closed section in $T^*B_{(p_0,0)}$, i.e.,
$$F_{(p_0,0)} \cap i_S\big(\mathcal{N}(S)\big) = i_{B_{(p_0,0})}\big(\text{the graph of a closed section in  }T^*B_{(p_0,0)}\big).$$
Every closed section of $T^*B_{(p_0,0)}$ is an exact section because $B_{(p_0,0)}$ is a disk.
Thus, there is a function $f_{(p_0,0)} : B_{(p_0,0)} \to \mathbb{R}$ such that 
$$ F_{(p_0,0)} \cap i_S\big(\mathcal{N}(S)\big) = i_{B_{(p_0,0)}}\big(\text{the graph of } df_{(p_0,0)}\big).$$

Second, we will fix a Riemannian metric $g$ compatible with $\omega$ for convenience.
By restricting $g$ to $S$, $S$ is equipped with a Riemannian metric $g|_S$.
Thus, for any $t_0 \in [0,1]$, there is a parallel transport induced by $g|_S$, between $T_{(p_0,t_0)}S$ and $T_{(p_0,0)}S$ along $\gamma_{p_0}(t) = (p_0, t) \in S$. 
Also, $g$ induces a bijection between $T_{(p_0,0)}S$ (resp.\ $T_{(p_0,t_0)}S$) and $T_{(p_0,0)}^*S$ (resp.\ $T_{(p_0,t_0)}^*S$).
Thus, there is a bijective map between $B_{(p_0,t_0)}$ and $B_{(p_0,0)}$. 

From those two facts, we define a function $f_{(p_0,t)} : B_{(p_0,t)} \to \mathbb{R}$ as follows:
\begin{gather*}
f_{(p_0,t)} : B_{(p_0,t)} \stackrel{\sim}{\to} B_{(p_0,0)} \xrightarrow{(1-t)f_{(p_0,0)}} \mathbb{R}.
\end{gather*}
The first arrow comes from the parallel transport induced by $g$. 

There is a map,
\begin{gather*}
h : \cup_{(p_0,t) \in S(\ell) \times [0,1]} B_{(p_0,t)} \to M, \\
x \in B_{(p_0,t)} \mapsto i_{B_{(p_0,t)}} ( d f_{B_{(p_0,t)}}(x)).
\end{gather*}
It is easy to check that $h(p_0,t) = (p_0,t)$.
Moreover, $h$ is the associated (time 1) flow of the Hamiltonian vector field of 
$$f_{(p_0,t)} : \cup_{(p_0,t) \in S(\ell) \times [0,1]} B_{(p_0,t)} \to \mathbb{R}.$$
Finally, we construct $F_{(p_0,t_0)}$ by setting 
$$F_{(p_0,t_0)}:= h(B_{(p_0,t_0)}).$$

\begin{figure}[h]
	\centering
	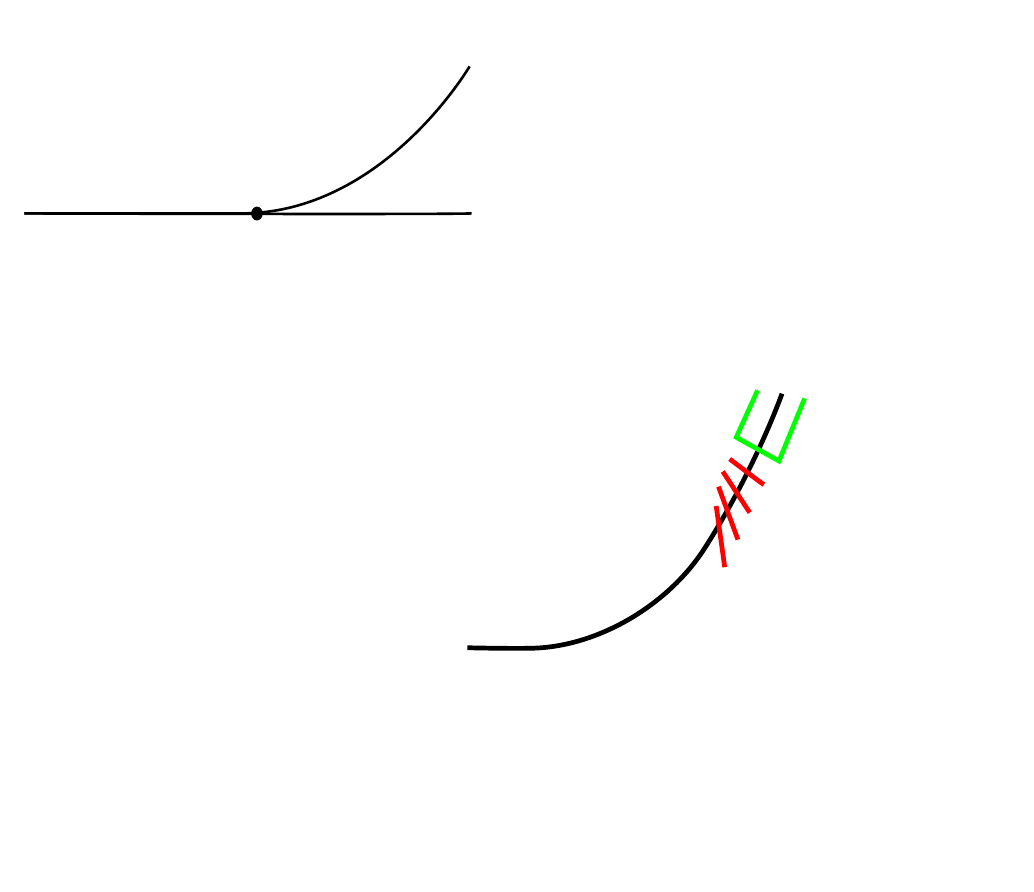
	\caption{Black curves are part of a Lagrangian branched submanifold and marked points denote $\ell$; 
		in (a), $U(\ell)$ is shaded blue, the vertical line segments are fibers;
		(b) fiber $F_p$ for $p \notin S(\ell) \times (0,1]$ is in green; 
		and in (c), fiber $F_p$ for $p \in S(\ell) \times (0,1]$ is in red}
	\label{figure fibered neighborhood}
\end{figure}

A fibered neighborhood $N(\mathcal{B})$ is given by the union of fibers, i.e., $ N(\mathcal{B}) = \cup_{p \in \mathcal{B}} F_p$.
Note that the construction of $N(\mathcal{B})$ is not unique because the construction depends on some choices, including the choices of $L(\ell)$ and a Riemannian metric $g$.
\vskip0.2in

\subsection{Associated branched manifolds and the notion of ``carried by''.} 
We constructed a fibered neighborhood $N(\mathcal{B})$. 
From now on, we will define a projection map defined on $N(\mathcal{B})$, in order to define the notion of ``carried by''.

First, we define {\em the associated branched manifold $\mathcal{B}^*$} of $\mathcal{B}$. 
\begin{definition}
	Let $\mathcal{B}$ be a Lagrangian branched submanifold of $M$ and let $N(\mathcal{B})$ be a fibered neighborhood of $\mathcal{B}$.
	Then, the {\em associated branched submanifold} $\mathcal{B}^*$ is defined by setting 
	$$ \mathcal{B}^* := N(\mathcal{B}) / \sim, \hspace{0.2em} x \sim y \text{  if  } \exists F_p \text{  such that  } x, y \in F_p.$$	 
\end{definition} 
Let  $\pi : N(\mathcal{B}) \to \mathcal{B}^*$ denote the quotient map. 

Before defining the notion of ``carried by'', we note that $\mathcal{B}^*$ is not contained in $M$.
Moreover, since $\mathcal{B}^*$ is a branched manifold, we can define the branch locus and sectors of $\mathcal{B}^*$ as follows:
\begin{definition}
	\label{def of branch locus and sector for abstract branched manifold}
	$\mbox{}$
	\begin{enumerate}
		\item A {\em sector} of $\mathcal{B}^*$ is a connected component of 
		$$ \{ p \in \mathcal{B}^* \hspace{0.2em} | \hspace{0.2em} p \text{  has a neighborhood which is homeomorphic to  } \mathbb{R}^n \}.$$
		\item A {\em branch locus} of $\mathcal{B}^*$ is the complement of all the sectors. 
	\end{enumerate}
\end{definition}

\begin{figure}[h]
	\centering
	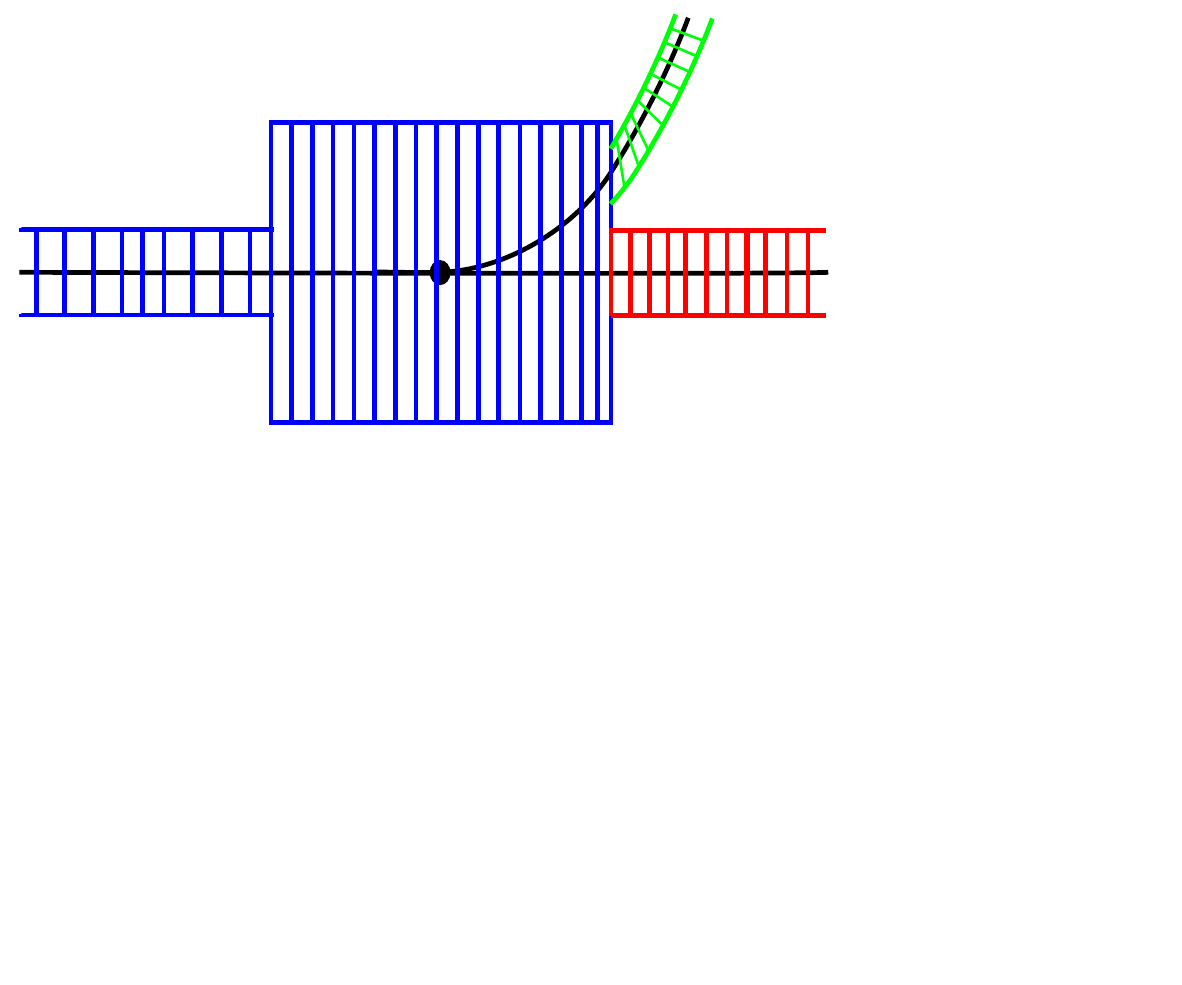		
	\caption{(a) represents $\pi:N(\mathcal{B}) \to \mathcal{B}^*$.
		In $N(\mathcal{B})$, the blue, red, and green represent $\pi^{-1}(S_0)$, $\pi^{-1}(S_1)$, and $\pi^{-1}(S_2)$, where $S_i$ is the corresponding sector of $\mathcal{B}^*$;
		(b) represents $F_x$ where $x$ is in the branch locus of $\mathcal{B}^*$ in (a).}
	\label{fig abstract branched manifold}
\end{figure}

\begin{remark}
	\label{rmk nonuniqueness of fibered neigbhrohood}
	\mbox{}
	\begin{enumerate}
		\item The construction of $N(\mathcal{B})$ depends on the choices of a Riemannian metric, a closed neighborhood of $Locus(\mathcal{B})$, and so on. 
		Thus, fibered neighborhoods $N(\mathcal{B})$ of $\mathcal{B}$ are not unique.
		However, $\mathcal{B}^*$ is unique as a branched manifold since $\mathcal{B}$ and $\mathcal{B}^*$ are equivalent as branched manifolds.
		 
		In the rest of this paper, when it comes to a Lagrangian branched submanifold $\mathcal{B}$, we will consider a triple $(\mathcal{B}, N(\mathcal{B}), \mathcal{B}^*)$ with an arbitrary choice of $N(\mathcal{B})$. 
		Moreover, for any triple $(\mathcal{B}, N(\mathcal{B}), \mathcal{B}^*)$, the projection map is denoted by $\pi$ for convenience. 
		\item A fibered neighborhood$N(\mathcal{B})$ is a union of fibers, i.e., $N(\mathcal{B}) = \cup_{p \in \mathcal{B}} F_p$.
		In the equation, $\mathcal{B}$ is an index set. 
		However, there is a possibility of having two distinct points $p, q \in \mathcal{B}$ such that $F_p = F_q$. 
		From now on, we will use $\mathcal{B}^*$ as an index set.
		In other words, we replace $F_p$ by $\pi^{-1}(\pi(p))$.
		By abuse of notation, $F_x$ denotes $\pi^{-1}(x)$ for all $x \in \mathcal{B}^*$. 
		\item Let $x$ be a branch point of $\mathcal{B}^*$. 
		Then, there are sectors $S_0, S_1, \cdots, S_l$ of $\mathcal{B}^*$ for some $l$ such that 
		\begin{gather*}
		x \in \bar{S}_i \text{  for every  } i = 0, 1, \cdots, l \\
		F_x \cap \overline{\pi^{-1}(S_0)} = F_x  \text{  and  } F_x \cap \overline{\pi^{-1}(S_i)} \subset \mathring{F}_x = F_x \setminus \partial F_x  \text{  for every  } i = 1, 2, \cdots, l.
		\end{gather*}
		Figure \ref{fig abstract branched manifold} represents this. 
	\end{enumerate}
\end{remark}
	
From now on, we define the notion of ``carried by''. 
If a Lagrangian submanifold $L$ (resp.\ a Lagrangian branched submanifold $\mathcal{L}$) is contained in $N(\mathcal{B})$, there is a restriction of $\pi$ to $L$ (resp.\ $\mathcal{L}$).
For convenience, we will simply use $\pi$ instead of $\pi|_L : L \to \mathcal{B}^*$.  

\begin{definition}
	\label{def singular/regular point}
	Let $L$ be a Lagrangian submanifold (resp.\ $\mathcal{L}$ be a Lagrangian branched submanifold) of $N(\mathcal{B})$. 
	\begin{enumerate}
		\item $x \in L$ (resp.\ $\mathcal{L}$) is a {\em regular point} of $\pi$ if $L \pitchfork F_{\pi(x)}$ (resp.\ $\mathcal{L} \pitchfork F_{\pi(x)}$) at $x$.
		\item $x \in L$ (resp.\ $\mathcal{L}$) is a {\em singular point} of $\pi$ if $x$ is not regular point of $\pi: L' \to \mathcal{B}^*$.
		Moreover, values of $\pi$ at singular points are called {\em singular values} of $\pi$.
		$y \in \mathcal{B}^*$ is a {\em singular value} of $\pi$ if there is a singular point $x$ of $\pi$ such that $\pi(x) = y$. 
		\item $L$ is {\em minimally singular with respect to $\mathcal{B}$} if $\pi: L \to \mathcal{B}^*$ has no singular value on the branch locus of $\mathcal{B}^*$ and $|F_x \cap L| = |F_y \cap L|$, for any non-singular value $x$ and $y$ which lie in the same sector of $\mathcal{B}^*$, where $|\cdot|$ means the cardinality of a set.
	\end{enumerate}
\end{definition}
	
\begin{definition}
	\label{def of carried by}
	$\mbox{}$
	\begin{enumerate}
		\item A Lagrangian submanifold $L$ (resp.\ a Lagrangian branched submanifold $\mathcal{L}$) is {\em strongly carried by} a Lagrangian branched submanifold $\mathcal{B}$ if $L$ (resp.\ $\mathcal{L}$) is Hamiltonian isotopic to a Lagrangian submanifold $L'$ (resp.\ a Lagrangian branched submanifold $\mathcal{L}'$) such that $L'$ (resp.\ $\mathcal{L}'$) $\subset N(\mathcal{B})$ and $\pi: L' \to \mathcal{B}^*$ has no singular value.
		\item A Lagrangian submanifold $L$ (resp.\ a Lagrangian branched submanifold $\mathcal{L}$) is {\em weakly carried by} a Lagrangian branched submanifold $\mathcal{B}$ if $L$ (resp.\ $\mathcal{L}$) is Hamiltonian isotopic to a Lagrangian submanifold $L'$ (resp.\ a Lagrangian branched submanifold $\mathcal{L}'$) such that $L'$ (resp.\ $\mathcal{L}'$) $\subset N(\mathcal{B})$, $L'$ is minimally singular, and $\pi: L' \to \mathcal{B}^*$ has countably many singular values.
		\item Two Lagrangian submanifolds $L$ and $L'$ that are weakly carried by $\mathcal{B}$ are {\em weakly fiber isotopic} if there exists an isotopy for $L$ and $L'$ through Lagrangians that are weakly carried by $\mathcal{B}$.
	\end{enumerate}
\end{definition}
In the rest of this paper, if $L$ is weakly carried by $\mathcal{B}$, then we will assume that $L \subset N(\mathcal{B})$ and $L$ is minimally singular with respect to $\mathcal{B}$. 
 
Note that the notion of ``carried by'' used by Thurston in \cite{MR956596} is our notion of ``strongly carried by''. 
Thurston showed that for a pseudo-Anosov surface automorphism $\psi : S \stackrel{\sim}{\to} S$, there is a 1-dimensional branched submanifold $\tau$ which is called a train track such that $\psi(\tau)$ is strongly carried by $\tau$.

Our higher-dimensional generalization is slightly weaker,
i.e., for some symplectic automorphism $\psi : (M,\omega) \stackrel{\sim}{\to} (M,\omega)$, we construct a Lagrangian branched submanifold $\mathcal{B}_{\psi}$ such that $\psi(\mathcal{B}_{\psi})$ is weakly carried by $\mathcal{B}_{\psi}$.
In other words, we allow non-transversality at countably many point $p \in \mathcal{B}_{\psi}$. 
However, we allow only one type of non-transversality. 
In the rest of the present subsection, we will describe the unique type of non-transversality.   

\begin{definition}
	\label{def of singularity}
	Let $L$ be weakly carried by $\mathcal{B}$. 
	A {\em singular component} $V$ of $\pi : L \to \mathcal{B}$ is a connected component of the set of all singular points of $\pi$.
\end{definition}

\begin{exmp}
	\label{exmp of simplest singularity}
	Let $M_*$ be a symplectic manifold $T^*\mathbb{R}^n \simeq \mathbb{R}^{2n}$ equipped with the canonical symplectic form.
	The zero section $\mathcal{B}_*:= \mathbb{R}^n \times 0 \subset \mathbb{R}^{2n}$ is a Lagrangian branched submanifold. 
	We assume that the fibered neighborhood $N(\mathcal{B}_*)$ is $M_*$, by setting $F_p := T^*_p \mathbb{R}^n$ for all $p \in \mathbb{R}^n = \mathcal{B}_*$.
	Then, a Lagrangian submanifold
	$$L_*:= \{(tx,x) \in \mathbb{R}^n \times \mathbb{R}^n \hspace{0.2em} | \hspace{0.2em} t \in \mathbb{R}, x \in S^{n-1} \subset \mathbb{R}^n\}$$
	is weakly carried by $\mathcal{B}_*$ and $\pi_*$ has only one singular component
	$$V_* := \{ (0,x) \hspace{0.2em} | \hspace{0.2em} x \in S^{n-1} \}.$$
\end{exmp}

\begin{definition}
	\label{def of real blow-up type}
	A singular component $V$ of $\pi : L \to \mathcal{B}$ is of {\em real blow-up type} if there exists an open neighborhood $U$ of $V$ and a symplectomorphism $\phi : U \stackrel{\sim}{\to} M_*$ such that $\phi(U \cap \mathcal{B}) = \mathcal{B}_*, \phi(V) = V_*$, and $\phi^{-1} \circ \pi_* \circ \phi = \pi$, where $M_*, \mathcal{B}_*$, $V_*$, and $\pi_*$ are defined in Example \ref{exmp of simplest singularity}.
\end{definition}

\begin{definition}
	\label{def of fully/weakly carried by}
	A Lagrangian submanifold $L$ (resp.\ a Lagrangian branched submanifold $\mathcal{L}$) is {\em carried by} a Lagrangian branched submanifold $\mathcal{B}$ if $L$ (resp.\ $\mathcal{L}$) is weakly carried by $\mathcal{B}$ and every singular component of $\pi$ (resp.\ $\pi$) is a singular component of real blow-up type.
\end{definition}

\subsection{The generalized Penner construction}  
In this subsection, we give a higher-dimensional generalization of Penner construction \cite{MR930079} of pseudo-Anosov surface automorphisms. 
The generalization replaces Dehn twists by generalized Dehn twists along Lagrangian spheres.

\underline{Generalized Penner construction} : Let $M$ be a symplectic manifold.
A symplectic automorphism $\psi : M \stackrel{\sim}{\to} M$ is of {\em generalized Penner type} if there are two collections $A = \{\alpha_1, \cdots, \alpha_m \}$ and $B = \{\beta_1, \cdots, \beta_l \}$ of Lagrangian spheres 
such that 
\begin{gather*}
\alpha_i \cap \alpha_j = \varnothing, \hspace{0.2em} \beta_i \cap \beta_j =\varnothing, \hspace{0.2em} \text{for all} \hspace{0.2em} i \neq j, \\
\alpha_i \pitchfork \beta_j \hspace{0.2em} \text{for all} \hspace{0.2em} i, j,
\end{gather*}
so that
$\psi$ is a product of positive powers of Dehn twists $\tau_i$ along $\alpha_i$ and negative powers of Dehn twists $\sigma_j$ along $\beta_j$, subject to the condition that every sphere appear in the product. 

A Lagrangian sphere $\alpha_i$ (resp.\ $\beta_j$) is called a {\em positive} (resp.\ {\em negative}) sphere since only positive powers of $\tau_i$ (resp.\ negative powers of $\sigma_j$) appear in $\psi$.

\begin{remark}
	$\mbox{}$
	\begin{enumerate}
		\item In Theorems \ref{branched surface thm} and \ref{lamination thm}, we can assume that the symplectic manifold $M$ is a plumbing space.
		Every $\tau_i$ (resp.\ $\sigma_j$) is supported on a neighborhood of $\alpha_i$ (resp.\ $\beta_j$), which is denoted by $T^*\alpha_i$ (resp.\ $T^*\beta_j$). 
		Thus, $\psi$ is supported on the union of $T^*\alpha_i$ and $T^*\beta_j$. 
		By the transversality condition $\alpha_i \pitchfork \beta_j$, we can identify the union with a plumbing space $P=P(\alpha_1, \cdots, \alpha_m, \beta_1, \cdots, \beta_l)$.
		Thus, it is suffices to prove Theorems \ref{branched surface thm} and \ref{lamination thm} on the plumbing space $P$, which we take to be connected.
		\item In \cite{MR930079}, the Penner construction required that $A$ and $B$ fill the surface $S$, i.e., the complement of $A \cup B$ is a union of disks and annuli, one of whose boundary components is a component of $\partial S$.
		In the current paper, we do not require the analogue of the filling condition since we only construct an invariant Lagrangian branched submanifold and an invariant Lagrangian lamination, not an invariant singular foliation on all of $M$.
	\end{enumerate}
\end{remark}

In the rest of this subsection, we define a set of Lagrangian branched submanifolds in a plumbing space $P(\alpha_1, \cdots, \alpha_m, \beta_1, \cdots, \beta_l)$. 
We start from the simplest plumbing space, having one positive and one negative sphere intersecting at only one point. 
\begin{exmp}
	\label{exmp of Lagrangian branched submanifold}
	Let $\alpha$ and $\beta$ be $n$-dimensional spheres and let $M$ be a plumbing $P(\alpha, \beta)$ which is plumbed at only one point $p$.
	Let $\beta \#_p \alpha$ be the Lagrangian surgery of $\alpha$ and $\beta$ at $p$ such that $\beta \#_p \alpha \simeq \tau_{\alpha}(\beta) \simeq \sigma_{\beta}^{-1}(\alpha)$.
	See Figure \ref{LBS example}, which represents the case $n=1$.
	The cross-shape is the plumbing space $P(\alpha,\beta)$, where $\alpha$ is the horizontal line and $\beta$ is the vertical line. 
	\begin{figure}[h]
		\centering
		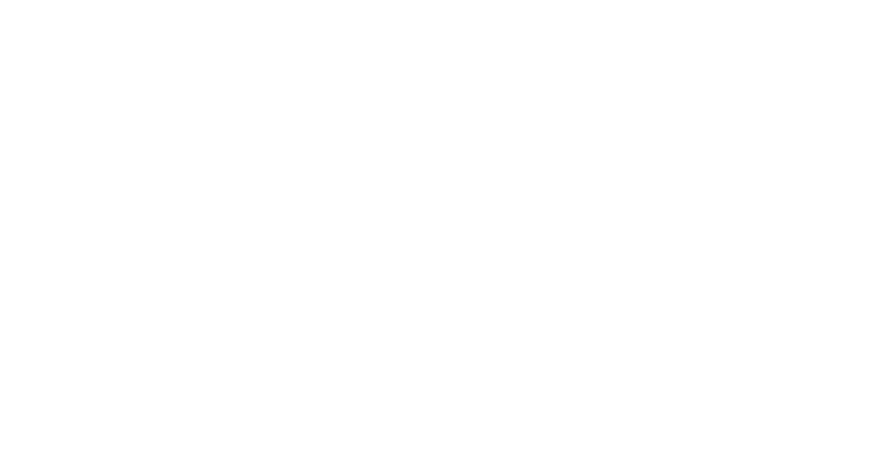
		\caption{The blue curves represent $D_p^+$ in the left hand picture and $D_p^-$ in the right hand picture, the red curves represent $N_p$ in both.}
		\label{LBS example}
	\end{figure}

	The {\em neck $N_p$ at $p$} connecting $\alpha$ and $\beta$ is the closure of $(\beta\#_p \alpha) - (\alpha \cup \beta)$.
	In Figure \ref{LBS example}, $N_p$ is drawn in red.
	The {\em positive disk $D_p^+$at $p$} is the closure of $\alpha - (\beta\#_p \alpha)$ and the {\em negative disk $D_p^-$ at $p$} is the closure of $\beta - (\beta \#_p \alpha)$. 
	The disks $D_p^{\pm}$ are drawn in blue in Figure \ref{LBS example}.
	Then, by attaching $D_p^+$ or $D_p^-$ to $\beta \#_p \alpha$, we obtain  Lagrangian branched submanifolds $(\beta \#_p \alpha) \cup \alpha$ and $(\beta \#_p \alpha) \cup \beta$.
\end{exmp}

On a general plumbing space $M = P(\alpha_1, \cdots, \alpha_m, \beta_1, \cdots, \beta_l)$ with positive spheres $\alpha_i$ and negative spheres $\beta_j$, we similarly construct Lagrangian branched submanifolds.
More precisely, given a plumbing point $p, N_p, D_p^+, D_p^-$ are the closures of $(\beta_j \#_p \alpha_i) - (\alpha_i \cup \beta_j), \alpha_i - (\beta_j \#_p \alpha_i), \beta_j - (\beta_j \#_p \alpha_i)$ respectively.
Let $D_p$ be either $D_p^+$ or $D_p^-$.
Then, we construct a Lagrangian branched submanifold $\mathcal{B}$ by setting 
\begin{align}
\label{eqn definition of lbs}
\mathcal{B} := \cup_{i}(\alpha_i - \cup_{p \in \alpha_i}D_p^+) \bigcup \cup_{j}(\beta_j - \cup_{p \in \beta_j}D_p^-) \bigcup \cup_p N_p \bigcup \cup_p D_p.
\end{align}
There are $2^N$ possible choices of $\mathcal{B}$, where $N$ is the number of plumbing points. 
Let $\mathbb{B}$ be the set of all $2^N$ Lagrangian branched submanifolds constructed above.

\subsection{Proof of Theorem \ref{branched surface thm}}

In this subsection, let $M = P(\alpha_1, \cdots, \alpha_m, \beta_1, \cdots, \beta_l)$, let $\tau_i$ (resp.\ $\sigma_j$) be a Dehn twist along $\alpha_i$ (resp.\ $\beta_j$), and let $\psi$ be of generalized Penner type. 

In the rest of the paper, we assume that every Dehn twist $\tau_i$ and $\sigma_j$ satisfies the following:
\begin{enumerate}
	\item $\tau_i$ (resp.\ $\sigma_j$) is supported on a small neighborhood $T^*\alpha_i$ (resp.\ $T^*\beta_j$) of $\alpha_i$ (resp.\ $\beta_j$). 
	\item $\tau_i$ (resp.\ $\sigma_j$) agrees with the antipodal map on $\alpha_i$ (resp.\ $\beta_j$).
\end{enumerate}

We define the following: 
\begin{gather}
\label{eqn definition of disks and primes}
\bar{D}_p^+ := \tau_i(D_p^+), \hspace{0.3em} \bar{D}_p^- := \sigma_j^{-1}(D_p^-) \hspace{1em} \text{   if }  p \in \alpha_i \cap \beta_j, \\
\nonumber
\alpha_i' := \alpha_i - \cup_{p \in \alpha_i} (D_p^+ \cup \bar{D}_p^+), \hspace{0.3em} \beta_j' : = \beta_j - \cup_{p \in \beta_j} (D_p^- \cup \bar{D}_p^-).  
\end{gather}
In words, $\bar{D}_p^+$ (resp.\ $\bar{D}_p^-$) is a neighborhood of an antipodal point of $p$ in $\alpha_i$ (resp.\ $\beta_j$).
We are assuming that $D_p^{\pm}$ and $\bar{D}_p^{\pm}$ are sufficiently small so that they are disjoint to each other.

Recall that $\mathbb{B}$ is the set of Lagrangian branched submanifolds defined in Section 3.2.

\begin{lemma}
	\label{lem1}
	For all $k$, there exists a function $F_{\tau_k}: \mathbb{B} \to \mathbb{B}$ such that $\tau_k(\mathcal{B})$ is carried by $F_{\tau_k}(\mathcal{B})$  for all $\mathcal{B} \in \mathbb{B}$.
	Similarly, there is a function $F_{\sigma_j^{-1}}:\mathbb{B} \to \mathbb{B}$ for all $j$ such that $\sigma_j^{-1}(\mathcal{B})$ is carried by $F_{\sigma_j^{-1}}(\mathcal{B})$. 
\end{lemma}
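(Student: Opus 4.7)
The plan is to give an explicit combinatorial definition of $F_{\tau_k}$ on the $2^N$ elements of $\mathbb{B}$, and then verify the carrying property by a local analysis on the support of $\tau_k$; the case of $F_{\sigma_j^{-1}}$ is entirely analogous. Recall that an element $\mathcal{B} \in \mathbb{B}$ is parametrized by a choice $D_p \in \{D_p^+, D_p^-\}$ at each plumbing point $p$. I would define $F_{\tau_k}(\mathcal{B})$ to be the element of $\mathbb{B}$ whose choice at every $p \in \alpha_k$ is $D_p^+$ and which agrees with $\mathcal{B}$ at every plumbing point outside of $\alpha_k$. With this rule, the $\alpha_k$-sheet of $F_{\tau_k}(\mathcal{B})$ is the full sphere $\alpha_k$, since every removed $D_p^+$ is put back. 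The mirror rule defines $F_{\sigma_j^{-1}}$: set $D_p = D_p^-$ at every $p \in \beta_j$ and keep all other choices fixed.

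Next I would analyze $\tau_k(\mathcal{B})$ piece by piece, using that $\tau_k$ is supported in $T^*\alpha_k$ and restricts to the antipodal map on $\alpha_k$. Outside $T^*\alpha_k$ the map is the identity, and $F_{\tau_k}(\mathcal{B})$ agrees with $\mathcal{B}$ there by construction, so the carrying condition is automatic. Inside $T^*\alpha_k$, the moved pieces of $\mathcal{B}$ are (i) the punctured sphere $\alpha_k - \bigcup_{p \in \alpha_k} D_p^+$, which $\tau_k$ sends to $\alpha_k - \bigcup_{p \in \alpha_k} \bar{D}_p^+$; (ii) each neck $N_p$ for $p \in \alpha_k$; and (iii) the chosen disk $D_p$ at every $p \in \alpha_k$, which equals $\bar{D}_p^+$ after applying $\tau_k$ when $D_p = D_p^+$, by the definition of $\bar{D}_p^+$ in Equation~\eqref{eqn definition of disks and primes}, and equals the Dehn-twisted disk $\tau_k(D_p^-)$ otherwise.

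The key identification comes from the spinning construction in the proof of Theorem~\ref{lagrangian surgery theorem}: for each $p \in \alpha_k \cap \beta_j$, the image $\tau_k(D_p^-)$ is Hamiltonian isotopic, via an isotopy supported in $T^*\alpha_k$, to $N_p \cup \bar{D}_p^+$. Therefore every piece of $\tau_k(\mathcal{B})$ listed above fits, up to these local isotopies, inside the union of the full $\alpha_k$-sheet and the necks of $F_{\tau_k}(\mathcal{B})$. Assembling the local isotopies into a single global Hamiltonian isotopy and fitting the result into a fibered neighborhood $N(F_{\tau_k}(\mathcal{B}))$ constructed as in Section 3.2, one checks that the projection $\pi$ is transverse to the image of $\tau_k(\mathcal{B})$ except along the spun $(n-1)$-sphere where $\tau_k(D_p^-)$ touches the $\alpha_k$-sheet, whose local model is exactly the one in Example~\ref{exmp of simplest singularity}. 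Each singular component is therefore of real blow-up type in the sense of Definition~\ref{def of real blow-up type}, and by Definition~\ref{def of fully/weakly carried by}, $\tau_k(\mathcal{B})$ is carried by $F_{\tau_k}(\mathcal{B})$.

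The main obstacle is verifying step (iii) together with the global assembly: one must show that the spun image of $D_p^-$ matches the real blow-up model of Example~\ref{exmp of simplest singularity} after a controlled Hamiltonian isotopy, and that the individual isotopies — together with the adjustments needed to move each twisted neck $\tau_k(N_p)$ into place — can be combined into a single Hamiltonian isotopy whose image sits in $N(F_{\tau_k}(\mathcal{B}))$. The freedom in choosing the closed neighborhoods $L(\ell)$ and the Riemannian metric in the construction of Section 3.2, together with the standing assumption that the disks $D_p^{\pm}, \bar{D}_p^{\pm}$ are small enough to be mutually disjoint, should suffice to rule out unwanted sheet collisions. The statement for $F_{\sigma_j^{-1}}$ then follows by the symmetric argument with $\alpha_k$ and $D_p^+$ replaced by $\beta_j$ and $D_p^-$.
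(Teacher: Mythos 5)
Your combinatorial rule for $F_{\tau_k}$ (set $D_p = D_p^+$ at every $p \in \alpha_k$, keep all other choices) is exactly the paper's, and your overall strategy — decompose $\mathcal{B}$ into the punctured spheres, necks, and disks, note that $\tau_k$ acts as the antipodal map away from the plumbing points, and use spinning to control the pieces near $p \in \alpha_k$ — is the same as the paper's proof. However, your ``key identification'' step contains two concrete errors.

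First, the claim that $\tau_k(D_p^-)$ is Hamiltonian isotopic to $N_p \cup \bar{D}_p^+$ cannot be right: $D_p^-$ is a disk, while $N_p$ and $\bar{D}_p^+$ are disjoint (the neck joins $\partial D_p^+$ to $\partial D_p^-$ near $p$ and never reaches the antipodal region), so $N_p \cup \bar{D}_p^+$ is a disjoint union of $S^{n-1}\times I$ and a disk. The correct statement, which is what the paper proves via the spinning computation of Equation \eqref{eqn spinning of disk}, is that the twisted fiber disk $\tau_k(D_p^-)$ wraps once around the sphere and is \emph{strongly} carried by $N_p \cup (\alpha_k - D_p^+)$ — it sweeps over essentially all of $\alpha_k$, not just a neighborhood of the antipode. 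If one took your version literally, the sheets of $\tau_k(\mathcal{B})$ lying over the sectors of $\alpha_k$ away from $\bar{D}_p^+$ would be unaccounted for.

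Second, you attribute the real blow-up singularity to the locus where $\tau_k(D_p^-)$ meets the $\alpha_k$-sheet. In fact a direct computation with the circle action $\sigma$ and a profile with $\tilde{r}(0)=\pi$, $\tilde{r}'(0)<0$ shows that near $\tau_k(p)$ the twisted fiber disk is the graph of a linear map over the base, hence transverse to all fibers of $N(F_{\tau_k}(\mathcal{B}))$; this is why the paper can assert in item (iv) that $\tau_k(D_p^-)$ is \emph{strongly} carried. The singular component of real blow-up type modeled on Example \ref{exmp of simplest singularity} arises instead from the twisted neck $\tau_k(N_p)$, which is only carried (not strongly carried) by $N_p \cup (\alpha_k - D_p^+)$. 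Since Definition \ref{def of fully/weakly carried by} requires every singular component to be identified and shown to be of real blow-up type, locating the singularity on the wrong piece is a genuine gap in the verification, even though the final conclusion and the function $F_{\tau_k}$ you define are correct.
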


\begin{proof}
	In this proof, $\tau_k$ is given by Equation \eqref{eqn definition of generalized Dehn along L} and $\tilde{\tau}:T^*S^n \stackrel{\sim}{\to} T^*S^n$ defined in Section 2.2, i.e., $\tau_k = \phi \circ \tilde{\tau} \circ \phi^{-1}$ in a neighborhood of $\alpha_k$, where $\phi$ is an identification of $T^*S^n$ and a neighborhood of $\alpha_k$.  
	
	Given $\mathcal{B} \in \mathbb{B}, \mathcal{B}$ admits the following decomposition:
	\begin{align}
	\label{eqn decomposition}
	\mathcal{B}= \cup_i \alpha_i' \bigcup \cup_j \beta_j' \bigcup \cup_p N_p \bigcup \cup_p \bar{D}_p^+ \bigcup \cup_p \bar{D}_p^- \bigcup \cup_p D_p,
	\end{align}	
	where $D_p$ is either $D_p^+$ or $D_p^-$.
	This follows from Equations \eqref{eqn definition of lbs} and \eqref{eqn definition of disks and primes}.
	
	We prove the first statement for $\tau_k$; the proof for $\sigma_j^{-1}$ is analogous.
	Our strategy is to apply $\tau_k$ to $\alpha_i', \beta_j', N_p, \bar{D}_p^{\pm}$, and $D_p^{\pm}$.
	We claim the following: 
	\begin{itemize}
		\item[(i)] $\tau_k(\alpha'_i) = \alpha_i', \tau_k(\beta'_j) = \beta_j'$ and they are strongly carried by $\alpha_i', \beta_j'$.
		\item[(ii)] If $p \notin \alpha_k$, then
		$\tau_k(N_p) = N_p, \tau_k(D_{p}^{\pm}) = D_{p}^{\pm}, \tau_k(\bar{D}_{p}^{\pm}) = \bar{D}_{p}^{\pm}$ and they are strongly carried by $N_p,\hspace{2pt} D_{p}^{\pm}, \hspace{2pt} \bar{D}_{p}^{\pm}$.
		\item[(iii)] If $ p \in \alpha_k$, then
		$\tau_k(D_p^+) = \bar{D}_p^+, \tau_k(\bar{D}_p^+) = D_p^+$,  $\tau_k(\bar{D}_p^-) = \bar{D}_p^-$ and they are strongly carried by $\bar{D}_p^+, D_p^+, \bar{D}_p^-$.
		\item[(iv)] If $ p \in \alpha_k$, then $\tau_k(D_p^-)$ and $\tau_k(N_p)$ are obtained by spinning with respect to $p$.
		Moreover, $\tau_k(D_p^-)$ is strongly carried by $N_p \cup (\alpha_k - D_p^+ )$ and $\tau_k(N_p)$ is carried by $N_p \cup (\alpha_k - D_p^+ )$. 
	\end{itemize}
	By Equation \eqref{eqn decomposition} and $(i)$--$(iv), \tau_k(\mathcal{B})$ is carried by $\mathcal{B}'$ such that 
	\begin{align}
	\label{eqn define F_}
	\mathcal{B}' = \cup_i \alpha_i' \bigcup \cup_j \beta_j' \bigcup \cup_p N_p \bigcup \cup_p \bar{D}_p^+ \bigcup \cup_p \bar{D}_p^- \bigcup \cup_p \tilde{D}_p,
	\end{align}
	where $\tilde{D}_p$ is $D_p$ if $p \notin \alpha_k$ and $D_p^+$ if $p \in \alpha_k$.
	Then, $F_{\tau_k} : \mathbb{B} \to \mathbb{B}$ is defined by $F_{\tau_k}(\mathcal{B}) = \mathcal{B}'$. 	

	$(i)$ Since $\tau_k$ agrees with the antipodal map on $\alpha_k, \tau_k(\alpha_k') = \alpha_k'$ and $\tau_k(\alpha_k')$ is strongly carried by $\alpha_k'$.
	Moreover, since $\tau_k$ is supported on $T^*\alpha_k, \alpha_i'$ does not intersect the support of $\tau_k$ for all $i \neq k$. 
	Thus, $\tau_k(\alpha_i')$ agrees with $\alpha_i'$ and $\tau_k(\alpha_i')$ is strongly carried by itself.
	The same proof applies to $\tau_k(\beta_j')$.
	
	$(ii)$ and $(iii)$ are proved in the same way.
	
	$(iv)$ We compute $\tau_k(D_p^-)$ and $\tau_k(N_p)$ by spinning with respect to $p$ and $\phi$.
	We assume $\phi((1,0_{n};0_{n+1})) = p$ without loss of generality.
	Using the notation from Section 2, $D_p^-$ and $N_p$ are contained in $\cup_{y \in S^{n-1}} \phi(W_y)$. 
	Thus,
	\begin{align}
	\label{eqn spinning of disk}
	\tau_k(D_p^-) &= \cup_{y \in S^{n-1}} (\phi \circ \tilde{\tau} \circ \phi^{-1}) (D_p^- \cap \phi(W_y))\\
	\nonumber & = \cup_{y \in S^{n-1}} (\phi(\tilde{\tau}|_{W_y}(\phi^{-1}(D_p^-)\cap W_y))) \\
	\nonumber &= \cup_{y \in S^{n-1}} \tau_k(D_p^-) \cap \phi(W_y), \\
	\label{eqn spinning of neck}
	\tau_k(N_p) &= \cup_{y \in S^{n-1}} (\phi \circ \tilde{\tau} \circ \phi^{-1}) (N_p \cap \phi(W_y)) \\
	\nonumber &= \cup_{y \in S^{n-1}} \phi(\tilde{\tau}|_{W_y}(\phi^{-1}(N_p)\cap W_y))\\
	\nonumber &= \cup_{y \in S^{n-1}} \tau_k(N_p) \cap \phi(W_y).
	\end{align}
	The restriction $\tilde{\tau}|_{W_y}$ is a Dehn twist on $W_y \simeq T^*S^1$ along the zero section. 
	Thus, we obtain Figure \ref{figure for lemma 1}, which represents intersections $\phi(W_y) \cap D_p^-, \phi(W_y) \cap N_p, \phi(W_y) \cap \tau_k (D_p^-)$, and $\phi(W_y) \cap \tau_k(N_p)$.
	Equation \eqref{eqn spinning of neck} and Figure \ref{figure for lemma 1} imply that $\tau_k(N_p)$ is carried by $N_p \cup (\alpha_k - D_p^+)$ and $\tau_k(D_p^-)$ is strongly carried by $N_p \cup (\alpha_k - D_p^+)$.  
	\begin{figure}[h]
		\centering
		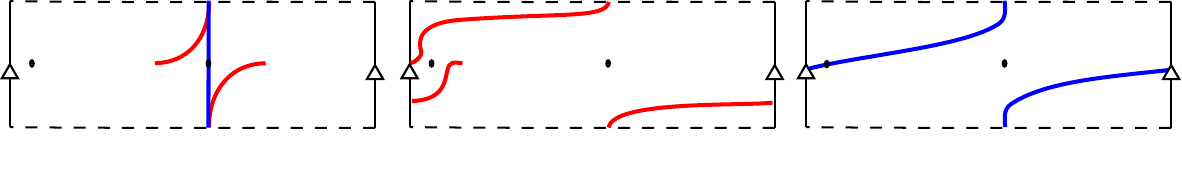		
		\caption{In the left picture, the blue curve represents $D_p^-$ and the red curve represents $N_p$; in the middle picture, the red curve represents $\tau_k(N_p)$; and in the right picture, the blue curve represents $\tau_k(D_p^-)$.}
		\label{figure for lemma 1}
	\end{figure}
 
	Then, $(i)$--$(iv)$ and Equation \eqref{eqn decomposition} prove that $\tau_k(\mathcal{B}) $ is carried by $F_{\tau_k}(\mathcal{B})$.
\end{proof}

\begin{lemma}
	\label{lem2}
	If $L$ is a Lagrangian submanifold which is carried by (resp.\ weakly carried by) $\mathcal{B} \in \mathbb{B}$, then $\tau_k(L)$ is carried (resp.\ weakly carried) by $F_{\tau_k}(\mathcal{B})$. 
	The case of $\sigma_j^{-1}$ is analogous.
\end{lemma}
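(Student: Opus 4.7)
The plan is to mirror the case-by-case analysis in the proof of Lemma~\ref{lem1}, verifying fiber-wise transversality and the real-blow-up condition for $\tau_k(L)$ in place of $\tau_k(\mathcal{B})$. Using the decomposition \eqref{eqn decomposition} of $\mathcal{B}$, first decompose $L$ as
$$L = \bigcup_X L_X, \qquad L_X := L \cap \pi^{-1}(X^*),$$
where $X$ ranges over the pieces $\alpha_i'$, $\beta_j'$, $N_p$, $\bar{D}_p^{\pm}$, $D_p$ appearing in \eqref{eqn decomposition}. The claim is that, for an appropriate choice of the fibered neighborhood $N(F_{\tau_k}(\mathcal{B}))$, each $\tau_k(L_X)$ lies in $N(F_{\tau_k}(\mathcal{B}))$ and inherits from $L_X$ the required singular structure.

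For pieces $X$ falling under cases $(i)$--$(iii)$ of Lemma~\ref{lem1}, the map $\tau_k$ acts either as the identity, as the antipodal map up to symplectic conjugation, or by interchanging $D_p^+$ with $\bar{D}_p^+$. In each of these cases one may take the fibered neighborhood of $F_{\tau_k}(\mathcal{B})$ near $\tau_k(X)$ to be precisely the $\tau_k$-image of the fibered neighborhood of $\mathcal{B}$ near $X$, since $\tau_k$ is a symplectomorphism sending Lagrangian fibers to Lagrangian fibers. This sends $L_X$ onto a Lagrangian with identical transversality and singular behavior.

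The crucial case is $(iv)$: $X = D_p^-$ or $N_p$ with $p \in \alpha_k \cap \beta_j$. Here I would arrange $N(F_{\tau_k}(\mathcal{B}))$ near $N_p \cup (\alpha_k \setminus D_p^+)$ so that its fibers respect the spinning decomposition $\bigcup_{y \in S^{n-1}} \phi(W_y)$ used in Lemma~\ref{lem1}. Then
$$\tau_k(L_X) \cap \phi(W_y) = (\phi \circ \tilde\tau \circ \phi^{-1})\bigl(L_X \cap \phi(W_y)\bigr),$$
reducing everything to a two-dimensional problem in each slice $W_y \simeq T^*S^1$. Each $L_X \cap \phi(W_y)$ is a carried one-dimensional Lagrangian in a fibered neighborhood of the train track $\mathcal{B} \cap \phi(W_y)$, and the content of Figure~\ref{figure for lemma 1}, applied fiberwise, shows that its $\tilde\tau$-image is carried by $F_{\tau_k}(\mathcal{B}) \cap \phi(W_y)$. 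Taking the union over $y \in S^{n-1}$ gives the desired carrying relation for $\tau_k(L_X)$.

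The main obstacle is to check that every \emph{new} singular component of $\pi \circ \tau_k|_L$ introduced by spinning is of real blow-up type, which is the only genuine difference between the carried and weakly-carried statements. To verify this, I would identify a Darboux neighborhood of such a spun tangency point of $\tau_k(L)$ with the local model $(M_*,\mathcal{B}_*,V_*,\pi_*)$ of Example~\ref{exmp of simplest singularity}: up to a symplectomorphism, the union over $y \in S^{n-1}$ of the single transverse intersections of $L_X \cap \phi(W_y)$ with a central fiber at $p$ produces exactly the Lagrangian $L_*$ with singular locus $V_*$, matching Definition~\ref{def of real blow-up type}. The weakly-carried conclusion then follows immediately, as spinning contributes only one new singular value per plumbing point on $\alpha_k$ and preserves the minimally-singular condition on each $\phi(W_y)$ slice.
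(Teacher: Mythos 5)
Your overall strategy --- put $L$ in a small fibered neighborhood of $\mathcal{B}$, decompose it according to Equation \eqref{eqn decomposition}, and rerun the case analysis of Lemma \ref{lem1} --- is exactly what the paper does: its entire proof of this lemma is ``assume $L$ lies in an arbitrarily small neighborhood of $\mathcal{B}$ and apply $\tau_k$ as in Lemma \ref{lem1}.'' Your handling of cases $(i)$--$(iii)$ is fine.

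Your execution of case $(iv)$, however, has a genuine gap. You reduce $\tau_k(L_X)$, for $X = D_p^-$ or $N_p$, to the slices $L_X \cap \phi(W_y)$ and then ``take the union over $y \in S^{n-1}$.'' This works for $\mathcal{B}$ itself because $D_p^-$ and $N_p$ are spun, i.e.\ contained by construction in $\bigcup_{y \in S^{n-1}} \phi(W_y)$. But that union is only $(n+1)$-dimensional inside the $2n$-dimensional $T^*\alpha_k$, so for $n \geq 2$ a nearby carried Lagrangian is \emph{not} contained in it: generically $L_X \cap \bigcup_{y} \phi(W_y)$ has dimension $n + (n+1) - 2n = 1$, so the union of the slices recovers only a one-dimensional subset of the $n$-dimensional $\tau_k(L_X)$ and says nothing about the rest. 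In particular your verification that the new singular components are of real blow-up type --- which you correctly identify as the only genuinely new point beyond Lemma \ref{lem1}, and which is exactly where ``carried'' differs from ``weakly carried'' --- does not go through as written, since it is carried out only on the slices. A repair would instead write $L_X$ as a union of graphs of small exact $1$-forms over $D_p^-$ (resp.\ $N_p$), note that $\tau_k(L_X)$ is then $C^1$-close to $\tau_k(D_p^-)$ (resp.\ $\tau_k(N_p)$), invoke openness of transversality to the fibers of $N(F_{\tau_k}(\mathcal{B}))$ away from the spun singular sphere, and then perform a separate local analysis in the model of Example \ref{exmp of simplest singularity} near that sphere to confirm the real blow-up condition for the perturbed Lagrangian rather than for the spun one.
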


\begin{proof}
	We can assume that $L$ is contained in an arbitrary small neighborhood of $\mathcal{B}$.
	Then, we apply a Dehn twist $\tau_k$ as we did in the proof of Lemma \ref{lem1}.
	The details are similar to the proof of Lemma \ref{lem1}.	
\end{proof}

\begin{proof}[Proof of Theorem \ref{branched surface thm}]
	Let $\psi: M \stackrel{\sim}{\to} M$ be a symplectic automorphism of generalized Penner type.
	Then, we can write $\psi = \delta_1 \circ \cdots \circ \delta_l$ where $\delta_k$ is a Dehn twist $\tau_i$ or $\sigma_j^{-1}$.
	We then define $F_{\psi} = F_{\delta_1} \circ \cdots \circ F_{\delta_l}: \mathbb{B} \to \mathbb{B}$.
	By Lemma \ref{lem1}, we have specific functions $F_{\tau_i}$ and $F_{\sigma_j^{-1}}$ acting on $\mathbb{B}$. 

	We claim that $F_{\psi}$ is a constant map, i.e., $\operatorname{Im}(F_{\psi})$ is a point $\mathcal{B}_{\psi}$, which we define as follows: in Equation \eqref{eqn definition of lbs}, we set $D_p = D_p^+$ for $p \in \alpha_i \cap \beta_j$ if the last $\tau_i$ in $\psi$ appears later than the last $\sigma_j^{-1}$, and $D_p = D_p^-$ otherwise.
	Note that every Dehn twist $\tau_i$ and $\sigma_j^{-1}$ appears in $\psi$, thus $\mathcal{B}_{\psi}$ is well-defined.
	By Equation \eqref{eqn define F_}, $F_{\psi}(\mathcal{B}) = \mathcal{B}_{\psi}$ for all $\mathcal{B} \in \mathbb{B}$.
\end{proof}

\begin{remark}
	\label{rmk position of singular value}
	$\mbox{}$
	\begin{enumerate}
		\item Note that a singular value of $\pi : \psi^m(L) \to \mathcal{B}^*$, which is defined in Section 3.1, can be moved by isotoping $\psi^m(L)$.
		\item We observe that every singular value of $\pi:\psi^m(\mathcal{B}_{\psi}) \to \mathcal{B}^*$ lies near $\pi(p), \pi\big(\tau_i(p)\big),$ or $\pi\big(\sigma_j^{-1}(p)\big)$ by isotoping, where $p$ is a plumbing point. 
		More precisely, let $S_{p, \mathcal{B}_{\psi}}^+$ (resp.\ $S_{p, \mathcal{B}_{\psi}}^-$) be the sector of $\mathcal{B}_{\psi}^*$ containing $\pi(p)$ if $D_p = D_p^+$ (resp.\ $D_p^-$), where $D_p$, $D_p^+$ and $D_p^-$ are defined in Section 3.4.
		Similarly, let $\bar{S}_{p, \mathcal{B}_{\psi}}^{\pm}$ be $\pi(\bar{D}_p^{\pm})$, where $\bar{D}_p^{\pm}$ is defined in Equation \eqref{eqn definition of disks and primes}. 
		Then, by isotoping $\psi^n(\mathcal{B}_{\psi})$, every singular value of $\pi:\psi^n(\mathcal{B}_{\psi}) \to \mathcal{B}^*$ lies in the interiors of $S_{p, \mathcal{B}_{\psi}}^{\pm}$ or $\bar{S}_{p, \mathcal{B}_{\psi}}^{\pm}$ for some plumbing point $p$. 
		
		For convenience, let the {\em centers} of $S_p^\pm$, $\bar{S}_p^+$, $\bar{S}_p^-$ be $p$, $\tau(p)$, $\sigma^{-1}(p)$ respectively. 
		Then, the singular values in $S_p^\pm$, $\bar{S}_p^\pm$ lie near the centers of them. 
		Moreover, $S_{p, \mathcal{B}_{\psi}}^{\pm}$ and $\bar{S}_{p, \mathcal{B}_{\psi}}^{\pm}$ will be simply called $S_p^{\pm}$ and $\bar{S}_p^{\pm}$. 
	\end{enumerate}
\end{remark}

\section{Construction of Lagrangian laminations}
In this section, we will prove Theorems \ref{lamination thm} and \ref{generalized theorem}..

\subsection{Singular and regular disks}
In order to prove Theorems \ref{lamination thm} and \ref{generalized theorem}, we would like to construct a stable Lagrangian lamination $\mathcal{L}$ of a symplectic automorphism $\psi$ from a Lagrangian branched submanifold $\mathcal{B}_{\psi}$. 
One of the difficulties is that singular components occur naturally.
In order to control the singularities, we introduce singular and regular disks.

In general, we assume that $\mathcal{B}_{\psi}^*$, the associated branched manifold, can be decomposed into the union of a finite number of disks $S_i \simeq \mathbb{D}^n$, which are called {\em singular disks}, and $R_j \simeq \mathbb{D}^n$, which are called {\em regular disks}, i.e.,
\begin{gather}
\label{eqn decomp into singular/regular disks}
\mathcal{B}_{\psi}^* = \bigcup_i S_i \cup \bigcup_j R_j,
\end{gather} 
such that
\begin{enumerate}
	\item each singular disk $S_i$ is either a closed disk contained in the interior of a sector of $\mathcal{B}_{\psi}^*$ or a closure of a sector,
	\item $S_i \cap S_j = \varnothing$ for any $i \neq j$,
	\item every singular value of $\pi: \psi^m(\mathcal{B}_{\psi}) \to \mathcal{B}_{\psi}$ after weakly fibered isotopy lies in $\cup_i \mathring{S}_i$ for all $m \in \mathbb{N}$, where $\mathring{S}_i$ is the interior of $S_i$,
	\item each regular disk $R_j$ is obtained by cutting up a closure of a sector minus $\cup_i \mathring{S_i}$,
	\item $S_i$ and $R_j$ (resp.\ $R_i$ and $R_j$ for $i \neq j$) meet only along their boundaries.  
\end{enumerate}

\begin{remark}
	\label{rmk singular value condition}
From now on, for any compact Lagrangian submanifold $L$ which is carried by $\mathcal{B}_{\psi}$, we will assume that every singular value of $\pi : L \to \mathcal{B}_{\psi}$ lies in the interior of a singular disk by Remark \ref{rmk position of singular value}.
\end{remark}

If $\mathcal{B}^*$ admits Equation \eqref{eqn decomp into singular/regular disks}, then one obtains a decomposition of $N(\mathcal{B})$ as follows:
\begin{gather*}
N(\mathcal{B}) = \bigcup_i \pi^{-1}(S_i) \cup \bigcup_j \pi^{-1}(R_j).
\end{gather*}

In Section 4.2, we will define braids $b(L,S_i)$ for a Lagrangian $L$, which is carried by $\mathcal{B}_{\psi}$, and a singular disk $S_i$. 
By Theorem \ref{branched surface thm}, there exist sequences of braids ${b(\psi^m(L),S_i)}_{m \i \mathbb{N}}$, and we will construct limits of those braid sequences as $m \to \infty$.
We then extend the limit lamination to a Lagrangian lamination of $\pi^{-1}(S_i)$ in Section 4.3, and a Lagrangian lamination of $\pi^{-1}(R_j)$ in Section 4.4.

\begin{remark}
	\label{rmk identify with cotangent bundle of disk}
	\mbox{}
	\begin{enumerate}
		\item In Section 4.3 (resp.\ Section 4.4), we will construct a Lagrangian lamination on  $\overline{\pi^{-1}(\mathring{S}_i)} \subset \pi^{-1}(S_i)$ (resp.\ $\overline{\pi^{-1}(\mathring{R}_j)} \subset \pi^{-1}(R_j)$),  the closure of $\pi^{-1}(\mathring{S}_i)$.
		This is because $\pi^{-1}(S_i)$ (resp.\ $\pi^{-1}(R_j)$) is not a (closed) submanifold of $M$ if $S_i$ (resp.\ $R_j$) intersects the branch locus of $\mathcal{B}^*$. 
		
		Figure \ref{fig abstract branched manifold} is an example. 
		If $S_1$ in Figure \ref{fig abstract branched manifold} is a singular disk, then $\pi^{-1}(S_1)$ is the union of the red box in Figure \ref{fig abstract branched manifold} (a) and $F_x$.

	\item 
	We note that $(\overline{\pi^{-1}(\mathring{S}_i)}, \omega)$ (resp.\ $(\overline{\pi^{-1}(\mathring{R}_j)}, \omega)$) and $(DT^*\mathcal{D}, \omega_0)$ are symplectomorphic to each other, where $\mathcal{D}$ is a closed disk, $DT^*\mathcal{D}$ is a disk cotangent bundle of $\mathcal{D}$, and $\omega_0$ is the standard symplectic form of the cotangent bundle. 
	
	In order to construct a symplectomorphism, we will consider the following:
	Let $\mathcal{D}$ be a largest Lagrangian disk such that 
	$$\mathcal{D} \subset \pi^{-1}(S_i) \cap \mathcal{B} \hspace{0.2em} (\text{resp. } \pi^{-1}(R_j) \cap \mathcal{B}) \text{  and  } \pi(\mathcal{D}) = S_i \hspace{0.2em} (\text{resp. } R_j).$$
	
	By Remark \ref{rmk natural embedding}, there exists a symplectic embedding $i_{\mathcal{D}}:\mathcal{N}(\mathcal{D}) \hookrightarrow M$. 
	It is easy to construct a vector field on $i_{\mathcal{D}}(\mathcal{N}(\mathcal{D}))$, whose (time 1) flow moves $i_{\mathcal{D}}(\mathcal{N}(\mathcal{D}) \cap T_p^*\mathcal{D})$ to $F_{\pi(p)}$ for any $p \in \operatorname{Int}(\mathcal{D})$. 
	Moreover, the vector field is a symplectic vector field, i.e., the flow is a symplectomorphism, and $$\cup_{p \in \operatorname{Int}(\mathcal{D})}i_{\mathcal{D}}(\mathcal{N}(\mathcal{D}) \cap T^*_p \mathcal{D}) \simeq \cup_{p \in \operatorname{Int}(\mathcal{D})} F_{\pi(p)} = \pi^{-1}(\mathring{S}_i) (\text{resp.  } \pi^{-1}(\mathring{R}_j)).$$
	
	By taking the closures, $i_{\mathcal{D}}(\mathcal{N}(\mathcal{D})) \simeq \overline{\pi^{-1}(\mathring{S}_i)}$ (resp.\ $\overline{\pi^{-1}(\mathring{R}_j)}$). 
	Moreover, $\mathcal{N}(\mathcal{D})$ is symplectomorphic to $DT^*\mathcal{D}$.
	Thus, $DT^*\mathcal{D}$ and $\overline{\pi^{-1}(\mathring{S}_i)}$  (resp.\ $\overline{\pi^{-1}(\mathring{R}_j)}$) are symplectomorphic.
	\end{enumerate}
\end{remark}

From now on, we assume that a symplectic automorphism $\psi$ is of generalized Penner type until the end of Section 4.3. 
\vskip0.2in

\noindent{\em Decomposition of $\mathcal{B}^*_{\psi}$ for $\psi$ of generalized Penner type.}
We will now explain how to decompose $\mathcal{B}^*$, the associated branched manifold of $\mathcal{B} \in \mathbb{B}$, into the union of specific singular and regular disks. 
Note that $\mathbb{B}$ is defined in Section 3.4.

By Remark \ref{rmk position of singular value}, after weakly fiber isotoping, every singular value of $\pi:\psi^m(\mathcal{B}) \to \mathcal{B}^*$ lies in the interior of $S_p$ or $\bar{S}_p^\pm$, where $S_p = S_P^+$ if $D_p= D_P^+$ and $S_p = S_p^-$ if $D_p = D_p^-$.
Let $S_p$ and $\bar{S}_p^\pm$ be the specific singular disks of $\mathcal{B}^*$.

We will divide the complement of singular disks from $\mathcal{B}^*$, i.e.,
\begin{gather}
\label{eqn regular part}
\mathcal{B}^* \setminus \big(\cup_p S_p \sqcup \cup_p \bar{S}_p^+ \sqcup \cup_p \bar{S}_p^- \big),
\end{gather}
into regular disks.
In order to do this, we use a symplectic submanifold $W^{2n-2} \subset M^{2n}$, which is defined as follows: 
For each $\alpha_i$ (resp.\ $\beta_j$), there is an equator $C_{\alpha_i}$ (resp.\ $C_{\beta_j}$) $\simeq S^{n-1}$ such that
\begin{enumerate}
	\item for any plumbing point $p \in \alpha_i$ (resp.\ $\beta_j$), $p$ lies on $C_{\alpha_i}$ (resp.\ $C_{\beta_j}$),
	\item if $p \in \alpha_i \cap \beta_j$, then $T^*C_{\alpha_i} \equiv T^*C_{\beta_j}$ near $p$.  
\end{enumerate}
Note that the equators on a Lagrangian sphere $\alpha_i$ (resp.\ $\beta_j$) are defined using an identification $\phi_{\alpha_i} : \alpha_i \stackrel{\sim}{\to} S^n$ (resp.\ $\phi_{\beta_j}: \beta_j \stackrel{\sim}{\to} S^n$).
Thus, by choosing proper identification $\phi_{\alpha_i}$ and $\phi_{\beta_j}$, we can assume the existence of $C_{\alpha_i}$ and $C_{\beta_j}$.
Then,
$$W:= \cup_{i} T^*C_{\alpha_i} \bigcup \cup_{j}T^*C_{\beta_j}$$
is a $(2n-2)$-dimensional symplectic submanifold of $M$.

We cut \eqref{eqn regular part} along $\pi(W)$.
These are the regular disks $R_k$.
Each $R_k$ is a manifold with corners, where the corners are at $R_k \cap \pi(W) \cap S_l$.

\subsection{Braids}
Consider the decomposition of $\mathcal{B}_{\psi}^*$ into specific singular and regular disks as in the previous subsection.
In this subsection, for a given compact Lagrangian submanifold $L$ which is carried by $\mathcal{B}_{\psi}$, we define a sequence of braids $b(\psi^m(L), S_i)$ corresponding to $\psi^m(L)$ over the boundary of each singular disk $S_i$ of $\mathcal{B}_{\psi}^*$.
Lemma \ref{lem3} gives an inductive description of the sequences $b(\psi^m(L), S_i)$. 
We will end this subsection by constructing limits of $b(\psi^m(L), S_i)$ as $m \to \infty$. 
 
For a singular disk $S, \pi^{-1}(\partial S) = \cup_{p \in \partial S} F_p$ is a $\mathbb{D}^n$-bundle over $\partial S \simeq S^{n-1}$. 
Note that we use $\mathbb{D}^n$ to indicate a closed disk, and we will use $\mathring{\mathbb{D}}^n$ to indicate an open disk.
Let $\varphi : \pi^{-1}(\partial S) \stackrel{\sim}{\to} S^{n-1} \times \mathbb{D}^n$ be a bundle map.
If $L$ is a Lagrangian submanifold which is carried by $\mathcal{B}_{\psi}$, then, for all $p \in \partial S$, $\varphi(L \cap F_p)$ is a finite collection of isolated points in $\mathbb{D}^n$; recall that $\pi : L \to \mathcal{B}^*$ has no singular value on $\partial S$.
Thus, $\varphi(L \cap \pi^{-1}(\partial S))$ can be identified with a map from $\partial S \simeq S^{n-1}$ to the configuration space $\operatorname{Conf}_{l}(\mathbb{D}^n)$ of $l$ points on $\mathbb{D}^n$ where $l = l(L,S)$, i.e., a braid.

We explained that $L \cap \pi^{-1}(\partial S)$ could be identified with a braid. 
Since $L$ is a Lagrangian submanifold of $M$, the braid corresponding to $L \cap \pi^{-1}(\partial S)$ satisfies a symplectic property.
The symplectic property is the following: 
For the bundle map $\varphi: \pi^{-1}(\partial S) \stackrel{\sim}{\to} S^{n-1} \times \mathbb{D}^n, (\varphi^{-1})^*(\omega)$ is a 2-form on $S^{n-1} \times \mathbb{D}^n$ such that $(\varphi^{-1})^*(\omega)$ is zero on $\varphi \big(L \cap \pi^{-1}(\partial S) \big)$.

From now on, we will define the braids on the boundary of a singular disk $S$.  
Let $f : S^{n-1} \to \operatorname{Conf}_{l}(\mathbb{D}^n)$ for some $l$. 
In other words, there are maps 
$$f_1, \cdots, f_{l} : S^{n-1} \to \mathbb{D}^n,$$ 
such that $f(p) = \{f_1(p), \cdots, f_{l}(p)\}$ as $f_i(p) \neq f_j(p)$ for all $i \neq j$.
We define 
\begin{align*}
B(f) := \{ (p,& f_i(p)) \in S^{n-1} \times \mathbb{D}^n \hspace{0.2em} | \hspace{0.2em} i \in \{1, \cdots, \ell\} \},\\
\tilde{Br}_{\partial S}:= \{ \varphi^{-1}\big(B(&f)\big) \hspace{0.2em} | \hspace{0.2em} f: S^{n-1} \to \operatorname{Conf}_{l}(\mathbb{D}^n) \text{ for some } l \text{  such that},\\
& (\varphi^{-1})^*(\omega) \text{  is a zero on  } B(f) \}.
\end{align*}
Note that $\tilde{Br}_{\partial S}$ is a set of closed subsets of $\pi^{-1}(\partial S)$ and independent of $\varphi$. 

We define an equivalence relation on $\tilde{Br}_{\partial S}$ as follows:
$b_0 \sim b_1$ for $b_i \in \tilde{Br}_{\partial S}$ if there exists a smooth 1-parameter family $b_t \in \tilde{Br}_{\partial S}$ connecting $b_0$ and $b_1$.
Let $Br_{\partial S} := \tilde{Br}_{\partial S}/\sim$. 

\begin{definition}
	Let $\mathcal{B} \in \mathbb{B}$ and let $S$ be a singular disk of $\mathcal{B}$.
	If $L$ is a Lagrangian submanifold which is carried by $\mathcal{B}$,
	then the {\em braid $b(L,S)$} of $L$ on $S$ is the braid isotopy class of $Br_{\partial S}$ which is given by
	$$ b(L,S) = \big[L \cap \pi^{-1}(\partial S)\big] \in Br_{\partial S}.$$
\end{definition} 
Recall that $\mathbb{B}$ is a set of Lagrangian branched submanifold defined in Section 3.4 and for any $\mathcal{B} \in \mathbb{B}$, we decompose $\mathcal{B}$ into the union of specific singular disks and regular disks, introduced in Section 4.1.

\begin{lemma}
	\label{lem3}
	Let $L$ be a Lagrangian submanifold of $M$ which is carried by $\mathcal{B}$.
	For a given singular disk $S$ of $F_{\tau_i}(\mathcal{B})$ (resp.\ $F_{\sigma_j^{-1}}(\mathcal{B})$), there exist maps $f_k$ from $\tilde{Br}_{S_{i_k}}$ to $\tilde{Br}_{S}$, where $S_{i_k}$ is a singular disk of $\mathcal{B}$, and there exist closed sets $\mathring{b}_{i_k} \in \tilde{Br}_{S_{i_k}}$, such that $b(\tau_i(L),S)$ (resp.\ $b(\sigma_j^{-1}(L),S)$) is $\big[\bigsqcup_k f_{k}(\mathring{b}_{i_k})\big] \in Br_{\partial S}$.
\end{lemma}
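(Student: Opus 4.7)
The plan is to trace how the Dehn twist $\tau_i$ transports pieces of $L$ along the boundary fiber bundle of each singular disk of $F_{\tau_i}(\mathcal{B})$, using the ``type by type'' description of $F_{\tau_i}(\mathcal{B})$ established in Lemma \ref{lem1}. The key observation is that $\tau_i$ is a symplectomorphism of $M$ supported in a small neighborhood of $\alpha_i$, so after possibly shrinking the fibered neighborhoods $N(\mathcal{B})$ and $N(F_{\tau_i}(\mathcal{B}))$ it induces an injective correspondence between them. In particular, for any singular disk $S$ of $F_{\tau_i}(\mathcal{B})$ the preimage $\tau_i^{-1}(\pi^{-1}(\partial S))$ is a closed subset of $N(\mathcal{B})$ fibered over $\partial S$.

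First I would decompose $\tau_i^{-1}(\pi^{-1}(\partial S))$ into finitely many pieces $E_1, \ldots, E_r$, each contained in $\pi^{-1}(S_{i_k})$ for some singular disk $S_{i_k}$ of $\mathcal{B}$. For regions not affected by the spinning in the proof of Lemma \ref{lem1} (cases (i)--(iii) there), $S$ coincides with a singular disk of $\mathcal{B}$ and this decomposition is tautological: the corresponding piece $E_k$ is a portion of $\pi^{-1}(\partial S_{i_k})$ and $\tau_i$ restricts to a symplectic identification. For the spinning case (item (iv)), the boundary of a disk such as $S_p^+$ or $\bar{S}_p^+$ of $F_{\tau_i}(\mathcal{B})$ is split along the image of the equator $C_{\alpha_i} \subset W$ into sectors, each of which $\tau_i^{-1}$ carries into the boundary fibered neighborhood of some $S_{i_k}$, $\bar{S}_{i_k}^{\pm}$, or across a regular disk of $\mathcal{B}$ where $L$ nevertheless has no singular value and can be isotoped to a strand lying over some $\partial S_{i_k}$.

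Next I would define $f_k : \tilde{Br}_{S_{i_k}} \to \tilde{Br}_S$ as follows: given $b \in \tilde{Br}_{S_{i_k}}$, restrict the closed subset $b \subset \pi^{-1}(\partial S_{i_k})$ to $b \cap E_k$, apply $\tau_i$, and reparametrize the result as a braid over $\partial S$ using a bundle trivialization $\varphi$ of $\pi^{-1}(\partial S) \simeq S^{n-1} \times \mathbb{D}^n$. Because $\tau_i^*\omega = \omega$, the symplectic vanishing condition $(\varphi^{-1})^*\omega|_{B(f)} = 0$ is preserved, so $f_k(b) \in \tilde{Br}_S$. Setting $\mathring{b}_{i_k}$ to be the closed subset of $L \cap \pi^{-1}(\partial S_{i_k})$ that lies over $E_k \cap \pi^{-1}(\partial S_{i_k})$ (extended by the fiber identification to an element of $\tilde{Br}_{S_{i_k}}$), one verifies the set-theoretic equality
\begin{equation*}
\tau_i(L) \cap \pi^{-1}(\partial S) \;=\; \bigsqcup_k f_k(\mathring{b}_{i_k}),
\end{equation*}
which gives the braid identity $b(\tau_i(L),S) = \bigl[\bigsqcup_k f_k(\mathring{b}_{i_k})\bigr]$. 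The argument for $\sigma_j^{-1}$ is identical, after swapping positive with negative disks.

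The main obstacle will be the spinning case: a single singular disk $S$ of $F_{\tau_i}(\mathcal{B})$ may receive contributions from several singular disks $S_{i_k}$ of $\mathcal{B}$, and one must verify that the pieces $f_k(\mathring{b}_{i_k})$ fit together compatibly along the boundaries $E_k \cap E_{k'}$ to assemble into a single closed subset of $\pi^{-1}(\partial S)$ whose projection to $\operatorname{Conf}_l(\mathbb{D}^n)$ is continuous. This continuity follows from the smoothness of $\tau_i$ together with the spinning formula computed in Equations \eqref{eqn spinning of disk}--\eqref{eqn spinning of neck}, but care is needed to confirm that the strand count $l$ is constant along $\partial S$, which is guaranteed by the minimally-singular assumption on $L$ together with Remark \ref{rmk position of singular value} which places all singular values in the interior of singular disks.
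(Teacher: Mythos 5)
Your overall strategy --- tracking how the Dehn twist transports the braid data of $L$ from the boundary fibrations of singular disks of $\mathcal{B}$ to those of $F_{\tau_i}(\mathcal{B})$, case by case following Lemma \ref{lem1} --- is the same as the paper's. But there is a genuine gap in your construction of the maps $f_k$. You decompose $\tau_i^{-1}\big(\pi^{-1}(\partial S)\big)$ into pieces $E_k \subset \pi^{-1}(S_{i_k})$ and define $f_k(b)$ by restricting $b \subset \pi^{-1}(\partial S_{i_k})$ to $b \cap E_k$ and applying $\tau_i$. This does not produce a braid over $\partial S$: the hypersurface $E_k$ is in general transverse to the hypersurface $\pi^{-1}(\partial S_{i_k})$ inside $\pi^{-1}(S_{i_k})$, so $b \cap E_k$ is a positive-codimension slice of the braid rather than the braid itself. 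Equivalently, $\tau_i\big(\pi^{-1}(\partial S_{i_k})\big)$ does not land in $\pi^{-1}(\partial S)$, so the asserted set-theoretic equality $\tau_i(L) \cap \pi^{-1}(\partial S) = \bigsqcup_k f_k(\mathring{b}_{i_k})$ fails as stated. The paper repairs exactly this by (a) composing the Dehn twist with an auxiliary Hamiltonian isotopy $\Phi_t$ as in Equation \eqref{eqn Hamiltonian isotopy}, chosen so that a smaller disk $D_B \subset S_{i_k}$ satisfies $(\Phi_1 \circ \sigma_2^{-1})\big(\pi^{-1}(\partial D_B)\big) \subset \pi^{-1}(\partial S)$, and (b) using the absence of singular values of $\pi|_L$ over the annular region $S_{i_k} \setminus D_B$ to identify the braid class of $L \cap \pi^{-1}(\partial D_B)$ with $b(L,S_{i_k})$; only then does one push forward and obtain equality of classes in $Br_{\partial S}$. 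These corrective isotopies are not disposable bookkeeping: they are assembled into $\psi_H$ and reused throughout the limit construction in Lemma \ref{lem4}.

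A second omission: your enumeration of sources for the pieces (same center, antipodal center, or sectors swept across a regular disk by spinning) misses the contributions the paper calls maps of \emph{trivial type}, which occur whenever a sphere carries two or more plumbing points. In that case the twisted neck $\tau_i\big(\pi^{-1}(\pi(N_p))\big)$ is stretched across the sphere and meets $\pi^{-1}(S)$ for singular disks $S$ whose centers are neither $p$ nor its antipodal point, so $b(L,S_{i_k})$ contributes unbraided strands to $b(\tau_i(L),S)$ for such $S$. Your decomposition of $\tau_i^{-1}\big(\pi^{-1}(\partial S)\big)$ would in principle detect these pieces, but they do not fit your stated case analysis, and they are precisely the maps whose existence drives the convergence argument later in Section 4.
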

Recall the functions $F_{\tau_i}$ and $F_{\sigma_j^{-1}}$ in Lemma \ref{lem3} are defined in Lemma \ref{lem1}.

\begin{proof}[Proof of Lemma \ref{lem3}]
In Steps 1--3, we prove Lemma \ref{lem3} for a particular example; this is just for notational simplicity.
In Step 4, we briefly describe how to prove the general case.

The example we consider is the Lagrangian branched submanifold $\mathcal{B}_{\psi}$ in $M = P(\alpha, \beta_1, \beta_2)$, 
where $\alpha$ and $\beta_j$ are spheres such that $\alpha \cap \beta_1 = \{p\}$ and $\alpha \cap \beta_2 = \{q\}, \tau_0$ and $\sigma_j$ are Dehn twists along $\alpha$ and $\beta_j$, and $\psi = \tau_0 \circ \sigma_1^{-1} \circ \sigma_2^{-1}$.
Then, $\mathcal{B}_{\psi}$ is given by Theorem \ref{branched surface thm}. 

\vskip0.2in
\noindent{\em Step 1 (Notation).}
First, we will choose $\varphi:\pi^{-1}(\partial S) \stackrel{\sim}{\to} S^{n-1} \times \mathbb{D}^n$ for $S = S_p^\pm, S_q^\pm, \bar{S}_p^\pm$, and $\bar{S}_q^{\pm}$.
We will use $\varphi$ in the next steps. 

In order to construct $\varphi: \pi^{-1}(\partial S_p^+) \stackrel{\sim}{\to} S^{n-1} \times \mathbb{D}^n$, we observe that 
$$\pi^{-1}(S_p^+) \cap \mathcal{B} \subset D_p^+,$$
by Remark \ref{rmk identify with cotangent bundle of disk}. 
Moreover, we can assume that $\pi^{-1}(S_p^+) \subset i_{D_p^+}\big(\mathcal{N}(D_p^+)\big)$.
Note that $i_{D_p^+}$ and $\mathcal{N}(D_p^+)$ are defined in Remark \ref{rmk natural embedding}.
Thus, by choosing coordinate charts for $D_p^+$, one obtains $\varphi: \pi^{-1}(S_p^+) \stackrel{\sim}{\to} \mathbb{D}^n \times \mathbb{D}^n$.
By abuse of notation, the restriction $\varphi|_{\pi^{-1}(\partial S_p^+)} : \pi^{-1}(\partial S_p^+) \stackrel{\sim}{\to} S^{n-1} \times \mathbb{D}^n$ is simply called $\varphi$ again.  
Similarly, it is enough to choose coordinate charts for $D_p^-, D_q^\pm, \bar{D}_p^{\pm}, \bar{D}_q^{\pm}$, in order to fix $\varphi : \pi^{-1}(\partial S) \stackrel{\sim}{\to} S^{n-1} \times \mathbb{D}^n$ for $S = S_p^-, S_q^\pm, \bar{S}_p^\pm, \bar{S}_q^\pm$. 

In order to choose specific coordinate charts for $D_p^\pm, D_q^\pm, \bar{D}_p^\pm$, and $\bar{D}_q^\pm$, we use the $(2n-2)$-dimensional submanifold $W \subset M$ defined in Section 4.1. 
For convenience, we consider the lowest nontrivial dimension, i.e., $n=2$. 
For higher $n$, we can fix coordinate charts similarly.

Let $(x_1,x_2)$ be a coordinate chart on $D_p^+ \subset \alpha$ such that the $x_1$-axis agrees with $W \cap D_p^+$.
There are two choices for the positive $x_1$-direction corresponding to the two orientations of $W \cap D_p^+$, or equivalently orientations of $C_{\alpha}$.
We can choose either of them.
Then, let $(y_1,y_2)$ be an oriented chart on $D_p^-$ such that the $y_1$-axis agrees with $W \cap \beta_1$ and $\omega(\partial_{x_1}, \partial_{ y_1})>0$. 
The positive $y_1$-direction determines an orientation of $C_{\beta_1}$. 
On $\bar{D}_p^+$, there exists an oriented chart $(x_1,x_2)$ such that the positive $x_1$-direction agrees with the orientation of $C_{\alpha}$.  
For the other singular disks, we obtain oriented coordinate charts from the orientations of $C_{\alpha}, C_{\beta_i}, \alpha$ and $\beta_i$ in the same way.

Let $b_1 = b(L,S_p^+), b_2 = b(L,\bar{S}_p^+), b_3 = b(L,\bar{S}_p^-), b_4 = b(L,S_q^+ ), b_5 = b(L,\bar{S}_q^+)$, and $b_6 = b(L,\bar{S}_q^-)$, and let $\mathring{b}_i$ be a representative of $b_i$.  

The boundaries of $S_p^+$ is a component of the branch locus of $\mathcal{B}_{\psi}^*$.
By Remark \ref{rmk nonuniqueness of fibered neigbhrohood} (3), one can decompose $\mathring{b}_1$.
More precisely, in this case, Remark \ref{rmk nonuniqueness of fibered neigbhrohood} says that for any $x \in \partial S_p^+$, there are three sectors $S_0, S_1, S_2$ such that 
\begin{gather*}
x \in S_i \text{  for all  } i = 0, 1, 2, \\
F_x \cap \overline{\pi^{-1}(\mathring{S}_0)} = F_x \text{  and  } F_x \cap \overline{\pi^{-1}(\mathring{S}_i)} \subset \mathring{F}_x \text{  for  } i = 1, 2.
\end{gather*}
Moreover, it is easy to check that $S_p^+$ is either $S_1$ or $S_2$. 
Without loss of generality, let us label $S_1 = S_p^+$. 

If $L$ is carried by $\mathcal{B}$, we assume that $L \subset N(\mathcal{B})$.
Then, one obtains 
$$L \cap F_x \subset \big(F_x \cap \overline{\pi^{-1}(\mathring{S}_1)}\big) \cup \big(F_x \cap \overline{\pi^{-1}(\mathring{S}_2)}\big)$$ 
We decompose $\mathring{b}_1$ into $\mathring{b}_1 = \tilde{b}_1 \sqcup \bar{b}_1$, where $\tilde{b}_1 = \mathring{b}_1 \cap \overline{\pi^{-1}(\mathring{S}_1)}$ and $\bar{b}_1 = \mathring{b}_1 \cap \overline{\pi^{-1}(\mathring{S}_2)}$. 
The decomposition $\mathring{b}_4 = \bar{b}_4 \sqcup \tilde{b}_4$ is similar.
	
	We will explain the effects of $\sigma_2^{-1}$ on $\mathcal{B}_{\psi}$ in Step 2 and $\tau_0$ on $\mathcal{B}_{\psi}$ in Step 3.
	The effect of $\sigma_1^{-1}$ is similar to that of $\sigma_2^{-1}$.

	\vskip0.2in	
	\noindent{\em Step 2 (Effect of $\sigma_2^{-1}$ on $\mathcal{B}_{\psi}$).}
	In the rest of this paper, we make specific choices of $\tau_0$ and $\sigma_j$ which are given by Equation \eqref{eqn definition of generalized Dehn along L}, and $\tau : T^*S^2 \stackrel{\sim}{\to} T^*S^2$, which is defined in Remark \ref{rmk specific dehn twists}.  
	In other words, $\tau_0 = \phi_{\alpha} \circ \tau \circ \phi_{\alpha}^{-1}$ and $\sigma_j = \phi_{\beta_j} \circ \tau \circ \phi_{\beta_j}^{-1}$, where $\phi_{\alpha}$ (resp.\ $\phi_{\beta_j}$) is a symplectomorphism from $T^*S^2$ to a neighborhood of $\alpha$ (resp.\ $\beta_j$).
	The neighborhood of $\alpha$ (resp.\ $\beta_j$) will be denoted by $T^*\alpha$ (resp.\ $T^*\beta_j$). 
	
	\begin{remark}
		\label{rmk acts like antipodal}
		Recall that $\tau$ is a Dehn twist on $T^*S^n$ which agrees with the antipodal map 
		$$ T^*S^n \stackrel{\sim}{\to} T^*S^n, (u;v) \mapsto (-u;-v),$$
		on a neighborhood of the zero section $S^n$. 
	\end{remark}
	
	By Lemma \ref{lem2}, $\sigma_2^{-1}(L)$ is carried by $\mathcal{B}' = F_{\sigma_2^{-1}}(\mathcal{B}_{\psi})$.
	We label 
	\begin{gather*}
	b_1'=b(\sigma_2^{-1}(L),S_p^+), b_2'=b(\sigma_2^{-1}(L),\bar{S}_p^+), b_3'=b(\sigma_2^{-1}(L),\bar{S}_p^-),\\
	b_4'=b(\sigma_2^{-1}(L),S_q^-), b_5'=b(\sigma_2^{-1}(L),\bar{S}_q^+), b_6' = b(\sigma_2^{-1}(L),\bar{S}_q^-).
	\end{gather*}
	Note that the singular disk for $b_4$ is $S_q^+$ and the singular disk for $b_4'$ is $S_q^-$, i.e., two singular disks have the same center but different sign.
	However, for $i \neq 4$, the singular disks for $b_i$ and $b_i'$ have the same center and the same sign. 
	
	For convenience, the singular disk of $\mathcal{B}_{\psi}$ (resp.\ $F_{\sigma_2^{-1}}(\mathcal{B}_{\psi})$) will be called $S_i$ (resp.\ $S_i'$), so that $b_i$ (resp.\ $b'_i$) is a braid on $\pi^{-1}(\partial S_i)$ (resp.\ $\pi^{-1}(\partial S_i')$).
	Also, let $\varphi_i : \overline{\pi^{-1}(\mathring{S}_i)} \stackrel{\sim}{\to} \mathbb{D}^2 \times \mathring{\mathbb{D}}^2$ (resp.\ $\varphi_i' : \overline{\pi^{-1}(\mathring{S}'_i)} \stackrel{\sim}{\to} \mathbb{D}^2 \times \mathring{\mathbb{D}}^2$) be the identification which is fixed in Step 1.
	
	Since $\sigma_2^{-1}$ is supported on $T^*\beta_2$, a small neighborhood of $\beta_2$, $b_i$ and $b_i'$ are the same braid in $Br_{\partial S_i}$ for $i = 1, 2, 3$, and $5$. 
	We will explain how $b_6'$ is constructed.
	
	We can obtain $\sigma_2^{-1}(\mathcal{B}_{\psi})$ by spinning with respect to $q$ in $T^*\beta_2$, i.e., $\sigma_2^{-1}(\mathcal{B}_{\psi})$ is the union of curves in 2-dimensional submanifold $\phi_{\beta_2}(W_y)$ over $y \in S^1$.
	Recall that the spinning and $W_y$ are defined in Section 2.2. 
	\begin{figure}[h]
		\centering
		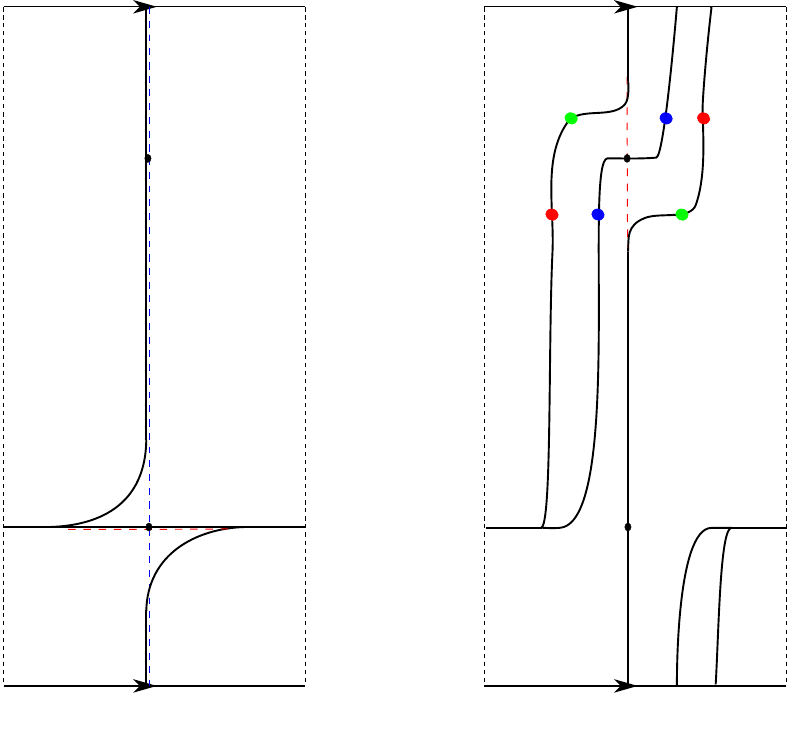
		\caption{The left picture represents $\mathcal{B}_{\psi} \cap \phi_{\beta_2}(W_y)$ and the right picture represents $\sigma_2^{-1}(\mathcal{B}_{\psi}) \cap \phi_{\beta_2}(W_y)$.}
		\label{RBG}
	\end{figure}

	Figure \ref{RBG} represents $\mathcal{B}_{\psi} \cap \phi_{\beta_2}(W_y)$ and $\sigma_2^{-1}(\mathcal{B}_{\psi}) \cap \phi_{\beta_2}(W_y)$ on $\phi_{\beta_2}(W_y)$.
	We obtain Figure \ref{RBG} because we choose specific $\sigma_2$.
	 
	By spinning blue, red, and green points in Figure \ref{RBG}, we obtain $\sigma_2^{-1}(\mathcal{B}_{\psi}) \cap \pi^{-1}(\partial S_6')$. 
	Let $B, R$, and $G$ be the circles obtained by spinning blue, red, and green points respectively.
	
	Since $N(\mathcal{B}_{\psi}) \supset \mathcal{B}_{\psi}$, 
	$\sigma_2^{-1}\big(N(\mathcal{B}_{\psi})\big) \cap \pi^{-1}(\partial S_6')$ is a neighborhood of $\sigma_2^{-1}(\mathcal{B}_{\psi}) \cap \pi^{-1}(\partial S_6')$.
	By assuming that $N(\mathcal{B}_{\psi})$ is a sufficiently small neighborhood of $\mathcal{B}_{\psi}$, $\sigma_2^{-1}\big(N(\mathcal{B}_{\psi})\big)  \cap \pi^{-1}(\partial S_6')$ consists of three connected components, which are neighborhoods of $B, R$, and $G$. 
	Each connected component will be called $N(B), N(R)$, and $N(G)$.  
	
	By definition, $b_6' = \big[\sigma_2^{-1}(L) \cap \pi^{-1}(\partial S_6') \big]$.
	Without loss of generality, we assume that $L \subset N(\mathcal{B}_{\psi})$. 
	Then, 
	$$\sigma_2^{-1}(L)  \cap \pi^{-1}(\partial S_6') \subset \sigma_2^{-1}\big(N(\mathcal{B}_{\psi})\big)  \cap \pi^{-1}(\partial S_6') = N(B) \sqcup N(R) \sqcup N(G).$$
	Thus, strands of $\sigma_2^{-1}(L) \cap \pi^{-1}(\partial S_6')$, or equivalently $b_6'$, are divided into three groups, which are contained in $N(B), N(R)$, and $N(G)$ respectively.
	We argue the group which is contained in $N(B)$ first.
	 
	The group of strands in $N(B)$ is given by $\sigma_2^{-1}(L) \cap N(B)$.
	Thus, we will consider $\sigma_2\big( \sigma_2^{-1}(L) \cap N(B) \big) = L \cap \sigma_2\big(N(B)\big)$.
	One of the main difficulties is that the action of $\sigma_2^{-1}$ on $\sigma_2\big(N(B)\big)$ is not simple. 
	To make it simpler, we will construct a Hamiltonian isotopy $\Phi_t$, so that there is a disk $D_B \subset S_q^+$ such that 
	$$(\Phi_1 \circ \sigma_2^{-1})\big(\pi^{-1}(\partial D_B) \big) \subset \pi^{-1}(\partial S_6').$$
	Then, $(\Phi_1 \circ \sigma_2^{-1})\big(\pi^{-1}(\partial D_B) \cap L\big)$ corresponds to the group of strands in $N(B)$.
	
	We construct $\Phi_t$ as follow:
	Let $H_t:\mathbb{R}^4 \to \mathbb{R}^4$ be a Hamiltonian isotopy given by
	\begin{align*}
	H_t = \begin{pmatrix}
	\cos t& 0 & -\sin t  &0 \\ 
	0& \cos t  &0  &- \sin t \\ 
	\sin t& 0 & \cos t & 0\\ 
	0& \sin t & 0 & \cos t
	\end{pmatrix},
	\end{align*} 
	and let $\delta : [0,\infty) \to \mathbb{R}$ be a smooth decreasing function such that $\delta(x) = \tfrac{\pi}{2}$ for all $x <1$ and $\delta(x)= 0$ for all $x > 2$. 
	We choose a neighborhood $U \subset \beta_2$ of $\sigma_2^{-1}(q)$ and a Darboux chart $\phi_q: T^*U \stackrel{\sim}{\to} \mathbb{R}^4$ such that $\phi_q(\sigma_2^{-1}(q))$ is the origin. 
	We remark that $T^*\beta_2$ denotes a neighborhood of $\beta_2$ in $M$, which is symplectomorphic to the cotangent bundle of $\beta_2$.
	Thus, for a subset $U$ of $\beta_2$, one can assume that $T^*U$ is a subset of $M$. 
	
	For convenience, let $\phi_q(x) = (x_1;x_2)$ where $x_i \in \mathbb{R}^2$.
	Then, there is a Hamiltonian isotopy 
	\begin{align}
	\label{eqn Hamiltonian isotopy}
	 \Phi_t(x) = \left\{\begin{matrix}
	(\phi_q^{-1} \circ H_{t \delta(c_1\|x_1\|+c_2\|x_2\|)} \circ \phi_q) (x) &\text{  if  } x \in T^*U, \\ 
	x \hspace{2em}  &\text{   if  } x \notin T^*U,
	\end{matrix}\right.
	\end{align}
	where $c_i$ is a positive constant and $\|\cdot \|$ is the standard norm on $\mathbb{R}^2$. 
	
	\begin{figure}[h]
		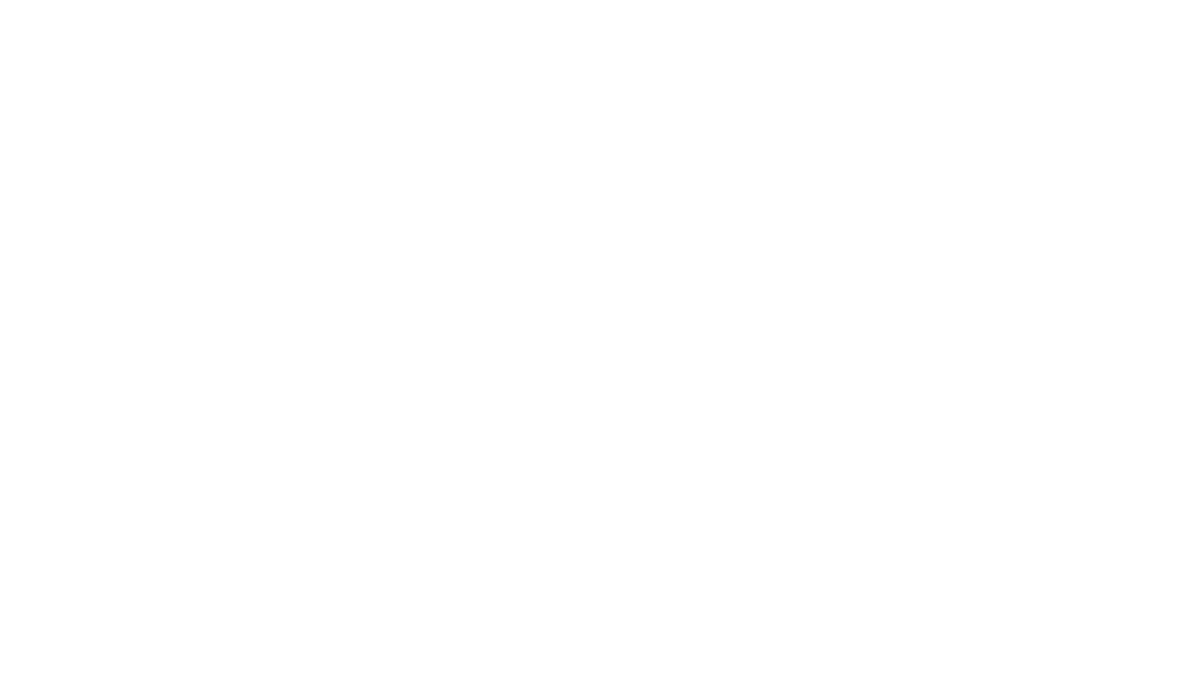
		\caption{The blue curves represent $\tilde{D}_B \cap \phi_{\beta_2}(W_y)$ in the left picture, $\sigma_2^{-1}(\tilde{D}_B) \cap \phi_{\beta_2}(W_y)$ in the middle picture, and $\Phi_1(\sigma_2^{-1}(\tilde{D}_B)) \cap \phi_{\beta_2}(W_y)$ in the right picture.}
		\label{afterhomotopy}
	\end{figure}
	To visualize, we use $D_q^+$ and $\bar{D}_q^-$ instead of $S_4$ and $S'_6$ in Figure \ref{afterhomotopy}.
	Figure \ref{afterhomotopy} represents $\phi_{\beta_2}(W_y) \cap D_q^+, \phi_{\beta_2}(W_y) \cap \sigma_2^{-1}(D_q^+)$ and $\phi_{\beta_2}(W_y) \cap \Phi_1(\sigma_2^-(D_q^+))$ in the left, middle, and right pictures respectively. 
	By choosing proper $c_i$, we obtain a small disk $D_B \subset S_q^+$ such that $(\Phi_1 \circ \sigma_2^{-1})\big(\pi^{-1}(\partial D_B) \big) \subset \pi^{-1}(\partial S_6')$. 
	More precisely, we obtain a disk $\tilde{D} \subset D_q^+$ which is in blue in the left of Figure \ref{afterhomotopy}. 
	Blue curves in the middle and right of Figure \ref{afterhomotopy} represent $(\pi \circ \sigma_2^{-1})\big(\tilde{D}_B\big)$ and $(\Phi_1 \circ \sigma_2^{-1})\big(\pi(\tilde{D}_B)\big)$.
	Then, $D_B$ is given by $D_B:= \pi(\tilde{D}_B)$. 

	On a small neighborhood of $D_B, \sigma_2^{-1}$ agrees with the antipodal map of $\phi_{\beta_2}(T^*\beta_2) \simeq T^*S^2$, as we mentioned in Remark \ref{rmk acts like antipodal}. 
	Then, we obtain a map
	\begin{gather*}
	f_1 : S^1 \times \mathring{\mathbb{D}}^2 \stackrel{\tilde{\varphi}_4^{-1}}{\simeq} \pi^{-1}(\partial D_B) \xrightarrow{\Phi_1 \circ \sigma_2^{-1}} \pi^{-1}(\partial S_6') \stackrel{\varphi_6'}{\simeq} S^1 \times \mathbb{D}^2,\\
	(\theta, x, y) \mapsto (\theta + \pi, -r_1x, -r_1y ).
	\end{gather*}
	The first identification $\tilde{\varphi}_4$ is the restriction of $\varphi_4: \pi^{-1}(S_4) \stackrel{\sim}{\to} \mathbb{D}^2 \times \mathbb{D}^2$. 
	\begin{remark}
		\label{rmk radius of solid torus}
		\mbox{}
		\begin{enumerate}
			\item Note that $\varphi_6'\big(\operatorname{Im}(f_1)\big) = (\Phi_1 \circ \sigma_2^{-1})\big(\pi^{-1}(\partial D_B)\big) \cap \pi^{-1}(\partial S_6')$. 
			Similarly, for the groups of strands in $N(R)$ and $N(G)$, one can obtain two functions $f_2$ and $f_3$ on $S^1 \times \mathbb{D}^2$ in the same way. 
			Then, the images $\operatorname{Im}(f_2)$ and $\operatorname{Im}(f_3)$ correspond to 
			$$(\Phi_1 \circ \sigma_2^{-1})\big(\pi^{-1}(\pi(N_q))\big) \cap \pi^{-1}(\partial S_6') \subset \pi^{-1}(\partial S_6') \stackrel{\varphi_6'}{\simeq} S^1 \times \mathbb{D}^2.$$ 
			Thus, $f_1$ explains the contribution of $\tilde{b}_4$, and $f_2$ and $f_3$ explain the contribution of $\bar{b}_4$ on the construction of $b_6'$.			
			\item The constant $r_1$ is determined by specific choices of an identification $\phi_{\beta_2}: T^*S^2 \stackrel{\sim}{\to} T^*\beta_2$, the fixed Dehn twist $\tau$ in Remark \ref{rmk specific dehn twists}, and so on. 
			However, $r_1$ has to be smaller than 1. 
			This is because $\operatorname{Im}(f_1), \operatorname{Im}(f_2)$, and $\operatorname{Im}(f_3)$ are mutually disjoint, since they corresponds to $N(B), N(R)$, and $N(G)$ respectively.
		\end{enumerate}

	\end{remark}
		
	The strands of $b'_6$ which are contained in $N(B)$ correspond to $$\varphi_6'^{-1}(f_1(\tilde{\varphi}_4(L \cap \pi^{-1}(\partial D_B)))).$$ 
	We will prove that $L \cap \pi^{-1}(\partial D_B)$ represents the same braid with $\tilde{b}_4$. 
	We can assume that there is no singular value of $\pi$ on $S_4 \setminus D_B$. 
	Then, $\varphi_4(\tilde{b}_4)$ and $\tilde{\varphi}_4(L \cap \pi^{-1}(\partial D_B))$ represent the same braid in $S^1 \times \mathbb{D}^2$ because of non-singularity on $S_4 \setminus D_B$. 
	Thus, in $S^1 \times \mathbb{D}^2$, $f_1(\varphi_4(\tilde{b}_4))$ and $f_1(\tilde{\varphi}_4(L \cap \pi^{-1}(\partial D_B)))$ represent the same braid.
	It proves that $\varphi_6'^{-1}(f_1(\varphi_4(\tilde{b}_4)))$ and the group of strands in $N(B)$ represent the same braid in $Br_{\partial S_6'}$. 

\begin{remark}
	\label{rmk abuse of notation}
	For convenience, we simply use $f_1(\tilde{b}_4)$, instead of $\varphi_6'^{-1}(f_1(\varphi_4(\tilde{b}_4)))$.
	In the rest of this paper, we will abuse notation in the same way.
\end{remark}

	For the groups of strands in $N(R)$ and $N(G)$, we obtain the following maps $f_2$ and $f_3$ in the same way,
	\begin{gather*}
	f_2: S^1 \times \mathbb{D}^2 \to S^1 \times \mathbb{D}^2, \\
	(\theta, x, y) \mapsto (\theta + \pi, r_0\cos \theta+ r_2 x, r_0\sin \theta  + r_2y),\\
	f_3: S^1 \times \mathbb{D}^2 \to S^1 \times \mathbb{D}^2, \\
	(\theta, x, y) \mapsto (\theta + \pi, -r_0\cos \theta+ r_2(x\cos 2\theta  - y\sin 2\theta ),\\
	\hspace{7em} -r_0 \sin \theta+ r_2(x\sin 2 \theta  + y \cos 2\theta)),
	\end{gather*}
	where $r_0$ and $r_2$ are positive constants which are smaller than 1.
	\begin{remark}
		\mbox{}
		\label{rmk radius of solid torus 2}
		\begin{enumerate}
			\item To obtain $f_1$, we used a Hamiltonian isotopy $\Phi_t$. 
			Similarly, to obtain $f_2$ and $f_3$, we need a Hamiltonian isotopy.
			We construct a Hamiltonian isotopy by extending a Lagrangian isotopy connecting $\sigma_2^{-1}(N_q) \cap \overline{\pi^{-1}(S_6')}$ and  
			$$\varphi_6'^{-1}(\{ (s \cos (\theta + \pi), s \sin (\theta+\pi), r_0 \cos \theta, r_0 \sin \theta) \hspace{0.2em} | \hspace{0.2em} s \in [-1,1], \hspace{0.5em} \theta \in S^1\}),$$
			in $\overline{\pi^{-1}(\mathring{S}_6')} \stackrel{\varphi_6'}{\simeq} \mathbb{D}^2 \times \mathbb{D}^2$.
			\item Note that $r_0$ and $r_2$ are positive constants which are determined by specific choices.
			However, $r_0$ and $r_2$ have to satisfy $r_1 + r_2 < r_0$ since $\operatorname{Im}(f_1), \operatorname{Im}(f_2)$ and $\operatorname{Im}(f_3)$ are mutually disjoint.
		\end{enumerate}
			\end{remark}

	In the same way that we proved that $f_1(\tilde{b}_4)$ and the group of strands in $N(B)$ represent the same braid in $Br_{\partial S_6'}$, we can prove that $f_2(\bar{b}_4)$ (resp.\ $f_3(\bar{b}_4)$) and the group of strand in $N(R)$ (resp.\ $N(G)$) represent the same braid in $Br_{\partial S_6'}$.  
	Then, $b_6'$ is represented by $f_1(\tilde{b}_4) \sqcup f_2(\bar{b}_4) \sqcup f_3(\bar{b}_4)$.
	Note that we are abusing notation for convenience as we mentioned in Remark \ref{rmk abuse of notation}.
	
	The situation for $b_4'$ is analogous. 
	We obtain three maps $g_1, g_2$ and $g_3$ in the same way.
	At the end, $b_4'$ is represented by $g_1(\bar{b}_4) \sqcup g_2(\bar{b}_4) \sqcup g_3(b_6)$. 
	This proves Lemma \ref{lem3} for the case of $\sigma_2^{-1}$.
	
	Note that maps $f_i$ and $g_j$ are given by specific maps acting on $S^1 \times \mathbb{D}^2$, but we would like to consider them as maps on $\tilde{Br}_{\partial S_k}$ for some $k$.
	Then, we summarize the effect of $\sigma_2^{-1}$ as a matrix 
	\begin{align*}
	\Sigma_{2,\mathcal{B}_{\psi}} = \begin{pmatrix}
	id & 0 & 0 & 0 & 0 & 0 \\ 
	0 & id  & 0  & 0 & 0 & 0 \\ 
	0 & 0 & id & 0 & 0 & 0\\ 
	0 & 0 & 0 & g_1 + g_2 & 0 & g_3 \\
	0 & 0 & 0 & 0 & id & 0 \\
	0 & 0 & 0 & f_1 + f_2 + f_3 & 0 & 0 
	\end{pmatrix}.
	\end{align*}  
	Thus, if $\mathring{b}_i$ is a representative of a braid $b_i$ for $L$, then $\mathring{b}_i'$ is a representative of $b_i'$ where 
	\begin{gather*}
	\begin{pmatrix}
	\mathring{b}_1' \\
	\mathring{b}_2' \\
	\mathring{b}_3' \\
	\mathring{b}_4' \\
	\mathring{b}_5' \\
	\mathring{b}_6' 
	\end{pmatrix}
	 = \Sigma_{2,\mathcal{B}_{\psi}} 
	 \begin{pmatrix}
	 \mathring{b}_1 \\
	 \mathring{b}_2 \\
	 \mathring{b}_3 \\
	 \mathring{b}_4 \\
	 \mathring{b}_5 \\
	 \mathring{b}_6 
	 \end{pmatrix} 
	 = \begin{pmatrix}
	 \mathring{b}_1 \\
	 \mathring{b}_2 \\
	 \mathring{b}_3 \\
	 g_1(\bar{b}_4) \sqcup g_2(\bar{b}_4) \sqcup g_3(\mathring{b}_6) \\
	 \mathring{b}_5 \\
	 f_1(\tilde{b}_4) \sqcup f_2(\bar{b}_4) \sqcup f_3(\bar{b}_4) 
	 \end{pmatrix}.
	\end{gather*}
	
	\begin{remark}
		\label{rmk non-linear algebra}
		We remark that in surface theory, we can do linear algebra on weights, but in a higher-dimensional case, we cannot do linear algebra with the matrix $\Sigma_{2,\mathcal{B}_{\psi}}$, because there is no module structure on $\tilde{Br}_{\partial S_i}$.
	\end{remark}	
\vskip0.2in

	\noindent
	{\em Step 3 (Effects of $\tau_0$ on $\mathcal{B}_{\psi}$).} We use the same notation, i.e.,  $b_1, \cdots, b_6$ denote the braids on singular disks $S-i$ of $\mathcal{B}_{\psi}^*$, and 
	$$b_1' = b(\tau_0(L),S_p^+), \cdots, b_6' = b(\tau_0(L),\bar{S}_q^-),$$ 
	so that the singular disk corresponding to $b'_i$ has the same center as the singular disk corresponding to $b_i$.
	We also use $\mathring{b}_i$ and $\mathring{b}_i'$, $S_i$ and $S_i'$, $\varphi_i$ and $\varphi_i'$ to indicate representatives of braids, singular disks in $\mathcal{B}_{\psi}$ and $F_{\tau_0}(\mathcal{B}_{\psi})$, identifications induced by fixed coordinate charts.
	
	The situation for $\tau_0$ is similar to that for $\sigma_2^{-1}$.
	For example, by observing how $\tau_0$ acts on $\overline{\pi^{-1}(\mathring{S}_1)}$, we obtain 
	$$h_1: S^1 \times \mathbb{D}^2 \to S^1 \times \mathbb{D}^2,$$ 
	explaining the contribution of $\tilde{b}_1$ on the construction of $b_3'$.
	Then, $h_1$ is given by a translation on $S^1$ and a scaling on $\mathbb{D}^2$, as $f_1$ is.
	Similarly, we obtain $h_2$ and $h_3$, which explain the contributions of $\bar{b}_1$ on the construction of $b_3'$. 
	The map $h_2$ (resp.\ $h_3$) is of the same types with $f_2$ (resp.\ $f_3$), i.e., 
	\begin{gather*}
	h_2(\theta, x, y) = \big(\theta \text{ or } \theta + \pi, \pm r_1 \cos \theta + r_2 x, \pm r_1 \sin \theta + r_2 y \big), \\
	h_3(\theta, x, y) = \big(\theta \text{ or } \theta + \pi, \pm r_1 \cos \theta + r_2 (x \cos 2\theta - y \sin 2 \theta), \\
	\hspace{7em} \pm r_1 \sin \theta + r_2 (x \sin 2\theta + y \cos 2 \theta) \big),
	\end{gather*}
	where $r_1$ and $r_2$ are constants.
	 
	If a map is of the same type to $f_1$, in other words, if the map is given by a translation on $S^1$ and a scaling on $\mathbb{D}^2$, let the map be of {\em scaling type}.
	This is because the formula defining the map is given by a scaling on fibers.
	The maps of scaling type explain how braids $b(L, S_p^\pm)$ or $b(L,\bar{S}_p^\pm)$ contribute on the construction of braids $b(\delta(L), S_{\delta(p)}^\pm)$ or $b(\delta(L),\bar{S}_{\delta(p)}^\pm)$ through $\delta\big(\pi^{-1}(S_p^\pm)\big)$, where $\delta$ is a Dehn twist.
	
	If a map is of the same type to $f_2$ (resp.\ $f_3$), let the map be of {\em the first (resp.\ second) singular type}.
	This is because they are related to a creation of new singular component.
	The maps of the first and second singular types explain how the braid $b(L,\delta(S_p))$ contributes on the construction of braid $b(\delta(L),\bar{S}_{\delta(p)}^\pm)$.
	
	To summarize, if $b_i$ contributes the construction of $b'_j$ and if the center of a singular disk corresponding to $b_i$ is either the same point or the antipodal point of the center of the singular disk corresponding to $b_j'$, maps of these three types explain the contribution of $b_i$ on the construction of $b_j'$. 
	Note that the center of a singular disk is defined in Remark \ref{rmk position of singular value}.
	
	The maps of these three types explain the effects of $\sigma_2^{-1}$ on $\mathcal{B}$.
	However, to explain the effects of $\tau_0$ on $\mathcal{B}_{\psi}$, we need maps of one more type.  	
			
	This is because $\alpha$ has two plumbing points, unlike $\beta_i$ has only one plumbing point.
	Thus, when we apply $\tau_0$, $b_i$ can contribute on $b_j'$ even if the centers of singular disks corresponding to $b_i$ and $b_j'$ are neither the same nor antipodals of each other. 
	For example, $L \cap \pi^{-1}\big(\pi(N_p)\big)$ is stretched by $\tau_0$.
	The stretched part $\tau_0\big(L \cap \pi^{-1}(\pi(N_p))\big)$ has intersection with $\pi^{-1}(S_4)$ and $\pi^{-1}(S_5)$. 
	Thus, $b_4'$ has some strands corresponding to $\tau_0(L \cap \pi^{-1}(\pi(N_p))) \cap \pi^{-1}(\partial S_4)$
	These strands are the contribution of $\bar{b}_1$ on the construction of $b_4'$.
	Similarly, $\bar{b}_1$ contributes on the construction of $b_5'$, and $\bar{b}_4$ contributes on the constructions of $b_1'$ and $b_2'$.
	
	To describe the contribution of $\bar{b}_1$ on $b_4'$, without loss of generality, we assume that there is no singular value for 
	$$\tau_0(L \cap \pi^{-1}(\pi(N_p))) \cap \overline{\pi^{-1}(\mathring{S}_4)} \stackrel{\pi}{\to} S_4,$$ 
	by Remark \ref{rmk position of singular value}. 
	Thus, $\tau_0(L \cap \pi^{-1}(\pi(N_p))) \cap \overline{\pi^{-1}(\mathring{S}_4)}$ is a union of disjoint Lagrangian disks on $\overline{\pi^{-1}(\mathring{S}_4)}$
	and $\bar{b}_1$ contributes on $b_4'$ by adding strands near $\tau_0(N_p) \cap \pi^{-1}(\partial S_4)$ which are not braided to each other.
	The number of the added strands is the same as the number of strands of $\bar{b}_1$. 
	In the same way, $\bar{b}_1$ contributes on the construction of $b_5'$.
	
	To describe the contribution of $\bar{b}_1$ on $b_4'$ as a map acting on $S^1 \times \mathbb{D}^2$, we define $\bar{b}_1^\circ \subset \pi^{-1}(\partial S_1)$ such that 
	$$\varphi_1(\bar{b}_1^\circ) := \{ (\theta, x_0, y_0) \hspace{0.2em} | \hspace{0.2em} \phi_1^{-1}(0,x_0,y_0) \in \bar{b}_1 \} \subset S^1 \times \mathbb{D}^2 \stackrel{\varphi_1}{\simeq} \pi^{-1}(\partial S_1),$$
	which represents a trivial braid having the same number of strands with $\bar{b}_1$.
	This is because we only need the number of the strands in $\bar{b}_1$, not the way $\bar{b}_1$ is braided.
	
	We construct a Hamiltonian isotopy $\Phi_t$ by extending a Lagrangian isotopy connecting $\tau_0(N_p) \cap \pi^{-1}(\partial S_4)$ and 
	$$\varphi_4'^{-1}(\{(s \cos \theta, s \sin \theta, c_1, c_2) \hspace{0.2em} | \hspace{0.2em} s \in [-1,1], \theta \in S^1, c_i \hspace{0.5em} \text{is constants} \}) \subset \pi^{-1}(S_4),$$ 
	as we did in Remark \ref{rmk radius of solid torus 2}.
	Then, one obtains
	\begin{gather*}
	h_t: S^1 \times \mathbb{D}^2 \stackrel{\varphi_1}{\simeq} \pi^{-1}(\partial S_1) \xrightarrow{\Phi_1 \circ \tau_0} \pi^{-1}(\partial S_4) \stackrel{\varphi_4'}{\simeq} S^1 \times \mathbb{D}^2,\\
	(\theta, x, y) \mapsto (\theta, r_0x + c_1, r_0y+c_2),
	\end{gather*}
	where $r_0$ is a positive constant number less than 1. 
	Then, $h_t(\bar{b}_1^{\circ})$ represents the same braid to the strands in $b'_4$, which correspond to $\tau_0(L \cap \pi^{-1}(\pi(N_p)))$. 
	We recall that we are abusing notation as mentioned in Remark \ref{rmk abuse of notation}.
	
	Similarly, if $b_i$ contributes the construction of $b'_j$ and if the center of a singular disk corresponding to $b_i$ is neither the same point nor the antipodal point of the center of the singular disk corresponding to $b'_j$, then the contribution of $b_i$ on $b_j'$ can be described by a map like $h_t$. 
	If a map is of the same type with $h_t$, let the map be of {\em trivial type}, because a map of trivial type adds strands which are not braided with each other.
	
	Then, we can describe the effect of $\tau_0$ on $\mathcal{B}_{\psi}$ as a matrix
	\begin{align*}
	\mathrm{T}_{0,\mathcal{B}_{\psi}} = \begin{pmatrix}
	0 & i & 0 & h_t & 0 & 0 \\ 
	h_1 + h_2 + h_3 & 0  & 0  & i_t & 0 & 0 \\ 
	0 & 0 & id & 0 & 0 & 0\\ 
	h_t & 0 & 0 & 0 & i & 0 \\
	i_t & 0 & 0 & h_1 + h_2 + h_3 & 0 & 0 \\
	0 & 0 & 0 & 0 & 0 & id 
	\end{pmatrix}.
	\end{align*} 
	Among the entries, $h_1, i$, and $id$ are of scaling type, $h_2$ and $h_3$ are of the first and second singular types, and $h_t$ and $i_t$ are of trivial type. 
	\vskip0.2in
	
	\noindent{\em Step 4 (General case).}
	A $\psi$ of generalized Penner type is a product of Dehn twists. 
	In the general case, when we apply $\psi$, each Dehn twist is followed by a Hamiltonian isotopy as $\sigma_2^{-1}$ is followed by $\Phi_t$ in step 2.
	Let $\psi_H = (\Phi_{1,1} \circ \delta_1 ) \circ \cdots \circ (\Phi_{l,1} \circ \delta_l)$, where $\psi = \delta_1 \circ \cdots \circ \delta_l$, $\delta_i$ is a Dehn twist, and $\Phi_{i,t}$ is a Hamiltonian isotopy which follows $\delta_i$.
	
	After applying the Hamiltonian isotopy, the effect of a Dehn twist $\tau_i$ (resp.\ $\sigma_j^{-1}$) on $\mathcal{B} \in \mathbb{B}$ is described by a matrix $\mathrm{T}_{i, \mathcal{B}}$ (resp.\ $\Sigma_{j,\mathcal{B}}$), whose entries are sums of maps of four types.
	As we mentioned in Step 3, the maps of scaling type explain how braids $b(L, S_p^\pm)$ or $b(L,\bar{S}_p^\pm)$ contribute on the construction of braids $b(\delta(L), S_{\delta(p)}^\pm)$ or $b(\delta(L),\bar{S}_{\delta(p)}^\pm)$, where $\delta$ is a Dehn twist.
	Similarly, the maps of the first and second singular types explain how braids $b(L,\delta(S_p))$ contribute on the construction of braid to $b(\delta(L),\bar{S}_{\delta(p)}^\pm)$.
	Finally, the maps of trivial type explain the other cases.
	
	This completes the proof of Lemma \ref{lem3}.
\end{proof}

\vskip0.2in
\noindent {\em Taking the limit of a braid sequence.}
We have obtained braid sequences $\{b(\psi^m(L),S_i)\}_{m \in \mathbb{N}}$, where $L$ is carried by $\mathcal{B}_{\psi}$, and $S_i$ is a singular disk of $\mathcal{B}_{\psi}^*$.
In the rest of this subsection, we construct a limit of $\{b(\psi^m(L),S_i)\}_{m \in \mathbb{N}}$ as $m \to \infty$.

We argue with the above example, i.e., 
$$M = P(\alpha, \beta_1, \beta_2), \psi = \tau_0 \circ \sigma_1^{-1} \circ \sigma_2^{-1}.$$ 
For convenience, let 
$$\mathcal{B} := \mathcal{B}_{\psi},\hspace{0.2em} \mathcal{B}' := F_{\sigma_2^{-1}}(\mathcal{B}),\hspace{0.2em} \mathcal{B}'' := F_{\sigma_1^{-1}}(\mathcal{B}'),$$ 
and let singular disks $S_p^+, \bar{S}_p^+, \bar{S}_p^-, S_q^+, \bar{S}_q^+$, and $\bar{S}_q^-$ of $\mathcal{B}$ be $S_1, \cdots, S_6$. 
Using notation from the proof of Lemma \ref{lem3}, we have matrices $\mathrm{T}_{0,\mathcal{B}''}, \Sigma_{1,\mathcal{B}'}$, and $\Sigma_{2,\mathcal{B}}$.
Then, we obtain $\Psi = \mathrm{T}_{0,\mathcal{B}''} \cdot \Sigma_{1,\mathcal{B}'} \cdot  \Sigma_{2,\mathcal{B}}$ by defining a multiplication of maps as a composition of them. 
Note that a product of two arbitrary matrices is not defined. 
For example, an input of $\Sigma_{2,\mathcal{B}}$ and an output of $\mathrm{T}_{0,\mathcal{B}''}$ are tuples of braids on singular disks of $\mathcal{B}^*$.
Thus, $\Sigma_{2,\mathcal{B}} \cdot \mathrm{T}_{0,\mathcal{B}''}$ is defined.
However, $\mathrm{T}_{\mathcal{B}''} \cdot \Sigma_{2,\mathcal{B}}$ is not defined since an input of $\mathrm{T}_{0,\mathcal{B}''}$ is a tuple of braids on singular disks of $\mathcal{B}^*$, but an output of $\Sigma_{2,\mathcal{B}}$ is a tuple of braids on singular disks of $\mathcal{B}'^*$.

Let $\mathring{b}_i$ be a representative of $b_i = b(L,S_i)$.
If
\begin{gather*}
\label{eqn matrix}
\begin{pmatrix}
\mathring{b}_{1,m} \\
\mathring{b}_{2,m} \\
\mathring{b}_{3,m} \\
\mathring{b}_{4,m} \\
\mathring{b}_{5,m} \\
\mathring{b}_{6,m} 
\end{pmatrix}
:= \Psi^m 
\begin{pmatrix}
\mathring{b}_1 \\
\mathring{b}_2 \\
\mathring{b}_3 \\
\mathring{b}_4 \\
\mathring{b}_5 \\
\mathring{b}_6 
\end{pmatrix},
\end{gather*} 
then $\mathring{b}_{i,m}$ is a representative of $b_{i,m}$.
Thus, in order to keep track of braid sequences $\{b_{i,m}\}_{m \in \mathbb{N}}$, it is enough to keep track of $\Psi^m$. 

Every entry of $\Psi^m$ is a sum of compositions of $3m$-maps.
The image of a composition of $3m$-maps is a solid torus.
By Remarks \ref{rmk radius of solid torus} and \ref{rmk radius of solid torus 2}, the radius of each solid torus appearing in $\Psi^m$ decreases exponentially and converges to zero as $m \to \infty$. 

From another view points, we consider $\psi_H$.
Note that $\psi_H$ is defined in step 4 of the proof of Lemma \ref{lem3}. 
The proof of Lemma \ref{lem3} implies that 
$$\mathring{b}_{i,m} \subset \psi_H^m(N(\mathcal{B}_{\psi})) \cap \pi^{-1}(\partial S_i) \text{  for all  } m \in \mathbb{N} \text{  and for all  } i = 1, \cdots, 6.$$ 
Let 
$$B_{i,m} := \psi_H^{m}(N(\mathcal{B}_{\psi})) \cap \pi^{-1}(\partial S_i).$$
Then, $B_{i,m}$ is the disjoint union of solid tori.
More precisely, each solid torus in $B_{i,m}$ is the image of a composition of $3m$-maps, appearing in $\Psi^m$.
Conversely, for each composition of $3m$-maps appearing in $\Psi^m$, the image is a solid torus contained in $B_{i,m}$.
The radii of solid tori in $B_{i,m}$ are decreasing exponentially and are converging to zero as $m \to \infty$.

Since $\mathring{b}_{i,m} \subset B_{i,m}$ and $B_{i+1,m} \subset B_{i,m}$ for all $m \in \mathbb{N}$, there is a limit 
$$B_{i,\infty} := \lim_{m\to \infty}B_{i,m} = \cap_{m \in \mathbb{N}} B_{i,m}.$$
Thus, $B_{i,\infty}$ is the union of infinite strands as a subset of $\pi^{-1}(\partial S_i)$ and 
$$\lim_{m \to \infty} \mathring{b}_{i,m} = B_{i,\infty},$$
as a sequence of closed sets in $\pi^{-1}(\partial S_i)$.

\begin{remark}
	\label{rmk limit of braid seq}
	\mbox{}
	\begin{enumerate}
		\item We have constructed a sequence of specific representatives 
		$\{\mathring{b}_{i,m}\}_{m\in\mathbb{N}}$
		such that $$\lim_{m \to \infty} \mathring{b}_{i,m} = B_{i,\infty}.$$ 
		For the purposes of extending the lamination to the singular and regular disks in Sections 4.3 and 4.4, we assume that the limit $B_{i,\infty}$ is a specific closed subset in $\pi^{-1}(\partial S_i)$.
		\item Each strand of $B_{i,\infty}$ corresponds to an infinite sequence $\{f_m\}_{m\in \mathbb{N}}$ such that $f_1 \circ \cdots \circ f_{3m}$ appears in $\Phi^m$ for all $m \in \mathbb{N}$. 
	\end{enumerate}
	\end{remark}

\subsection{Lagrangian lamination on a singular disk}

Let $\psi$ be of generalized Penner type and let $L$ be a Lagrangian submanifold which is carried by $\mathcal{B}_{\psi}$.
In Section 4.2, on each singular disk $S_i$, we gave an inductive description of a sequence $\{b(\psi^m(L),S_i)\}_{m \in \mathbb{N}}$.
There is a limit $B_{i,\infty}$ of the sequence. 
Moreover, the limit $B_{i,\infty}$ depends only on $\psi$ and $B_{i,\infty}$ is independent to $L$.
In this present subsection, we will construct a Lagrangian lamination $\mathcal{L}_i \subset \pi^{-1}(S_i)$ from $B_{i,\infty}$. 
\begin{remark}
	If $\partial S_i$ is contained in the branch locus of $\mathcal{B}_{\psi}^*$, $B_{i,\infty}$ can be divided into two groups, as a braid $b$ was divided into $\bar{b}$ and $\tilde{b}$ in the Step 1 of the proof of Lemma \ref{lem3}.
	We will construct $\mathcal{L}_i$ from $B_{i, \infty} \cap \overline{\pi^{-1}(\mathring{S}_i)}$, which is one of two groups of $B_{i,\infty}$.

	If $\partial S_i$ is not contained in the branch locus of $\mathcal{B}_{\psi}^*$, then $\mathcal{B}_{i,\infty} \subset \overline{\pi^{-1}(\mathring{S}_i)}$.
	In this case, we will construct a Lagrangian lamination from $B_{i,\infty} = B_{i, \infty} \cap \overline{\pi^{-1}(\mathring{S}_i)}$. 
	Thus, we will simply say that the Lagrangian lamination is constructed from $B_{i, \infty} \cap \overline{\pi^{-1}(\mathring{S}_i)}$.
\end{remark}

\begin{lemma}
	\label{lem4}
	Let $\psi$ be of generalized Penner type. 
	For each singular disk $S_i$ of $\mathcal{B}_{\psi}$, there is a Lagrangian lamination $\mathcal{L}_i \subset \overline{\pi^{-1}(\mathring{S}_i)}$, such that
	\begin{enumerate}
		\item $\mathcal{L}_i \cap \pi^{-1}(\partial S_i)$ is the same braid with $B_{i,\infty} \cap \overline{\pi^{-1}(\mathring{S}_i)}$, where $B_{i,\infty}$ is the limit of a braid sequence, which depends only on $\psi$. 
		\item If $L$ is a Lagrangian submanifold of $M$ which is carried by $\mathcal{B}_{\psi}$, then for every $m \in \mathbb{N}$, there is a Lagrangian submanifold $L_m$ which is Hamiltonian isotopic to $\psi^m(L)$ and $L_m \cap \overline{\pi^{-1}(\mathring{S}_i)}$ converges to $\mathcal{L}_i$ as a sequence of closed subsets.
	\end{enumerate} 
\end{lemma}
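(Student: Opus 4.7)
The plan is to construct $\mathcal{L}_i$ leaf by leaf, assigning one Lagrangian disk to each strand of $B_{i,\infty}\cap\overline{\pi^{-1}(\mathring{S}_i)}$, by Hausdorff-limiting the sequence $L_m\cap\overline{\pi^{-1}(\mathring{S}_i)}$ where $L_m:=\psi_H^m(L)$ for the Hamiltonian perturbation $\psi_H$ of $\psi$ introduced in Step~4 of the proof of Lemma \ref{lem3}. Since $\psi_H^m$ is Hamiltonian isotopic to $\psi^m$, establishing such convergence will yield property (2) at once; the lamination itself will then only depend on $\psi$, not on $L$.

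First I translate everything to the cotangent picture: Remark \ref{rmk identify with cotangent bundle of disk} identifies $(\overline{\pi^{-1}(\mathring{S}_i)},\omega)$ with $(DT^*\mathcal{D}_i,\omega_0)$ so that $\pi$ becomes the cotangent projection, and Remark \ref{rmk singular value condition} lets me push every singular value of $\pi:L_m\to\mathcal{B}^*_{\psi}$ into $\mathring{S}_i$. Away from the real blow-up loci, each connected component of $L_m\cap\overline{\pi^{-1}(\mathring{S}_i)}$ then projects diffeomorphically onto $\mathcal{D}_i$, and after shrinking the fiber radius of $N(\mathcal{B}_{\psi})$ it is the graph of an exact $1$-form $df^m_j$ on $\mathcal{D}_i$ whose boundary is a single strand of $\mathring{b}_{i,m}$. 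To extract convergence, I exploit the geometric contraction built into $\psi_H$: by Remarks \ref{rmk radius of solid torus} and \ref{rmk radius of solid torus 2} the solid tori in $\pi^{-1}(\partial S_i)$ confining strands of $\mathring{b}_{i,m}$ have fiber radii decaying geometrically in $m$, and inspecting the scaling, singular, and trivial maps from the proof of Lemma \ref{lem3} shows that each composition $f_1\circ\cdots\circ f_{3m}$ appearing in $\Psi^m$ is an affine map of the cotangent fibers contracting at the same rate. Choosing indices $j(m)$ compatible with an infinite sequence from Remark \ref{rmk limit of braid seq} and integrating from a basepoint, the primitives $f^m_{j(m)}$ form a $C^1$-Cauchy sequence whose limit $f^\infty_\gamma$ gives a Lagrangian disk $D^\infty_\gamma:=\mathrm{Graph}(df^\infty_\gamma)$ bounding the limit strand $\gamma$.

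Defining $\mathcal{L}_i$ to be the resulting Hausdorff limit of $L_m\cap\overline{\pi^{-1}(\mathring{S}_i)}$, it decomposes into the disjoint union of the leaves $D^\infty_\gamma$ indexed by strands of $B_{i,\infty}\cap\overline{\pi^{-1}(\mathring{S}_i)}$ together with limits of the real-blow-up-type singular components, which stay of this type by Definition \ref{def of real blow-up type}. The graph representations of the $D^\infty_\gamma$, together with the disjointness inherited from the nested tori at each finite stage, supply the local product charts required of a Lagrangian lamination; property (1) is then read off from Remark \ref{rmk limit of braid seq}, and property (2) is part of the construction. The main obstacle is the interior contraction step: Remarks \ref{rmk radius of solid torus} and \ref{rmk radius of solid torus 2} only record the fiber contraction of the solid tori over $\partial S_i$, so a careful argument is needed to show that the same geometric rate governs the full fibered neighborhood over all of $S_i$, not just its boundary. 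Combinatorial control over boundary strands alone would leave room for interior oscillation of the graphs, and it is here that one must genuinely use the symplectic dynamics of $\psi_H$ to rule this out.
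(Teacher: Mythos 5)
Your construction of the leaves has a genuine gap, and you have in fact named it yourself in your last sentence. You define each leaf as a $C^1$-limit of graphs $\mathrm{Graph}(df^m_{j(m)})$ over $\mathcal{D}_i$, and the Cauchy property of the primitives is supposed to follow from the geometric contraction of the maps appearing in $\Psi^m$. But Remarks \ref{rmk radius of solid torus} and \ref{rmk radius of solid torus 2} only control the radii of the solid tori in $\pi^{-1}(\partial S_i)$, i.e.\ the fibers over the boundary of the singular disk; they say nothing about how $\psi_H^m(N(\mathcal{B}_\psi))$ contracts in the fiber direction over the interior of $S_i$. Without an interior estimate, the sheets of $L_m\cap\overline{\pi^{-1}(\mathring{S}_i)}$ could oscillate or accumulate badly away from $\partial S_i$, the Hausdorff limit need not decompose into graphs, and the local product charts required of a lamination would not be available. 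As written, the central analytic step of your argument is asserted rather than proved.

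The paper's proof is organized precisely so as not to need that estimate. A strand of $B_{i,\infty}$ is an infinite sequence $\{f_m\}$ of maps of the four types; whenever some $f_k$ is of trivial type, the corresponding leaf is written down in closed form: it is the image of an explicit ``constant'' Lagrangian disk (sitting over a singular disk of the cyclically shifted automorphism $\tilde\psi=\delta_{k_0}\circ\cdots\circ\delta_l\circ\delta_1\circ\cdots\circ\delta_{k_0-1}$) under the \emph{finite} composition $(\Phi_{1,1}\circ\delta_1)\circ\cdots\circ(\Phi_{k_0,1}\circ\delta_k)$, so no interior limit of graphs is ever taken. Strands containing no trivial-type map are then shown to be limits of strands that do contain one --- this is where the hypothesis that some sphere has at least two plumbing points enters, since it forces trivial-type entries in the transition matrices --- and $\mathcal{L}_i$ is defined as the closure of the union of the explicitly constructed leaves. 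The convergence statement (2) is established afterwards, by sandwiching $L_m\cap\overline{\pi^{-1}(\mathring{S}_i)}$ between $\mathcal{L}_i$ and the nested sets $\psi_H^m(N(\mathcal{B}_\psi))\cap\overline{\pi^{-1}(\mathring{S}_i)}$, rather than by using the limit of $L_m$ as the definition of $\mathcal{L}_i$. Note also that your argument omits the degenerate case in which no sphere has two or more plumbing points (the plumbing $P(\alpha,\beta)$ at a single point), which the paper treats separately by spinning. To repair your route you would either have to prove the interior fiberwise contraction of $\psi_H^m(N(\mathcal{B}_\psi))$ over all of $S_i$, or fall back on the paper's device of building leaves from finite data plus a closure.
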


\begin{proof}
	Let $\psi$ be of generalized Penner type, i.e., $\psi = \delta_1 \circ \cdots \circ \delta_l$, where $\delta_k$ is a Dehn twist $\tau_i$ or $\sigma_j^{-1}$.
	We will use similar notation with the previous subsection, for example, $S_i$ denotes a singular disk of $\mathcal{B}_{\psi}$, $\Psi$ denotes a matrix corresponding to $\psi$,  $\varphi_i : \pi^{-1}(\partial S_i) \stackrel{\sim}{\to} S^{n-1} \times \mathbb{D}^n$ denotes the identification induced from the fixed coordinate chart on $S_i$, and so on. 
	
	In this proof, first, we will construct $\mathcal{L}_i \subset \overline{\pi^{-1}(\mathring{S}_i)}$ satisfying the first condition, i.e., $\mathcal{L}_i \cap \pi^{-1}(\partial S_i) = B_{i, \infty} \cap \overline{\pi(\mathring{S}_i)}$. 
	Then, we will show that the constructed $\mathcal{L}_i$ satisfies the second condition.
		
	\vskip.2in
	\noindent{\em Construction of $\mathcal{L}_i$.}	
	As we mentioned in Remark \ref{rmk limit of braid seq}, a strand of $B_{i,\infty} \cap \overline{\pi(\mathring{S}_i)}$ is identified with an infinite sequence $\{f_m\}_{m \in \mathbb{N}}$ such that $f_1 \circ \cdots \circ f_{lk}$ appears in $\Psi^k$ for all $k \in \mathbb{N}$.
	Note that we are assuming that $\psi= \delta_1 \circ \cdots \circ \delta_l$ for some positive number $l$.
	For each strand $\{f_m\}_{m \in \mathbb{N}}$ of $B_{i,\infty} \cap \overline{\pi^{-1}(\mathring{S}_i)}$, we will construct a Lagrangian submanifold of $\overline{\pi^{-1}(\mathring{S}_i)}$ whose boundary agrees with the strand $\{f_m\}_{m \in \mathbb{N}}$. 
	
	First, for a given strand $\{f_m \}_{m \in \mathbb{N}}$, let us assume that $f_1$ is of trivial type. 
	Then, the strand is identified with a straight curve
	$$\{ (\theta, x_1, \cdots, x_n) \hspace{0.2em} | \hspace{0.2em} \theta \in S^{n-1} \} \subset S^{n-1} \times \mathbb{D}^n \stackrel{\varphi_i}{\simeq} \pi^{-1}(\partial S_i),$$
	where $x_i$ is a constant.
	A subsequence $\{f_m\}_{m \geq 2}$ determines constants $x_i$. 
	Let 
	$$D:=\{(p, x_1, \cdots, x_n) \hspace{0.2em} | \hspace{0.2em} p \in S_i \} \subset \mathbb{D}^n \times \mathbb{D}^n \stackrel{\varphi_i}{\simeq} \overline{\pi^{-1}(\mathring{S}_i)}.$$
	Then, $\varphi_i(D)$ is a Lagrangian disk in $\overline{\pi^{-1}(\mathring{S}_i)}$, whose boundary agrees with the strands $\{ f_m \}_{m \in \mathbb{N}}$.
	
	Second, let us assume that $f_1$ is not of trivial type, but there exists $m \in \mathbb{N}$ such that $f_m$ is of trivial type. 
	Let $k>1$ be the smallest number such that $f_k$ is of trivial type appearing in $\{ f_m \}_{m \in \mathbb{N}}$. 
	Then, $\tilde{\psi} = \delta_{k_0} \circ \cdots \circ \delta_l \circ \delta_1 \circ \cdots \circ \delta_{k_0-1}$, where $k_0 \cong k (\text{mod } l)$, is of generalized Penner type such that $\mathcal{B}_{\tilde{\psi}}$ has a singular disk $\tilde{S}_j$, so that $\tilde{B}_{j,\infty}$, the limit of the braid sequence corresponding to $\tilde{\psi}$ and $\tilde{S}_j$, has a strand identified with $\{f_m\}_{m \geq k}$.
	Thus, there is a Lagrangian disk in $\pi^{-1}(\tilde{S}_j)$ whose boundary agrees with $\{f_m\}_{m \geq k}$.
	Let $D$ denote the Lagrangian disk in $\pi^{-1}(\tilde{S}_j)$.
	Then, there is a connected component of 
	$$\Big((\Phi_{1,1} \circ \delta_1) \circ \cdots \circ (\Phi_{k_0,1} \circ \delta_k)\Big)(D) \cap \overline{\pi^{-1}(\mathring{S}_i)},$$
	whose boundary is $\{ f_m \}_{m \in \mathbb{N}}$, where $\Phi_{i,t}$ is a Hamiltonian isotopy mentioned mentioned in Section 4.1.
	
	To summarize, if there is at least one map of trivial type in $\{f_m\}_{m \in \mathbb{N}}$, then we have a Lagrangian submanifold in $\overline{\pi^{-1}(\mathring{S}_i)}$, whose boundary agrees with $\{ f_m\}_{m \in \mathbb{N}}$.
	Let $\mathcal{L}_{i,\infty}$ be the union of those Lagrangian submanifolds.
	
	Finally, let us assume that for all $m \in \mathbb{N}, f_m$ is not of trivial type. 
	Then, for all $k \in \mathbb{N}$, we will construct a sequence $\{f_m^k\}_{m \in \mathbb{N}}$ for each $k \in \mathbb{N}$, satisfying 
	\begin{enumerate}
		\item $\{f_m^k\}_{m\in\mathbb{N}}$ is a strand of $B_{i,\infty}$,
		\item if $ m \leq kl$, then $f_m^k = f_m$,
		\item there exists a constant $N_k \in \mathbb{N}$ such that $f_{kl + N_k}^k$ is of trivial type.
	\end{enumerate}  

	If there is a sphere having 2 or more plumbing points, there exists a sequence $\{f_m^k\}_{m \in \mathbb{N}}$ for all $k \in \mathbb{N}$.
	This is because of the following:
	
	We note that the finite sequence $\{f_t\}_{1\leq t \leq kl}$ explains a contribution of the braid on a singular disk $S_{i_0}$ on the construction of the braid on a singular disk $S_{j_0}$ when one applies $\psi^k$. 
	In other words, from the view point of Remark \ref{rmk radius of solid torus}, there is a connected component of $\psi^k(\overline{\pi^{-1}(\mathring{S}_i)}) \cap \pi^{-1}(S_{j_0})$ or $\psi^k(\pi^{-1}(\pi(N_p))) \cap \pi^{-1}(S_{j_0})$, where $p$ is the center of $S_{i_0}$ and $N_p$ is the neck at $p$, such that the boundary of the connected component is the image of $f_1 \circ \cdots \circ f_{kl}$. 
	
	If there exists a sphere having 2 or more plumbing points, the Dehn twist along the sphere appears in $\psi$, because of our assumption that every Dehn twist appears in $\psi$. 
	Let $\delta_i$ be the Dehn twist.
	For any plumbing points $p$ and $q$ of the sphere, $\delta_i(\pi^{-1}(\pi(N_p)))$ intersects $\pi^{-1}(S_q^+)$, if the sphere is positive, or $\pi^{-1}(S_q^-)$, otherwise. 
	Thus, there is a map of trivial type in $\Delta_i$, the matrix corresponding to $\delta_i$.
	
	For a sufficiently large $N$, $(\psi^N \circ \delta_1 \circ \cdots \circ \delta_i) (\pi^{-1}(\pi(N_p)))$ intersects $\pi^{-1}(S_{j_0})$.
	We can prove this by observing that $(\psi^N \circ \delta_1 \circ \cdots \circ \delta_{i-1} )(\pi^{-1}(S_q^\pm)) \cap \pi^{-1}(S_{j_0}) \neq \varnothing$ for some sufficiently large $N$. 
	Thus, there is a finite sequence of functions $\{g_j\}_{1 \leq j \leq Nl + i}$ such that $g_j$ is an entry of $\Delta_{j'}$, the matrix corresponding to $\delta_{j'}$, where $j' \cong j (\text{mod } l)$, and the image of $g_1 \circ \cdots \circ g_{Nl+i}$ is identified to the boundary of a connected component of $(\psi^N \circ \delta_1 \circ \cdots \delta_i) (\pi^{-1}(\pi(N_p))) \cap \pi^{-1}(S_{j_0})$. 
	Moreover, we can extend the finite sequence $\{g_j\}_{1 \leq j \leq Nl + i}$ to an infinite sequence $\{g_j\}_{j\in \mathbb{N}}$ such that $\{g_j\}_{j \in \mathbb{N}}$ appears in $B_{i,\infty}$.
	Then, by setting $f_{kl + j}^k = g_j$, we prove the existence of $\{f^k_m\}_{m \in \mathbb{N}}$. 
	
	We obtain a strand $\{f_m^k\}_{k \in \mathbb{N}}$ for each $k \in \mathbb{N}$. 
	These strands converge to $\{f_m\}_{m \in \mathbb{N}}$ as $k \to \infty$. 
	Moreover, by definition of $\mathcal{L}_{i, \infty}$, the boundary of $\mathcal{L}_{i,\infty}$ contains strands $\{f_m^k\}_{m \in \mathbb{N}}$ for all $k \in \mathbb{N}$. 
	Thus, the strand $\{f_m\}_{m \in \mathbb{N}}$ is contained in the boundary of $\mathcal{L}_{i}$, where $\mathcal{L}_{i} = \overline{\mathcal{L}_{i,\infty}}$, the closure of $\mathcal{L}_{i,\infty}$, i.e., the closure of $\mathcal{L}_{i,\infty}$.
	
	If there is no sphere with 2 or more plumbing points, then there is only one positive and one negative sphere intersecting at only one point because we are working on a connected plumbing space.
	For the case, we can construct a Lagrangian lamination $\mathcal{L}$ on $M$ by spinning. 
	Then, $\mathcal{L}_{i} :=\mathcal{L} \cap \pi^{-1}(S_i)$ is a Lagrangian lamination which we want to construct.
\begin{remark}
	We note that, if there is no sphere with 2 or more plumbing points, then  
	we can construct $\mathcal{L}$ without using singular and regular disks. 
\end{remark}
	
	\vskip.2in
	{\noindent {\em Convergence to $\mathcal{L}_i$}.} 
	Let $L_m := \psi_H^m(L)$. 
	We defined $\psi_H$ in Step 4 of the proof of Lemma \ref{lem3}.
	We will prove that $L_m \cap \overline{\pi^{-1}(\mathring{S}_i)}$ converges to $\mathcal{L}_i$.
	
	First, we will show that 
	\begin{gather}
	\label{eqn the limit lamination in a singular disk}
	\lim_{m \to \infty} L_m \cap \overline{\pi^{-1}(\mathring{S}_i)} = \lim_{m \to \infty} (\psi_H^m(N(\mathcal{B}_{\psi})) \cap \overline{\pi^{-1}(\mathring{S}_i)}).
	\end{gather}
	Since $\psi_H(N(\mathcal{B}_{\psi})) \subset N(\mathcal{B}_{\psi})$, $$\psi_H^{m+1}(N(\mathcal{B}_{\psi})) \cap \overline{\pi^{-1}(\mathring{S}_i)} \subset \psi_H^m(N(\mathcal{B}_{\psi})) \cap \overline{\pi^{-1}(\mathring{S}_i)}.$$
	Thus, there exists the limit 
	$$\lim_{m \to \infty} (\psi_H^m(N(\mathcal{B}_{\psi})) \cap \overline{\pi^{-1}(\mathring{S}_i)}) = \cap_m (\psi_H^m(N(\mathcal{B}_{\psi})) \cap \overline{\pi^{-1}(\mathring{S}_i)}).$$
	
	If we equip a Riemannian metric $g$ on $M$, then $d_H(\psi_H^m(\mathcal{B}_{\psi}), \psi_H^m(N(\mathcal{B}_{\psi})))$, where $d_H$ is the Hausdorff metric induced from $g$, converges to zero as $m \to \infty$ because of the same reason that $B_{i,m} := \psi_H^{m}(N(\mathcal{B}_{\psi})) \cap \pi^{-1}(\partial S_i) $ converges to an infinite braid $B_{i,\infty}$ in the last part of Section 4.2.
	
	Since for a large $N_0$, $L_{N_0}$ intersects $\pi^{-1}(S_j)$ for any singular disk $S_j$, $L_{m+N_0} \cap \overline{\pi^{-1}(\mathring{S}_i)}$ intersects every connected component of $\psi_H^m(N(\mathcal{B}_{\psi})) \cap \overline{\pi^{-1}(\mathring{S}_i)}$.
	Thus,
	$$0 \leq \lim_{m \to \infty} d_H(L_{m+N_0} \cap \overline{\pi^{-1}(\mathring{S}_i)}, \psi_H^m(N(\mathcal{B}_{\psi})) \leq \lim_{m \to \infty} 2 d_H(\psi_H^m(\mathcal{B}_{\psi}), \psi_H^m(\mathcal{B}_{\psi})) = 0.$$
	This proves Equation \eqref{eqn the limit lamination in a singular disk}.
	Let $\mathbb{L}_i$ be the limit in Equation \eqref{eqn the limit lamination in a singular disk}.
	
	Second, we show that $\mathbb{L}_i$ is $\mathcal{L}_i$.
	By the construction of $\mathcal{L}_i$, we know that 
	$$\mathcal{L}_i \subset \psi_H^m(N(\mathcal{B}_{\psi})) \cap \overline{\pi^{-1}(\mathring{S}_i)} \text{  for every  } m \in \mathbb{N}.$$
	It implies that $\mathcal{L}_i \subset \mathbb{L}_i$.
	Moreover, 
	$$\mathcal{L}_i \cap \pi^{-1}(\partial S_i) = \mathbb{L}_i = B_{i,\infty} \cap \overline{\pi^{-1}(\mathring{S}_i)}.$$ 
	Because every connected component of $\mathbb{L}_i$ has a boundary on $\partial S_i$, this shows $\mathcal{L}_i = \mathbb{L}_i$. 
\end{proof}

\subsection{Lagrangian lamination on a regular disk}
 
In the previous subsection, we constructed Lagrangian laminations on singular disks, when boundary data for singular disks are given.
In the present subsection, first, we will define boundary data for a regular disk. 
Then, second, we will construct Lagrangian laminations on regular disks from the given data.
Finally, we will prove Theorem \ref{lamination thm} as a corollary of Lemmas \ref{lem4} and \ref{lem6}. 

Before defining the boundary data, we remark that, by Remark \ref{rmk identify with cotangent bundle of disk}, $\overline{\pi^{-1}(\mathring{R}_i)}$ is symplectomorphic to $DT^*\mathcal{D}$, where $\mathcal{D}$ is a disk. 

We define a data $c_{j,m}$ on the boundary of a regular disk $R_j$ for $\psi^m(L)$, by setting  
$$c_{j,m} := L_m \cap \pi^{-1}(\partial R_j).$$
We defined $L_m$ in the proof of Lemma \ref{lem4}.
Note that $c_{j,m}$ is a closed subset, not a class of a closed subset.

To obtain a limit of $c_{j,m}$, we consider 
$$C_{j,m}:= \psi_H^m(N(\mathcal{B}_{\psi})) \cap \pi^{-1}(\partial R_j),$$ 
as we did in Section 4.2. 
Since $\psi_H^(N(\mathcal{B}_{\psi})) \subset N(\mathcal{B}_{\psi})$, $C_{j,m+1} \subset C_{j,m}$. 
Moreover, $C_{j,m}$ is the union of solid tori (resp.\ $S^{n-1} \times \mathbb{D}^n$) in $\pi^{-1}(\partial R_j)$ for the case $n=2$ (resp.\ of general $n$). 
If a symplectic manifold $M$ is equipped with a Riemannian metric $g$, we can measure the radii of solid tori in $C_{j,m}$.
The radii decrease exponentially and converge to zero as $m \to \infty$, because of the same reason that radii of solid tori comprising $B_{i,m}$ decrease exponentially and converge to zero as $m \to \infty$ in Section 4.2.
Then, the limit of $c_{j,m}$ is given by 
$$C_{j,\infty} = \lim_{m \to \infty}C_{j,m} = \cap_m C_{j,m}.$$

The next step is to smooth $R_j$. 
A regular disk $R_j$ has corners.
We will replace $R_j$ with a smooth disk $R_j'$.
This is because, at the end, a Lagrangian lamination will be given as graphs of closed sections. 
By smoothing $R_j$, it will be easier to handle closed sections. 

To smooth $R_j$, we subtract a tubular neighborhood $N(\partial R_j) \subset R_j$ from $R_j$. 
Let $R_j' := R_j \setminus N(\partial R_j)$.
Then, $R_j'$ is a smooth disk.
We replace $R_j$ with $R_j'$. 
To finish smoothing, we need to determine boundary data for $R_j'$ from $c_{j,m}$.
 
Each connected component of $c_{j,m}$ can be identified wit a section of a bundle $\pi^{-1}(\partial R_j)$ over $\partial R_j$. 
We can extend this section to a closed section of a bundle $\pi^{-1}(N(\partial R_j))$ over $N(\partial R_j)$ by computations.
Then, the graph of the extended section is a Lagrangian submanifold of $\pi^{-1}(N(\partial R_j))$.
The boundary of the Lagrangian submanifold on $\partial R_j'$ makes up the boundary data for $R_j'$.

From now, we assume that a regular disk $R_j$ is a smoothed disk.
Lemma \ref{lem5} claims that for a given data $c_{j,m}$ on a smoothed regular disk $R_j$, we can construct a Lagrangian submanifold $N_{j,m} \subset \overline{\pi^{-1}(\mathring{R}_i)}$ such that $\partial N_{j,m} = c_{j,m} \cap \overline{\pi^{-1}(\mathring{R}_i)}$. 

\begin{lemma}
	\label{lem5}
	Let $Q$ be a closed subset of $\partial T^*\mathbb{D}^n$ such that there exists a Lagrangian submanifold $L \subset T^*\mathbb{D}^n$ so that $L \cap \partial T^*\mathbb{D}^n = Q$ and $L$ is a union of Lagrangian disks transverse to fibers.
	Then, we can construct a Lagrangian submanifold $L$ uniquely up to Hamiltonian isotopy through Lagrangians transverse to the fibers.
\end{lemma}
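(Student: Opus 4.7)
The plan is to decompose $L$ into connected components $L_k$, reduce each one to the graph of an exact $1$-form on $\mathbb{D}^n$, and then interpolate any two extensions of the same boundary data by a Hamiltonian isotopy that translates fibers.

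First, I would observe that each connected component $L_k$ is a Lagrangian disk transverse to every fiber of $T^*\mathbb{D}^n \to \mathbb{D}^n$; since it has the same dimension as the base, projects locally as a diffeomorphism, and has boundary lying in $T^*\mathbb{D}^n|_{\partial\mathbb{D}^n}$, it is in fact a Lagrangian section over all of $\mathbb{D}^n$. Hence $L_k$ is the graph of a closed $1$-form, and since $H^1(\mathbb{D}^n) = 0$ this $1$-form equals $df_k$ for some $f_k:\mathbb{D}^n\to\mathbb{R}$, unique up to an additive constant. The boundary
\[
Q_k := L_k \cap \partial T^*\mathbb{D}^n = \{(x, df_k(x)) : x \in \partial\mathbb{D}^n\}
\]
records the $1$-form $df_k|_{\partial\mathbb{D}^n}$ pointwise, in both its tangential and normal components.

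Existence of $f_k$ is supplied by hypothesis; constructively, one reads off $df_k|_{\partial\mathbb{D}^n}$ from $Q_k$, integrates its tangential part along $\partial\mathbb{D}^n$ to recover $f_k|_{\partial\mathbb{D}^n}$ up to an additive constant, extends across a collar so as to match the prescribed normal derivative, and then extends smoothly to the interior in any fashion. For uniqueness, let $\tilde f_k$ be a second such extension and set $g_k := \tilde f_k - f_k$, so that $dg_k|_{\partial\mathbb{D}^n} \equiv 0$. The linear interpolation $f_k^t := f_k + t\,g_k$ yields a smooth family of Lagrangian sections $\operatorname{graph}(df_k^t)$, all with boundary $Q_k$ and all transverse to fibers. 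This family is realized as a Hamiltonian isotopy by taking $H_t$ to be the pullback of $g_k$ to $T^*\mathbb{D}^n$, cut off in the fiber direction to a tubular neighborhood of $\bigcup_{s \in [0,1]}\operatorname{graph}(df_k^s)$ (chosen disjoint from the other components of $L$), and summed over $k$; its flow restricts to the fiber translation $(x,p)\mapsto(x,\,p+t\,dg_k(x))$ on each component and fixes $\partial T^*\mathbb{D}^n$ pointwise because $dg_k$ vanishes there.

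The main obstacle, I expect, is bookkeeping rather than analysis: one must verify that the pairing of components of $Q$ with components of $L$ is canonical (each $Q_k$ bounds a unique section up to the additive ambiguity in $f_k$, which is invisible at the graph level), and one must check that the cutoff Hamiltonian built component by component preserves transversality to the fibers throughout the isotopy \textemdash{} a property that follows because at every time $t$ the image is again a disjoint union of Lagrangian sections.
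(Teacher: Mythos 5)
Your reduction of each component to the graph of an exact $1$-form $df_k$ is fine (transversality to the fibers plus properness makes the projection of each disk a covering of $\mathbb{D}^n$, hence a diffeomorphism), and for a \emph{single} component your argument is complete. But the content of the lemma lies in the multi-component case, and there your uniqueness step has a genuine gap. The constraint that makes $L$ an embedded Lagrangian is that the differentials of the different components be pairwise distinct at every interior point, $d\phi_i \neq d\phi_j$ for $i \neq j$; this is exactly the condition the paper singles out. Linear interpolation $f_k^t = (1-t)f_k + t\tilde f_k$ between two systems that each satisfy this condition need not satisfy it at intermediate times: if, say, $d(f_1 - f_0)$ and $d(\tilde f_1 - \tilde f_0)$ are both nowhere vanishing but are negatives of each other at some interior point $p$, then at $t = \tfrac12$ the interpolated graphs meet over $p$, so $L_t$ is not a disjoint union of sections and your cutoff Hamiltonian (whose support was supposed to stay away from the other components) cannot be defined. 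You flag ``at every time $t$ the image is again a disjoint union of Lagrangian sections'' as something to check, but this is precisely the hard part and it is false for naive linear interpolation. The same issue infects the existence half: ``extend smoothly to the interior in any fashion'' does not guarantee pairwise distinct differentials.

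The paper's proof is organized around exactly this difficulty. It extracts from an isotopy $\Gamma$ of the boundary braid $Q$ to the trivial braid a combinatorial collection $\mathcal{C}$ of signed curves in $[r_0,1]\times S^{n-1}$ recording where and with what sign the differences $\phi_i - \phi_j$ have horizontal critical points; it constructs the functions so as to satisfy the constraints encoded by $\mathcal{C}$ (this is what guarantees $d\phi_i \neq d\phi_j$ everywhere); it then interpolates linearly only between systems satisfying the \emph{same} constraints $\mathcal{C}$, for which disjointness is preserved; and finally it removes the dependence on $\Gamma$ using the simple-connectivity of the loop space of the configuration space. To repair your argument you would need either to supply this intermediate combinatorial normalization or some other mechanism forcing the interpolation to stay embedded; as written, the proposal does not prove uniqueness.
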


To prove Lemma \ref{lem5}, we will use the following:
in Lemma \ref{lem5}, if an identification $\varphi: \partial T^*\mathbb{D}^n \stackrel{\sim}{\to} S^{n-1} \times \mathring{\mathbb{D}}^n$ is induced from a coordinate chart on $\mathbb{D}^n$, $\varphi(Q)$ represent the trivial braid because $L$ is a union of Lagrangian disks.  

\begin{proof}[Proof  of Lemma \ref{lem5}]
	The proof of Lemma \ref{lem5} consists of two parts, the construction of $L$ and the uniqueness of $L$.
	
	\vskip.2in
	\noindent{\em Construction.}
	We start the proof with the simplest case, i.e., when $Q$ is connected.
	In other words, $Q$ represents the braid with only one strand.

	By fixing coordinate charts on $\mathbb{D}^n$, we can write down $Q$ as a section of a disk bundle $\partial T^*\mathbb{D}^n$ over $\partial \mathbb{D}^n$, i.e.,
	$$ Q:= \{ f_1(x_1,\cdots,x_n)dx_1 + \cdots + f_n(x_1,\cdots,x_n)dx_n \hspace{0.2em} | \hspace{0.2em} x_1^2 +  \cdots + x_n^2 =1 \}.$$
	Then, the simplest case is proved by determining a function $\phi: \mathbb{D}^n \to \mathbb{R}$ such that $d \phi = f_1dx_1 + \cdots + f_ndx_n$ on $\partial \mathbb{D}^n$. 
	The graph of $d \phi$ is a Lagrangian submanifold which we would like to find. 
	Note that there are infinitely many $\phi$ satisfying the conditions, but the Hamiltonian isotopy class of the graph of $d\phi$ is unique through Lagrangians transverse to the fibers.
	
	If $Q$ has 2 or more connected components $l_i$, then we can write $l_i$ as a section over $\partial \mathbb{D}^n$. 
	For each $i$, we need to determine functions $\phi_i : \mathbb{D}^n \to \mathbb{R}$ such that $d \phi_i$ agrees with $l_i$ on $\partial \mathbb{D}^n$. 
	Moreover, to avoid self-intersection, we need $d \phi_i \neq d\phi_j$ for all $i \neq j$ everywhere. 
	Then, the union of graphs of $d \phi_i$ on $T^*\mathbb{D}^n$ is a Lagrangian submanifold $L$ which we want to construct.
	
	We discuss with the simplest non-trivial case, i.e., $Q$ has two connected components $l_0$ and $l_1$, and the dimension $2n =4$. 
	Without loss of generality, we assume that $l_0$ is the zero section.
	Then, we can assume that $\phi_0 \equiv 0$. 	
	We only need to determine $\phi_1$ such that $d \phi_1$ does not vanish everywhere. 
	
	We assume that there exists $\phi_1$ satisfying the conditions.
	Then, we will collect combinatorial data from $\phi_1$, and we will construct a function $\tilde{\phi}_1$ satisfying conditions, from the combinatorial data.
	Through this, we will see what combinatorial data we need.
	We will end the construction part by obtaining the combinatorial data from the given $Q$.
	 
	For convenience, we will use the polar coordinates instead of the $(x,y)$-coordinate on $\mathbb{D}^2$.
	Let $r_0$ be a small positive number. 
	We restrict the function $\phi_1$ on $[r_0,1] \times S^1$.
	On $\{1\} \times S^1 = \partial \mathbb{D}^2$ agrees with $l_1$.
	On $\{r_0\} \times S^1, d\phi_1$ is approximately a constant section $a dx + b dy = a( \cos \theta dr - r_0 \sin \theta d \theta) + b(\sin \theta dr +r_0 \cos \theta d \theta)$, where $d \phi_1(0,0) = a dx + b dy$ and $(x,y)$ are the standard coordinate charts of $\mathbb{D}^2$.
	We remark that on $\{r_0\} \times S^1$, the pair of graphs of $d\phi_i|_{\{r_0\}\times S^1}$ represents the trivial braid under the identification induced from the $(x,y)$-coordinates.  
	Then, the pair $(d\phi_0 \equiv 0, d\phi_1)$ implies an isotopy between two representatives of the trivial braid on $[r_0,1] \times S^1$. 
	
	For every $r_* \in [r_0,1]$, we can find all local maxima and minima of a function 
	$$ \theta \mapsto \phi_1(r_*,\theta).$$
	We mark $(r_*, \theta_*)$ as a red (resp.\ blue) point if the above function has a local maxima (resp.\ minima) at $\theta_*$. 
	If $r_*=1$, there are same number of red/blue marked points on $\{1\} \times S^1$, and there are only one red/blue marked point on $\{r_0\} \times S^1$.
	On $[r_0,1] \times S^1$, we have a collection $\mathcal{C}$ of curves shaded red and blue. 
	If a curve in $\mathcal{C}$ is not a circle, then the curve has two end points on the boundary of $[r_0,1] \times S^1$. 
	There are exactly two curves connecting both boundary components of $[r_0,1] \times S^1$, and those two curves have end points of the same color.
	 
	If we write $d \phi_1 = f d\theta + g dr$, then $f$ is zero on curves in $\mathcal{C}$. 
	Since $d \phi_1$ does not vanish, $g$ cannot be zero on the curves. 
	Thus, we can assign the sign of $g$ for each curve.
	Figure \ref{local max and min} is an example of a collection $\mathcal{C}$. 
	\begin{figure}[h]
		\centering
		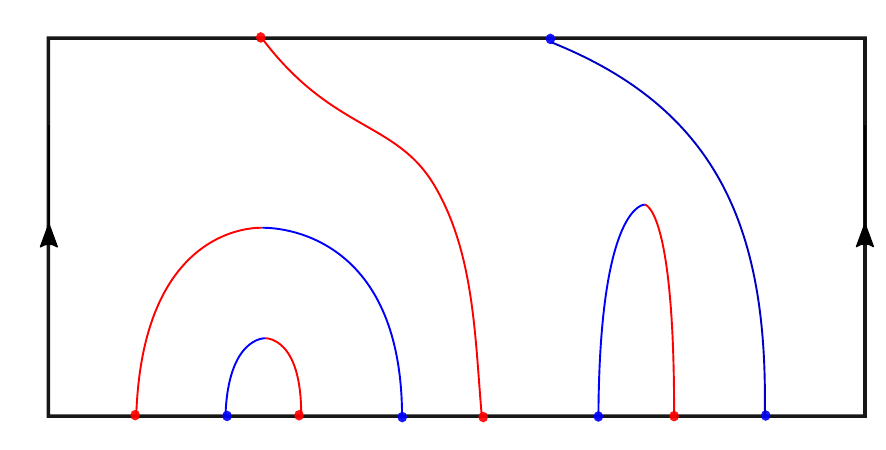
		\caption{Example of a collection $\mathcal{C}$ on $[r_0,1] \times S^1$.}
		\label{local max and min}
	\end{figure}
	
	Conversely, if we have a collection $\mathcal{C}$ of curves such that each curve is shaded red and blue and is equipped with a sign, then we can draw a graph of $\tilde{\phi}_1$ roughly. 
	This is because, the collection $\mathcal{C}$ determines the sign of horizontal directional derivative of $\tilde{\phi}_1$, i.e., $d\tilde{\phi}_1(\partial_{\theta})$ on every point of $[r_0,1] \times S^1$, and vertical directional derivative of $\tilde{\phi}_1$, i.e., $d\tilde{\phi}_1(\partial_r)$ on the curves. 
	From these, one obtains a (rough) graph of $\tilde{\phi}_1$.
	Thus, in order to determine a function $\phi_1$, it is enough to determine a collection $\mathcal{C}$ of curves in $[r_0,1] \times S^1$ from the given $Q$. 
	
	For the given $Q$, we assume that a connected component $l_0$ of $Q$ is the zero section without loss of generality. 
	For the other connected component $l_1$, one has $f_1, g_1:S^1 \to \mathbb{R}$ such that $l_1$ is the graph of $f_1 d\theta + g_1 dr$ on $\{1\} \times S^1 = \partial \mathbb{D}^2$. 
	We know that $Q$ represents the trivial braid with respect to the standard $(x,y)$-coordinate of $\mathbb{D}^2$.
	Thus, there is an isotopy $\Gamma : [r_0,1] \times S^1 \to \mathbb{D}^2$ such that 
	\begin{gather*}
	\Gamma(1, \theta) = (f(\theta), g(\theta)), \hspace{0.5em} \Gamma(r_0,\theta) = (A r_0 \cos \theta, A \sin \theta)\\
	\Gamma(t,\theta) \neq (0,0) \text{  for all  } (t,\theta) \in [r_0,1] \times S^1,
	\end{gather*}
	where $A$ is a constant.
	
	For every $r \in [r_0,1]$, let $\gamma_r (\theta) = \Gamma(r, \theta)$.
	Then, $\gamma_r$ is a closed curve in $\mathbb{D}^2$, for all $r$.
	Moreover, $\Gamma$ is a path connecting $\gamma_1$ and $\gamma_{r_0}$ in the loop space of $\mathring{\mathbb{D}}^2$ without touching the origin. 
	
	We mark $(r,\theta)$ on $[r_0,1] \times S^1$ as a red (resp.\ blue) point if $\gamma_r(\theta)$ intersects $dr$-axis from right to left (resp.\ from left to right).
	These marked points comprise curves in $[r_0,1] \times S^1$, and we have a collection $\mathcal{C}$ of curves, shaded red and blue, in $[r_0,1] \times S^1$.
	We know that $\gamma_1$ has an even number of intersection points.
	When $r$ decreases, there is a series of creations/removes of intersection points, which are given by finger moves along $dr$-axis.
	Each finger move does not touch the origin.
	Thus, for a curve in $\mathcal{C}$, every intersection point composing the curve lies on either the positive $dr$-axis or the negative $dr$-axis.
	Then, we can assign a sign for each curve in $\mathcal{C}$.
	
	Figure \ref{homotopy between loops} is an example of $\Gamma$, corresponding to the case described by Figure \ref{local max and min}. 
	\begin{figure}[h]
		\centering
		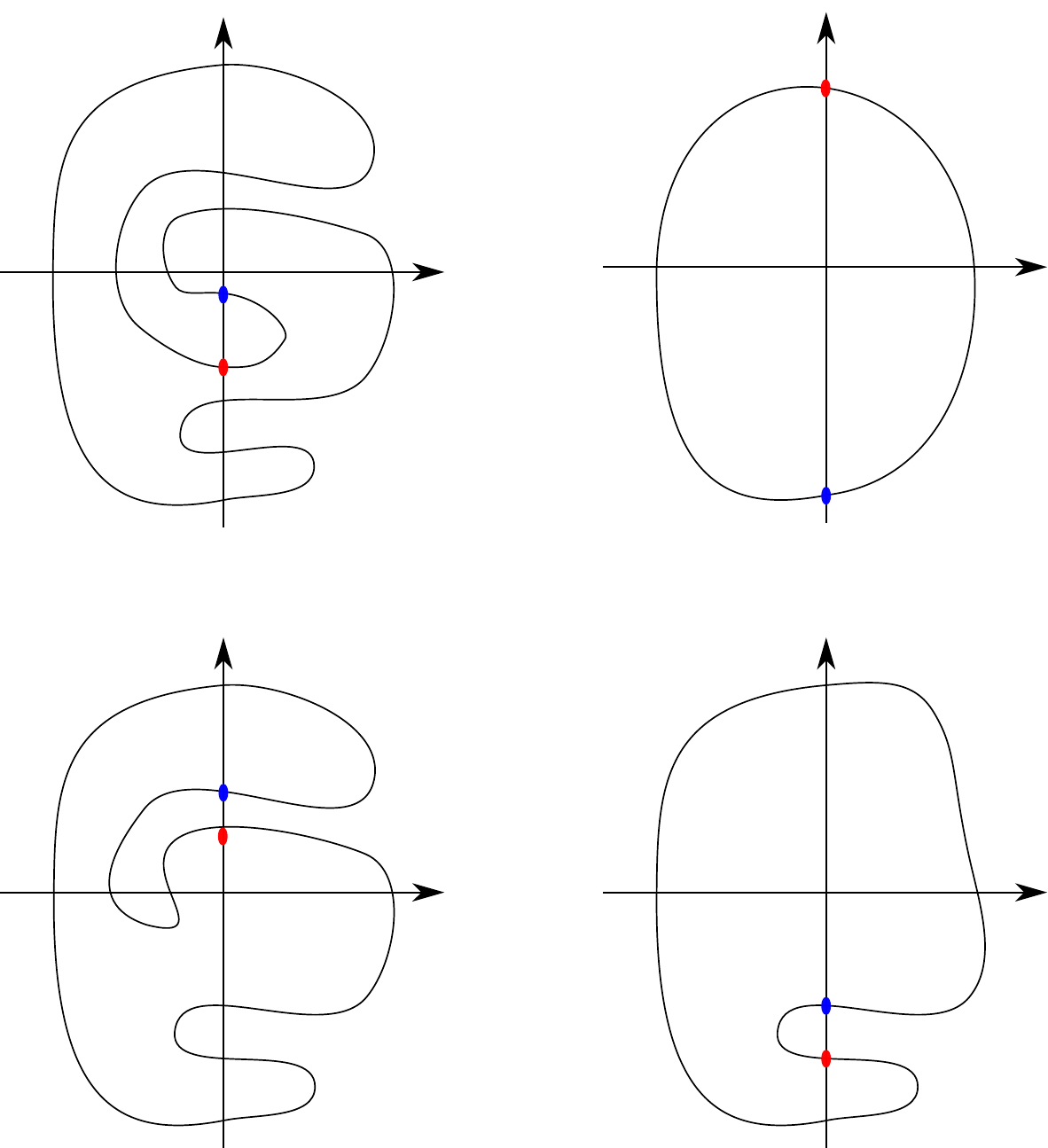
		\caption{Creation of a collection $\mathcal{C}$.}
		\label{homotopy between loops}
	\end{figure}	
	The upper left of Figure \ref{homotopy between loops} is $\gamma_1$ and the upper right is $\gamma_{r_0}$.
	Through the first arrow, we observe a finger move removing two intersection points. 
	Those two intersection points correspond to $m_2$, a local maxima shaded red, and $n_2$, a local minima shaded blue.
	Thus, we obtain a curve connecting $m_2$ and $n_2$ in Figure \ref{local max and min}.
	Moreover, the intersection points lie in the negative part of the $dr$-axis.
	Thus, we assign a negative sign to the curve.
	Similarly, we observe there are finger moves removing intersection points.
	We obtain curves connecting $m_i$ and $n_i$ for $i=1, 2$, and $3$ in Figure \ref{local max and min}. 
	After the finger moves, there are only two intersection points corresponding to $m_*$ and $n_*$, and we obtain curves connecting $m_4$(resp.\ $n_4$) and $m_*$(resp.\ $n_*$). 
	
	We have constructed a collection $\mathcal{C}$ of curves on $[r_0,1] \times S^1$ from an isotopy $\Gamma$.
	Thus, we can obtain a function $\phi_1 : [r_0,1] \times S^1 \to \mathbb{R}$. 
	In order to complete the proof, we need to extend $\phi_1$ into a small disk with radius $r_0$. 
	We have 
	$$\phi_1(x,y) = A r \sin \theta = Ay$$
	on the small disk.
	
	The situation for the general case is analogous.
	If $Q$ has more connected components $l_i$ for $i = 0, \cdots, k$, then we have to determine $\phi_i: \mathbb{D}^2 \to \mathbb{R}$ such that $d\phi_i = l_i$ on $\partial \mathbb{D}^2$, and $d\phi_i \neq d\phi_j$ for all $i \neq j$. 
	We fix an isotopy $\Gamma$, and obtain a collection $\mathcal{C}$ of curves on $[r_0,1] \times S^1$ from $\Gamma$.
	Each curve in $\mathcal{C}$ encodes restrictions on $d\phi_i - d\phi_j$ for some $i$ and $j$.
	More precisely,	$(\phi_i-\phi_j)$ has a local maxima (resp.\ minima) in the horizontal direction, only at a point of a curve shaded red (resp.\ blue), and $(d\phi_i-d\phi_j)(\partial_{r})$ has the sign assigned on the curve. 
	For the case of general dimension $2n$, we obtain combinatorial data from $Q$, i.e., a collection of curves on $[r_0, 1] \times S^{n-1}$ assigned a sign, and construct functions on $\mathbb{D}^n$ from the combinatorial data.
	
	\vskip.2in
	\noindent{\em Uniqueness.}
	Recall that the construction consists of three steps. 
	First, we choose an isotopy $\Gamma$ connecting $Q$ and the trivial representative of the trivial braid. 
	Then, we obtained a collection $\mathcal{C}$ of curves from $\Gamma$, such that each curve encodes restrictions on $d \phi_i - d \phi_j$.
	The last step is to construct a set of functions $\{ \phi_i : \mathbb{D}^n \to \mathbb{R}\}$.
	
	The construction depends on choices in the first and last steps.
	More precisely, for the first step, the choice of isotopy $\Gamma$ is not unique. 
	If we choose an isotopy $\Gamma$, then there is a unique collection $\mathcal{C}$. 
	However, a set $\{\phi_i\}$ of functions, which is constructed from the collection $\mathcal{C}$, is not unique. 
	We will show that the Hamiltonian isotopy class of $L$, through Lagrangians transverse to the fibers, is independent to those choices. 
	
	First, we discuss the choice in the third step. 
	Let us assume that we have a collection $\mathcal{C}$ of curves in $[r_0,1] \times S^{n-1}$  and two sets of functions $\{\phi_i \}_i$ and $\{\zeta_i \}_i$ satisfying the restrictions encoded by $\mathcal{C}$.
	Then, by setting $\eta_{i,t} := (1-t)\phi_i + t \zeta_i$, we obtain a family of sets of functions such that every member of the family satisfies the restrictions encoded by $\mathcal{C}$. 
	
	Let $L_t$ be the Lagrangian submanifold corresponding to $\{\eta_{i,t}\}$ for a fixed $t$.
	Then, $L_t$ is a Lagrangian isotopy connecting $L_0$, corresponding to $\{\phi_i \}$, and $L_1$, corresponding to $\{\zeta_i \}$. 
	Since $L_t$ is a disjoint union of Lagrangian disks in $T^*\mathbb{D}^n$,
	$L_0$ and $L_1$ are Hamiltonian isotopic.
	Thus, the Hamiltonian class of $L$ through Lagrangians transverse to the fibers is independent of the choice of functions for the third step of the construction.
	
	Before discussing the choice of the first step, note that a continuous change on a collection $\mathcal{C}$ does not make a change on the Hamiltonian isotopy class.
	More precisely, let $\mathcal{C}_0= \{ \gamma_1, \cdots, \gamma_N \}$ be a collection of curves and let $\{\phi_i\}$ be a set of functions corresponding to $\mathcal{C}_0$. 
	If $\{\gamma_{k,t}\}$ is a continuous family of curves with respect to $t$ such that $\gamma_{k,0} = \gamma_k$ for all $k$, then we can obtain a continuous family $\{\phi_{1,t}, \cdots, \phi_{N,t}\}$ such that $\phi_{i,0} = \phi_i$ and $\{\phi_{1,t}, \cdots, \phi_{N,t}\}$ corresponds to $\mathcal{C}_t := \{\gamma_{1,t}, \cdots, \gamma_{N,t} \}$.
	Then, it is easy to check that the Hamiltonian isotopy class of the union of graphs of $d\phi_{i,t}$ in $T^*\mathbb{D}^n$, through Lagrangians transverse to the fibers, is independent to $t$. 
	
	Finally, we will discuss the choice of $\Gamma$.
	Let $\Gamma_0$ and $\Gamma_1$ be two isotopies obtained from the given $Q$ in the first step. 
	Then, we can understand $\Gamma_0$ and $\Gamma_1$ as paths on the loop space of the configuration space of $\mathring{\mathbb{D}}^n$.
	Since the loop space is simply connected, there is a continuous family $\{\Gamma_t\}_{t \in [0,1]}$ connecting $\gamma_0$ and $\gamma_1$. 
	
	Let $\mathcal{C}_t$ be the collection of curves obtained from $\Gamma_t$ and let $\{ \phi_i \}$ be a set of functions constructed from $\mathcal{C}_0$.
	There is $\{\phi_{i,t}\}$ corresponding to $\mathcal{C}_t$ such that $\phi_{i,0} = \phi_i$. 
	Then, if $L_t$ is the union of graphs of $d\phi_{i,t}$, then the Hamiltonian class of $L_t$ is independent to $t$. 
	This shows the uniqueness of $L$, up to Hamiltonian isotopy, through Lagrangians transverse to the fibers.   
\end{proof}

For a smoothed regular disk $R_j$, there is a sequence of data $c_{j,m}$ for each $m \in \mathbb{N}$.
Then, we can construct a sequence of Lagrangian submanifolds $N_{j,m} \subset \overline{\pi^{-1}(\mathring{R}_j)}$ such that $N_{j,m} \cap \partial \overline{\pi^{-1}(\mathring{R}_j)} = c_{j,m}$. 
The following lemma, Lemma \ref{lem6}, claims that we can construct $N_{j,m}$ wisely, so that $N_{j,m}$ converges to a Lagrangian lamination $\mathcal{N}_j$ as $m$ goes to $\infty$.

\begin{lemma}
	\label{lem6}
	It is possible to construct $N_{j,m} \subset \overline{\pi^{-1}(\mathring{R}_j)}$ so that the sequence $N_{j,m}$ converges to a Lagrangian lamination $\mathcal{N}_j \subset \overline{\pi^{-1}(\mathring{R}_j)}$ as $m \to \infty$. 
\end{lemma}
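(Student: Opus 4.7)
The plan is to construct the sequence $\{N_{j,m}\}_{m \in \mathbb{N}}$ inductively, exploiting the nested structure $C_{j,m+1} \subset C_{j,m}$ of exponentially shrinking solid tori together with the freedom granted by the ``uniqueness up to Hamiltonian isotopy through Lagrangians transverse to the fibers'' clause of Lemma \ref{lem5}. The goal is to arrange that $N_{j,m+1}$ is a refinement of $N_{j,m}$ in the sense that each disk component of $N_{j,m}$ is replaced by a bundle of parallel Lagrangian disks sitting inside a controlled neighborhood of it.

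First, for each $m$, apply Lemma \ref{lem5} with $Q = c_{j,m}$ to obtain some $N_{j,m} \subset \overline{\pi^{-1}(\mathring{R}_j)}$ with $\partial N_{j,m} = c_{j,m}$. The key improvement over the raw application is to refine the choice: since $c_{j,m} \subset C_{j,m}$ and each connected component of $C_{j,m}$ is a solid torus of radius shrinking exponentially in $m$, I would use the combinatorial data (the collection $\mathcal{C}$ of curves with red/blue/sign decoration constructed in the proof of Lemma \ref{lem5}) to produce $N_{j,m}$ whose connected components have $C^0$-size comparable to the corresponding solid torus containing their boundary. Concretely, each potential function $\phi_i$ built in the proof of Lemma \ref{lem5} can be rescaled so that $\|d\phi_i\|$ is of the same order as the radius of the solid torus in $C_{j,m}$ containing the corresponding strand of $c_{j,m}$.

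Second, to set up convergence, I would carry out the induction so that $N_{j,m+1}$ lies within an exponentially small neighborhood of $N_{j,m}$. Since each solid torus component $T$ of $C_{j,m}$ contains finitely many solid tori of $C_{j,m+1}$, I would choose the disks of $N_{j,m+1}$ whose boundaries sit in these sub-tori to all lie inside a tubular neighborhood of the unique disk component of $N_{j,m}$ that fills $T$. This is possible because within each $T$ the Hamiltonian isotopy class from Lemma \ref{lem5} is unique: the refinement is achieved by a Hamiltonian flow supported on a neighborhood of the disk. Iterating, the sequence $N_{j,m}$ is Cauchy in the Hausdorff metric induced by any Riemannian metric $g$ on $M$, and I define
\begin{equation*}
\mathcal{N}_j := \lim_{m \to \infty} N_{j,m} = \bigcap_{m \in \mathbb{N}} \overline{\bigcup_{k \geq m} N_{j,k}}.
\end{equation*}

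Third, to verify that $\mathcal{N}_j$ is a genuine Lagrangian lamination, I would check the local product structure: for a point $x \in \mathcal{N}_j$, the nested solid tori of the $C_{j,m}$ provide a well-defined transversal Cantor-like set in a fiber $F_p$ at $p = \pi(x)$, and the collection of limit disks through these transversal points form the leaves. Each leaf is Lagrangian because it is a $C^0$-limit of Lagrangian disks that, by construction, remain expressible as graphs of closed $1$-forms over a fixed disk (via the parametrization in Lemma \ref{lem5}); such a $C^0$-limit is automatically Lagrangian as long as the closed sections converge, which is guaranteed by our exponential-shrinkage choice of $N_{j,m}$. The chart condition $I^n \times I^{n-k}$ in the definition of a lamination then follows from the continuous transverse parametrization.

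The hard part will be the inductive coherence in the second step: when subdividing a disk component of $N_{j,m}$ into sub-disks of $N_{j,m+1}$, the combinatorial data $\mathcal{C}$ from Lemma \ref{lem5} must be compatible across scales, i.e., the red/blue/sign decorations governing the relative positions of the sub-disks must refine the data at the coarser scale. This requires a careful choice of the isotopies $\Gamma$ in Lemma \ref{lem5} that is functorial under the subdivision induced by $\psi_H$. Handling this in arbitrary dimension $n$, where the combinatorial picture of Lemma \ref{lem5} generalizes from planar diagrams to curves in $[r_0,1] \times S^{n-1}$, is where the bulk of the technical work lies.
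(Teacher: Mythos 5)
Your proposal is correct and follows essentially the same route as the paper: both arguments inductively apply Lemma \ref{lem5}, at stage $m+1$ working inside a tubular neighborhood $N(\phi_{s,m})\simeq T^*\mathbb{D}^n$ of each stage-$m$ disk whose boundary is the corresponding solid torus of $C_{j,m}$ and whose size is $O(r^m)$, so that the potentials form a Cauchy sequence and their limits give the leaves of $\mathcal{N}_j$. The ``inductive coherence of the combinatorial data across scales'' that you flag as the hard part is dissolved in the paper's version by simply reapplying Lemma \ref{lem5} from scratch inside each smaller cotangent-bundle neighborhood — no compatibility of the decorated curve collections $\mathcal{C}$ beyond containment of the resulting disks is ever needed.
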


\begin{proof}
	Let the boundary condition $c_{j,m}$ be the set $\{ l_{1,m}, \cdots, l_{{N_m},m} \}$, where $l_{i,m}$ is a connected component of $c_{j,m}$, or equivalently, $l_{i,m}$ is a strand of the braid represented by $c_{j,m}$.  
	We defined $C_{j,m}$ as a disjoint union of solid tori in $\pi^{-1}(\partial R_j)$ at the beginning of the present subsection. 
	Then, we can divide $c_{j,m}$ into a partition, so that $l_{i,m}$ and $l_{j,m}$ are in the same subset if and only if $l_{i,m}$ and $l_{j,m}$ are in the same solid torus (resp.\ $S^{n-1} \times \mathbb{D}^n$ for a higher dimensional case) in $C_{j,m}$. 
	After that, we randomly choose a connected component $l_{s,m}$ from each subset of the partition. 
	
	By Lemma \ref{lem5}, there is $\phi_{s,m}: R_j \to \mathbb{R}$ such that $d\phi_{s,m} =l_{s,m}$ on $\partial R_j$.
	Then, the graph of $d\phi_{s,m}$ is a Lagrangian disk in $\overline{\pi^{-1}(\mathring{R}_i)}$. 
	We can choose a neighborhood $N(\phi_{s,m})$ of the graph of $d\phi_{s,m}$ in $\overline{\pi^{-1}(\mathring{R}_i)}$, such that $N(\phi_{s,m}) \simeq T^*\mathbb{D}^n$ and $N(\phi_{s,m}) \cap \pi^{-1}(\partial R_j)$ is the torus in $C_{j,m}$ containing $l_{s,m}$.
	Moreover, we can assume that 
	$$d_H(N(\phi_{s,m}), \text{the graph of }d\phi_{s,m}) < 2 r^m,$$ 
	where $d_H$ is the Hausdorff metric induced by a fixed Riemannian metric.
	
	We apply Lemma \ref{lem5} to $\{ l_{t,m+1} \in c_{j,m+1} \hspace{0.2em} | \hspace{0.2em} l_{t,m+1} \subset N(\phi_{s,m}) \}$ in $N(\phi_{s,m}) \simeq T^*\mathbb{D}^n$.
	Then, we can construct $\phi_{t,m+1} : R_j \to \mathbb{R}$ such that $d\phi_{i,m+1} = l_{t,m+1}$ on $\partial R_j$ and the graph of $d\phi_{t,m+1}$ is contained in $N(\phi_{s,m+1})$.
	We repeat this procedure inductively on $m \in \mathbb{N}$.
	
	Let $l$ be a strand of $C_{j,\infty}$.
	Then, there is a sequence $l_{i_m,m} \in c_{j,m}$ such that $l_{i_m,m}$ converges to $l$. 
	If we construct $\phi_{i,m}$ by repeating the above procedure, we know that $$d_H(d\phi_{i_m,m}, d\phi_{i_n,n}) < 4 r^{\max(m,n)}.$$
	Thus, $d\phi_{i_m,m}$ converges.
	Moreover, by assuming that $\phi_{i,m}(p) = 0$ for every $i$ and $m$, where $p$ is a center of $R_j$, $\phi_{i_m,m}$ converges to a function $\phi$.
	The graph of $d\phi$ is a Lagrangian disk in $\overline{\pi^{-1}(\mathring{R}_j)}$ such that whose boundary is $l$, the stand of $C_{j,\infty}$. 
	The union of graphs of $d\phi$ is the Lagrangian lamination $\mathcal{N}_j$ which $N_{j,m}$ converges to.
\end{proof}

\begin{proof}[Proof of Theorem \ref{lamination thm}]
	By Lemma \ref{lem4}, there is a Lagrangian lamination $\mathcal{L}_i$ in $\overline{\pi^{-1}(\mathring{S}_i)}$ and by Lemma \ref{lem6}, there is a Lagrangian lamination $\mathcal{N}_j$ in $\overline{\pi^{-1}(\mathring{R}_j)}$.
	Moreover, every Lagrangian lamination agrees with each other along boundaries, thus we can glue them.
	Then we obtain a Lagrangian lamination $\mathcal{L}$ in $M$. 
\end{proof}

\subsection{A generalization}
In the previous sections, we assumed that $\psi$ is of generalized Penner type. 
In the present subsection, we discuss a symplectic automorphism $\psi : (M,\omega) \to (M,\omega)$, not necessarily to be of generalized Penner type, with some assumptions.

First, we assume that there is a Lagrangian branched submanifold $\mathcal{B}_{\psi}$ such that $\psi(\mathcal{B}_{\psi})$ is (weakly) carried by $\mathcal{B}_{\psi}$. 
The proof of Lemma \ref{lem1} carries over with no change.
Thus, if a Lagrangian submanifold $L$ is (weakly) carried by $\mathcal{B}_{\psi}$, then $\psi(L)$ is carried by $\mathcal{B}_{\psi}$. 

As mentioned in Section 4.1, we assume that $\mathcal{B}_{\psi}^*$ admits a decomposition into a union of finite number of singular disks $S_i \simeq \mathbb{D}^n$ and regular disks $R_j \simeq \mathbb{D}^n$. 

\begin{proof}[Proof of Theorem \ref{generalized theorem}]
	First, we define data on the boundary of each singular and regular disk, in the same way we did for the case of $\psi$ of generalized Penner type.
	Then, on a regular disk $R_j$, the proofs of Lemma \ref{lem5} and Lemma \ref{lem6} carry over with no change. 
	Thus, we can construct a Lagrangian lamination on $\pi^{-1}(R_j)$. 
	
	On a singular disk $S_i$, we define the boundary data in the same way. 
	In other words, the boundary data is defined by the isotopy class of $ \psi^m(L) \cap \pi^{-1}(\partial S_i)$. 
	We also can obtain a matrix $\Psi$, which explains how the sequences of braids are constructed inductively. 
	However, the rest of the proof of Lemma \ref{lem4} does not carry over.
	This is because in the proof of Lemma \ref{lem4}, functions of trivial type have a key role. 
	To use the same proof, we need to show that there are enough functions of trivial type. 
	However, the assumptions cannot imply the existence of enough functions of trivial type. 
	
	For a singular disk $S_i$, let $\{f_m\}_{m \in \mathbb{N}}$ be a strand of the limit braid on $S_i$. 
	We note that each strand can be identified to an infinite sequence of functions.
	We forget specific functions $f_m$, but remember their types.
	Then, we obtain a sequence of types. 
	The sequence of types determines the ``shape'' of strand, for example, how many times the strand is rotated.
	
	We can construct a symplectomorphism $\phi$ which is of generalized Penner type such that $\mathcal{B}_{\phi}$ has a singular disk $S$ so that the limit braid assigned on $S$ has a strand of the same shape. 
	In Section 4.3, we constructed a Lagrangian submanifold $L_0 \subset \overline{\pi^{-1}(\mathring{S})}$ such that $\partial L_0$ is the strand. 
	Since $\overline{\pi^{-1}(\mathring{S})} \simeq \overline{\pi^{-1}(\mathring{S}_i)}$, we assume that $L_0$ is a Lagrangian submanifold in $\overline{\pi^{-1}(\mathring{S}_i)}$ and $\partial L_0$ has the same shape to the strand which we choose. 
	By scaling and translating $L_0$ inside $\overline{\pi^{-1}(\mathring{S}_i)}$, we obtain a Lagrangian submanifold whose boundary agrees with the strand. 
		
	The rest of the proof is the same as the proof of Theorem \ref{lamination thm}.
\end{proof}

\section{Application on the Lagrangian Floer homology}
\label{section pseudo-Anosov functors}
In this section, we will give an application of the previous sections on Lagrangian Floer homology.
More precisely, we will prove Theorem \ref{thm Lagrangian floer homology} and give an example in Section \ref{subsection pA functors - lemma}.

\subsection{Setting}
\label{subsection pA functors - setting}
In the present subsection, we will explain terminology in Theorem \ref{thm Lagrangian floer homology}.

In Section \ref{section pseudo-Anosov functors}, we assume that our symplectic manifold $M$ is a plumbing space $M = P(\alpha_1, \cdots, \alpha_m, \beta_1, \cdots, \beta_l)$ of Penner type defined as follows:
\begin{definition}
	\label{def plumbing space of Penner type}
	A plumbing space $M =P(\alpha_1, \cdots, \alpha_m, \beta_1, \cdots, \beta_l)$ is of {\em Penner type} if $\alpha_i$ and $\beta_j$ satisfy
	\begin{enumerate}
		\item $\alpha_1, \cdots, \alpha_m$ and $\beta_1, \cdots, \beta_l$ are $n$-dimensional spheres,
		\item $\alpha_i \cap \alpha_j = \varnothing$, and $\beta_i \cap \beta_j =\varnothing$, for all $i \neq j$.
	\end{enumerate} 
\end{definition}
Note that $P(\alpha_1, \cdots, \alpha_m, \beta_1, \cdots, \beta_l)$ is defined in Section 2.1.

From now on, we will define an involution $\eta : M \stackrel{\sim}{\to} M$, which is associated to $M$. 
\vskip0.2in

\noindent{\em Involution $\eta_0$ on $T^*S^n$ :}
First, we will define an involution $\eta_0$ on $T^*S^n$. 
Let 
\begin{gather*}
S^n = \{ x \in \mathbb{R}^{n+1} \hspace{0.2em} | \hspace{0.2em} |x|=1 \}, \\
T^*S^n = \{ (x,y) \in S^n \times \mathbb{R}^{n+1} \hspace{0.2em} | \hspace{0.2em} x \in S^n, <x,y>= 0 \}.
\end{gather*}
Then, we define $\eta_0 : T^*S^n \stackrel{\sim}{\to} T^*S^n$ as follow:
\begin{gather*}
\eta_0(x_1, \cdots, x_{n+1}, y_1, \cdots, y_{n+1}) = (x_1, x_2, -x_3, \cdots, -x_{n+1}, y_1, y_2, -y_3, \cdots, -y_{n+1}).
\end{gather*}

Let 
\begin{gather*}
S =\{ (\cos \theta, \sin \theta, 0, \cdots, 0) \hspace{0.2em} | \hspace{0.2em} \theta \in [0, 2\pi] \} \subset S^n, \\
T^*S = \{ (\cos \theta, \sin \theta, 0, \cdots, 0, -r \sin \theta, r \cos \theta, 0, \cdots, 0) \hspace{0.2em} | \hspace{0.2em} \theta \in [0, 2\pi], r \in \mathbb{R} \} \subset T^*S^n. 
\end{gather*}
Then, it is easy to check that $T^*S$ is the set of fixed points of $\eta_0$, equivalently, $\eta^{fixed}_0 = T^*S$.
\vskip0.2in

\noindent{\em Involution $\eta$ associated to $M$ :}
First, we will construct an involution $\eta_{\alpha_i}$ and $\eta_{\beta_j}$ on $T^*\alpha_i$ and $T^*\beta_j$ for every $i$ and $j$.
Note that $T^*\alpha_i, T^*\beta_j \subset M$.  

For each $\alpha_i$, we will choose an embedded circle $S_{\alpha_i} \subset \alpha_i$ such that $S_{\alpha_i}$ contains every plumbing point of $\alpha_i$. 
Then, there is a symplectic isomorphism $\phi_{\alpha_i} : T^*S^n \stackrel{\sim}{\to} T^*\alpha_i$ such that $\phi_{\alpha_i}(S^n)= \alpha_i$ and $\phi_{\alpha_i}(S) = S_{\alpha_i}$. 
One obtains an involution $\eta_{\alpha_i} : T^*\alpha_i \stackrel{\sim}{\to} T^*\alpha_i$ by setting 
\begin{gather*}
\eta_{\alpha_i} := \phi_{\alpha_i} \circ \eta_0 \circ (\phi_{\alpha_i})^{-1}.
\end{gather*}
Similarly, one obtains an involution $\eta_{\beta_j} : T^*\beta_j \stackrel{\sim}{\to} T^*\beta_j$ in the same way.

Without loss of generality, one can assume that $\eta_{\alpha_i}(x) = \eta_{\beta_j}(x)$ for every $x \in T^*\alpha_i \cap T^*\beta_j$.
Finally, the involution $\eta : M \stackrel{\sim}{\to} M$ is defined as follows:
\begin{gather*}
\eta(x) := 
\left\{\begin{matrix}
\eta_{\alpha_i}(x) \text{  if  } x \in T^*\alpha_i,\\
\eta_{\beta_j}(x) \text{  if  } x \in T^*\beta_j. 
\end{matrix}\right.
\end{gather*}
We will call $\eta$ {\em the involution associated to $M$}.

\begin{remark}
	\label{rmk properties of involution}
	Let $\tilde{M}$ be the set of fixed points of $\eta$, i.e., $\tilde{M} = \{ x \in M \hspace{0.2em} | \hspace{0.2em} \eta(x) = x \}$.
	It is easy to check that $\tilde{M}$ is a $2$--dimensional symplectic submanifold of $M$. 
	Moreover, $\tilde{M}$ is symplectomorphic to a plumbing space $P(S_{\alpha_1}, \cdots, S_{\alpha_m}, S_{\beta_1}, \cdots, S_{\beta_l})$ of Penner type. 
	Note that $S_{\alpha_i}$ and $S_{\beta_j}$ are embedded circles in $\alpha_i$ and $\beta_j$. 
	
	We call $\tilde{M}$ {\em the fixed surface of $M$}. 
\end{remark}

\subsection{Proof of Theorem \ref{thm Lagrangian floer homology}}
\label{subsection pA functors - proof}

Let $M$ be a plumbing space of Penner type.
Let $\eta$ be the associated involution of $M$.
Let $L_0$ and $L_1$ be a transversal pair of Lagrangian submanifolds such that 
\begin{enumerate}
	\item $\eta(L_i) = L_i$.
	\item Let $\tilde{L}_i = L_i \cap M_i$. Then, $\tilde{L}_i$ is a Lagrangian submanifold of $\tilde{M}$. 
	\item $L_0 \cap L_1 = \tilde{L}_0 \cap \tilde{L}_1$. 
	\item $L_0$ and $L_1$ are not isotopic to each other.
\end{enumerate}
We will compute $\mathbb{Z}/2$--graded Lagrangian Floer homology $HF^*(L_0,L_1)$ over the Novikov field $\Lambda$ of characteristic 2. 
To do this, we will prove that chain complexes $CF^*(L_0,L_1)$ and $CF^*(\tilde{L}_0,\tilde{L}_1)$ are the same chain complexes. 
More precisely, we will show that those two chain complexes have the same generators and the same differential maps. 

First, it is easy to show that $CF^*(L_0,L_1)$ and $CF^*(\tilde{L}_0,\tilde{L}_1)$ have the same generators since $L_0$ and $L_1$ satisfy that $L_0 \cap L_1 = \tilde{L}_0 \cap \tilde{L}_1$.
Thus, $CF^*(L_0,L_1) = CF^*(\tilde{L}_0,\tilde{L}_1)$ as vector spaces.

Second, let $\partial$ (resp.\ $\tilde{\partial}$) denote the differential map on $CF^*(L_0,L_1)$ (resp.\ $CF^*(\tilde{L}_0,\tilde{L}_1)$). 
Then, 
\begin{gather*}
\partial (p) = \sum_{\substack{q \in L_0 \cap L_1 \\ [u]:ind([u])=1}}(\#\mathcal{M}(p,q;[u],J))T^{\omega([u])}q,
\end{gather*}
where $J$ is an almost complex structure on $M$, $u$ is a holomorphic strip connecting $p$ and $q$, and $\mathcal{M}(p,q;[u],J)$ is the moduli space of holomorphic strips.
We skip the foundational details of the definition of $\partial$.

One can easily check that $\eta \circ u$ is also another holomorphic strip connecting $p$ and $q$. 
Let assume that for a holomorphic strip $u$, the image of $u$ is not contained in $\tilde{M}$. 
Then, $u$ and $\eta \circ u$ will be canceled together in $\partial(p)$, since the Novikov field $\Lambda$ is of characteristic 2.
Thus, in order to define the differential map $\partial$, it is enough to count holomorphic strips $u$ such that the image of $u$ is contained in $\tilde{M}$. 

On the other hands, in order to define $\tilde{\partial} : CF^*(\tilde{L}_0,\tilde{L}_1) \to CF^*(\tilde{L}_0,\tilde{L}_1)$, one needs to count the holomorphic strips on $\tilde{M}$. 
Thus, $\partial(p) = \tilde{\partial}(p)$ for all $p \in L_0 \cap L_1 = \tilde{L}_0 \cap \tilde{L}_1$. 

Under the assumptions, $HF^*(L_0,L_1)= HF^*(\tilde{L}_0, \tilde{L}_1)$. 
Note that the former is defined on $M^{2n}$, but the latter is defined on a surface $\tilde{M}$.
Then, Lemma 2.18 of \cite{MR3289326} completes the proof. 
\qed

\subsection{Example \ref{exmp of homology theorem}}
\label{subsection pA functors - lemma}
In the present subsection, we will prove Lemmas \ref{lemma transversal intersection of Lagrangian branched submanifolds} and \ref{lemma carried by} in order to slightly weaken the difficulty of applying Theorem \ref{thm Lagrangian floer homology}.
Then, we will give the Example \ref{exmp of homology theorem}.

Before giving the statement of Lemmas \ref{lemma transversal intersection of Lagrangian branched submanifolds} and \ref{lemma carried by}, we will establish notation.
In Section \ref{section pseudo-Anosov functors}, $M = P(\alpha_1, \cdots, \alpha_m, \beta_1, \cdots, \beta_l)$ is a plumbing space of Penner type. 
Then, as we did in Section 3.4, we can constructed a set $\mathbb{B}$ of Lagrangian branched submanifolds of $M$.

Every Lagrangian branched submanifold $\mathcal{B} \in \mathbb{B}$ is a union of (parts of) $\alpha_i$ and $\beta_j$ and Lagrangian connected sums $\alpha_i$ and $\beta_j$. 
However, there are two possible Lagrangian connect sums of $\alpha_i$ and $\beta_j$ at each plumbing point $p \in \alpha_i \cap \beta_j$.
They are $\alpha_i \#_p \beta_j$ and $\beta_j \#_p \alpha_i$.
By assuming that $\alpha_i$ is a positive sphere and $\beta_j$ is a negative sphere, one considers the Lagrangian connected sum $\beta_j \#_p \alpha_i$, not $\alpha_i \#_p \beta_j$. 
Similarly, by assuming that $\alpha_i$ is negative and $\beta_j$ is positive, one can construct another set $\mathbb{B}^{op}$ of Lagrangian branched submanifolds.

\begin{lemma}
	\label{lemma transversal intersection of Lagrangian branched submanifolds}
	Let $\mathcal{B}_1, \mathcal{B}_2 \in \mathbb{B} \cup \mathbb{B}^{op}$. 
	Then, there is a Hamiltonian isotopy $\Phi_t : M \to M$ such that 
	\begin{enumerate}
		\item $\Phi_t \circ \eta = \eta \circ \Phi_t$, 
		\item $\mathcal{B}_0 \pitchfork \Phi_1(\mathcal{B}_1)$,
		\item for every $q \in \mathcal{B}_0 \cap \Phi_1(\mathcal{B}_1)$, $q$ is not a plumbing point or the antipodal point of a plumbing point.
	\end{enumerate}
\end{lemma}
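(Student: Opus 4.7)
The plan is to realize $\Phi_t$ as the flow of a generic $\eta$-invariant time-dependent Hamiltonian $H_t$; conditions (1), (2), (3) then follow, respectively, from equivariance of the flow, an equivariant Sard--Smale argument, and a finite codimension argument. First I would verify that every $\mathcal{B}\in\mathbb{B}\cup\mathbb{B}^{\mathrm{op}}$ can be assumed $\eta$-invariant: each sphere $\alpha_i,\beta_j$ is already $\eta$-invariant by the choice of the identifications $\phi_{\alpha_i},\phi_{\beta_j}$ from Section~\ref{subsection pA functors - setting}, every plumbing point lies in the fixed surface $\tilde M$, and the local models for Lagrangian surgery, necks, and branching disks used in the construction of Section 3.4 can all be chosen symmetric under $\eta$. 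Condition (1) is then automatic once $H_t$ is $\eta$-invariant: averaging any $K_t$ to $\tfrac{1}{2}(K_t+K_t\circ\eta)$ produces such an invariant Hamiltonian, and since $\eta^*\omega=\omega$ its flow commutes with $\eta$.

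The heart of the argument is an equivariant parametric transversality statement. Let $\mathcal{H}$ be the Fr\'echet space of smooth, compactly supported, $\eta$-invariant time-dependent Hamiltonians on $M$, and consider the evaluation map
\begin{equation*}
ev:(\mathcal{B}_0)^{\mathrm{sm}}\times(\mathcal{B}_1)^{\mathrm{sm}}\times\mathcal{H}\longrightarrow M\times M,\qquad(p,q,H)\longmapsto(p,\Phi^H_1(q)),
\end{equation*}
where $(\mathcal{B}_i)^{\mathrm{sm}}$ denotes a sector of $\mathcal{B}_i$ and the branch locus is treated stratum by stratum. Off $\tilde M$ the involution acts freely, and any tangent vector at $\Phi^H_1(q)$ can be realized by an equivariant Hamiltonian supported in a small pair of $\eta$-related balls, so $ev$ is a submersion there. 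At a fixed intersection point $q\in\tilde M$ one splits transversality via the $\eta_*$-eigenspace decomposition $T_qM=T_q\tilde M\oplus N_q\tilde M$: the $+1$-part reduces to $\tilde{\mathcal{B}}_0\pitchfork\tilde{\mathcal{B}}_1$ inside the surface $\tilde M$, and since the restriction map $\mathcal{H}\to C^\infty(\tilde M)$ is surjective, this is a classical surface transversality problem for $1$-dimensional branched submanifolds in a $2$-manifold. The $-1$-part is controlled by the Hessian of $H$ in the $\eta_*$-odd directions; writing $H$ in the local model of Section~\ref{subsection pA functors - setting} as a Taylor series $H_0(x_1,x_2)+\sum Q_{ij}(x_1,x_2)x'_ix'_j+O(|x'|^3)$ with $x'=(x_3,\ldots,y_n)$, the coefficients $Q_{ij}$ are free and provide exactly enough freedom to make the $-1$-eigenspaces $(T_q\mathcal{B}_0)^-$ and $(T_q\Phi_1(\mathcal{B}_1))^-$ transverse in $N_q\tilde M$. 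Hence for generic $H\in\mathcal{H}$ condition (2) holds.

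Condition (3) then follows from a finite codimension argument: the set $P$ of plumbing points together with the set $P'$ of their antipodal images on each sphere is finite and contained in $\tilde M$, and for each $p\in P\cup P'$ the locus
\begin{equation*}
\{H\in\mathcal{H}:p\in\mathcal{B}_0\cap\Phi^H_1(\mathcal{B}_1)\}
\end{equation*}
is closed with empty interior in $\mathcal{H}$, since requiring $p\in\Phi^H_1(\mathcal{B}_1)$ imposes $n$ independent conditions on $H$ that can always be broken by a small equivariant perturbation supported near $p$. A Baire-category argument intersecting the open dense set from (2) with the complements of these finitely many bad loci yields a residual subset of $\mathcal{H}$, any element of which produces a flow $\Phi_t$ satisfying (1)--(3). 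The principal obstacle is the equivariant transversality along $\tilde M$: the standard parametric Sard--Smale argument uses arbitrary $C^\infty(M)$-perturbations, while equivariance collapses this along $\tilde M$ to $C^\infty(\tilde M)$ together with the $\eta$-even Hessian in the normal directions. The decisive observation making the argument work is that the transversality condition at a fixed point itself splits $\eta_*$-equivariantly into an independent surface statement and a linear normal statement, each of which is covered by exactly the freedom that remains.
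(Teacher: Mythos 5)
Your route is genuinely different from the paper's. The paper does not invoke any genericity or Sard--Smale machinery: it writes down, for each core sphere $\alpha_i$ (and $\beta_j$), an explicit small function $f_i:\alpha_i\to\mathbb{R}$ with $f_i\circ\eta_{\alpha_i}=f_i$, $f_i(p)=f_i(-p)=0$ at every plumbing point, and all critical points placed on the equator $S_{\alpha_i}$ away from the plumbing points and their antipodals; it then cuts off $f_i$ in the fibre direction to get a compactly supported $\eta$-invariant Hamiltonian whose time-one flow moves $\alpha_i$ to the graph of $df_i$. Conditions (1)--(3) are then read off directly: equivariance is built into $f_i$, the intersection of $\alpha_i$ with $\Gamma(df_i)$ is the (hand-placed) critical set, and intersections between distinct cores near a plumbing point $p$ occur at $(p,dg_j(p))\neq p$ because $p$ is not critical. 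What the explicit construction buys is that it sidesteps entirely the one delicate point in your argument: equivariant transversality along $\tilde M$. As you yourself note, the evaluation map $ev$ is \emph{not} a submersion at a fixed intersection point, because $\eta$-invariant Hamiltonians generate vector fields tangent to $\tilde M$ there; the first-order variations of $\Phi^H_1(q)$ only sweep out $T_q\tilde M$. Your patch --- that the odd-odd Hessian block $Q_{ij}$ moves the $(-1)$-eigenspace of $T_q\Phi_1(\mathcal{B}_1)$ freely among Lagrangians of $N_q\tilde M$ --- is the right idea, but it means the argument cannot be run through the stated parametric-transversality framework for $ev$; it has to be recast as a codimension count on a universal space recording both the intersection point and the normal tangent data (stratified by the dimension of the non-transverse overlap), followed by Sard. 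That reformulation is standard but is exactly the part you would still have to write out. By contrast the paper's choice of $f_i$ makes the normal-direction transversality a non-issue, because the perturbed spheres are graphs over the originals and the intersection locus is controlled pointwise. One caveat that applies to both arguments: transversality of $\alpha_i$ with $\Gamma(df_i)$ requires the critical points of $f_i$ to be nondegenerate, not merely isolated, so your genericity framework would in fact supply that detail more cleanly than the paper states it.
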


\begin{proof}
	Since $\mathcal{B}_1$ is a union of (parts of) compact cores and their Lagrangian connected sums, we will construct Hailtonian isotopies perturbing each compact cores $\alpha_i$ and $\beta_j$. 
	Then, one obtains a perturbation of $\mathcal{B}_1$ as a union of (parts of) perturbations of  $\alpha_i$, $\beta_j$ and their Lagrangian connected sums.  
	
	First, we choose a smooth function $f_i: \alpha_i \to \mathbb{R}$ with isolated critical points such that 
	\begin{enumerate}
		\item every plumbing point $p \in \alpha_i$, $f_i(p)=f_i(-p)=0$, where $-p$ is the antipodal point of $p$ on $\alpha_i$, 
		\item every critical point $q$ of $f_i$ lies on $S_{\alpha_i}$ and $q \neq p, -p$ for any plumbing point $p \in \alpha_i$,
		\item $|df_i(x)| < \epsilon$ for all $x \in \alpha_i$ and for a sufficiently small fixed positive number $\epsilon$,
		\item $f_i \circ \eta_{\alpha_i} = f_i$, where $\eta_{\alpha_i}$ is the involution on $T^*\alpha_i$ defined in Section \ref{subsection pA functors - setting}.  
	\end{enumerate}
	We remark that
	\begin{gather*}
	T^*\alpha_i \stackrel{\phi_{\alpha_i}}{\simeq} T^*S^n = \{(x,y) \in \mathbb{R}^{n+1} \times \mathbb{R}^{n+1} \hspace{0.2em} | \hspace{0.2em} |x| =1, <x,y>=0 \},
	\end{gather*}	
	where $\phi_{\alpha_i}: T^*S^n \stackrel{\sim}{\to} T^*\alpha_i$ is the identification which we used in Section \ref{subsection pA functors - setting}.
	Also, we remark that in (3), $|df_i(x)|$ is given by the standard metric on $\mathbb{R}^{2n+2}$.	
	
	Then, we can extend $f_i$ to $\tilde{f}_i : T^*\alpha_i \to \mathbb{R}$ as follows.
	Let $\delta: [0,\infty) \to \mathbb{R}$ be a smooth decreasing function such that 
	$$\delta([0,\epsilon]) = 1, \delta([2\epsilon, \infty)) = 0.$$
	We set 
	$$\tilde{f}_i : T^*\alpha_i \to \mathbb{R}, \tilde{f}_i(x,y) = \delta(|y|) f_i(x).$$
	Similarly, we can get $\tilde{g}_j : T^*\beta_j \to \mathbb{R}$ in the same way.
	
	These Hamiltonian functions $\tilde{f}_i$ and $\tilde{g}_j$ induce Hamiltonian isotopies on $T^*\alpha_i$ and $T^*\beta_j$. 
	Moreover, these Hamiltonian isotopies could be extended on the plumbing space $M$ since the Hamiltonian isotopies have compact supports on $T^*\alpha_i$ and $T^*\beta_j$.
	
	Let $\Phi_{\alpha_i,t}:M \stackrel{\sim}{\to} M$ be the (extended) Hamiltonian isotopy associated to $\tilde{f}_i$. 
	Then, it is easy to check that 
	\begin{gather*}
	\Phi_{\alpha_i,t} \circ \eta = \eta \circ \Phi_{\alpha_i,t}, \\
	\Phi_{\alpha_i,t}(\alpha_k) = \alpha_k, \text{  if  } k \neq i, \\
	\Phi_{\alpha_i,t}(\beta_j) = \beta_j \text{  for all  } j, \\
	\Phi_{\alpha_i,1}(\alpha_i) = \Gamma(d f_i),
	\end{gather*}
	where $\Gamma(d f_i)$ is the graph of $d f_i$ in $T^*\alpha_i \subset M$. 
	Similarly, one can obtain a Hamiltonian isotopy $\Phi_{\beta_j,t}:M \stackrel{\sim}{\to} M$ for each $\beta_j$ in the same way. 
	
	Let 
	$$\Phi_t = \prod_{\beta_j} \Phi_{\beta_j,t} \circ \prod_{\alpha_i} \Phi_{\alpha_i, t}.$$
	Then, it is easy to check that $\Phi_t$ satisfies the first condition of Lemma \ref{lemma transversal intersection of Lagrangian branched submanifolds}.
	Moreover, one can assume that $\Phi_1(\mathcal{B}_1)$ is constructed from $\Phi_1(\alpha_i)$ and $\Phi_1(\beta_j)$.
	Thus, it is easy to prove that $\mathcal{B}_0$ and $\Phi_1(\mathcal{B}_1)$ satisfy the second and the last conditions of Lemma \ref{lemma transversal intersection of Lagrangian branched submanifolds}.
\end{proof}

From now on, we will explain how Lemma \ref{lemma transversal intersection of Lagrangian branched submanifolds} weakens a difficulty of applying Theorem \ref{thm Lagrangian floer homology}. 
The difficulty we will consider is the last condition of Theorem \ref{thm Lagrangian floer homology}, i.e., $L_0 \cap L_1 = \tilde{L}_0 \cap \tilde{L}_1$

Let assume that $L_0$ (resp.\ $L_1$) is a Lagrangian submanifold which is carried by $\mathcal{B}_0$ (resp,\ $\mathcal{B}_1$) $\in \mathbb{B} \cup \mathbb{B}^{op}$. 
Note that $\Phi_1(L_1)$ is carried by $\Phi_1(\mathcal{B}_1)$, where $\Phi_1$ is the Hamiltonian isotopy constructed in Lemma \ref{lemma transversal intersection of Lagrangian branched submanifolds}.
We will count the numbers of intersections $L_0 \cap \Phi_1(L_1)$ and $\tilde{L}_0 \cap \Phi_1(\tilde{L}_1)$. 
If these numbers are the same, then $L_0 \cap \Phi_1(L_1) = \tilde{L}_0 \cap \Phi_1(\tilde{L}_1)$.

First, we remark that $\tilde{L}_0$ (resp.\ $\Phi_1(\tilde{L}_1)$) is a curve which is carried by a train track $\mathcal{B}_0 \cap \tilde{M}$ (resp.\ $\Phi_1(\mathcal{B}_1) \cap \tilde{M}$). 
Then, $\tilde{L}_0$ (resp.\ $\Phi_1(\tilde{L}_1)$) has weights on the train track $\mathcal{B}_0 \cap \tilde{M}$ (resp.\ $\Phi_1(\mathcal{B}_1) \cap \tilde{M}$).
Moreover, the number of $\tilde{L}_0 \cap \Phi_1(\tilde{L}_1)$ is the following:
$$\sum_{x \in \mathcal{B}_0 \cap \Phi_1(\mathcal{B}_1)} (\text{the weight of $\tilde{L}_0$ at } x) \cdot (\text{the weight of $\Phi_1(\tilde{L}_1)$ at  } x).$$ 

To count the number of $L_0 \cap \Phi_1(L_1)$, we can assume that $L_0 \cap \Phi_1(L_1)$ is contained in a small neighborhood of  $\mathcal{B}_0 \cap \Phi_1(\mathcal{B}_1)$.
Since $L_0$ is carried by $\mathcal{B}_0$, not strongly carried by, $L_0$ can have singular points. 
However, the singular points are lying near plumbing points or the antipodal of plumbing points.
Since the intersection points of $\mathcal{B}_0$ and $\Phi_1(\mathcal{B}_1)$ are not plumbing points or their antipodals, every $x \in L_0 \cap \Phi_1(L_1)$ is a regular point of $L_0$ (resp.\ $\Phi_1(L_1)$). 
It means that the number $|L_0 \cap \Phi_1(L_1)|$ is also give by
$$\sum_{x \in \mathcal{B}_0 \cap \Phi_1(\mathcal{B}_1)} (\text{the weight of $\tilde{L}_0$ at } x) \cdot (\text{the weight of $\Phi_1(\tilde{L}_1)$ at  } x).$$
Thus, $|L_0 \cap \Phi_1(L_1)| = |\tilde{L}_0 \cap \Phi_1(\tilde{L}_1)|$.

\begin{lemma}
	\label{lemma carried by}
	Let $L_0$ and $L_1$ be carried by $\mathcal{B}_0, \mathcal{B}_1 \in \mathbb{B} \cup \mathbb{B}^{op}$.
	Then, there is a Hamiltonian isotopy $\Phi_t$ such that 
	$$L_0 \cap \Phi_1(L_1) = \tilde{L}_0 \cap \Phi_1(\tilde{L}_1).$$
\end{lemma}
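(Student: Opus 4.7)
The plan is to invoke Lemma~\ref{lemma transversal intersection of Lagrangian branched submanifolds} to fix the isotopy $\Phi_t$, and then make rigorous the weight-counting sketch given in the paragraph preceding the statement. First I would apply Lemma~\ref{lemma transversal intersection of Lagrangian branched submanifolds} to the pair $(\mathcal{B}_0, \mathcal{B}_1)$, producing an $\eta$-equivariant Hamiltonian isotopy $\Phi_t$ so that $\mathcal{B}_0 \pitchfork \Phi_1(\mathcal{B}_1)$ and no point of $\mathcal{B}_0 \cap \Phi_1(\mathcal{B}_1)$ is a plumbing point or the antipodal of one. Because $L_0$ and $L_1$ are carried by $\mathcal{B}_0$ and $\mathcal{B}_1$, after a further Hamiltonian perturbation I may assume $L_0 \subset N(\mathcal{B}_0)$ and $\Phi_1(L_1) \subset N(\Phi_1(\mathcal{B}_1))$ inside arbitrarily thin fibered neighborhoods, so that $L_0 \cap \Phi_1(L_1)$ is concentrated in a small neighborhood of $\mathcal{B}_0 \cap \Phi_1(\mathcal{B}_1)$.

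Next I would analyze a single point $x \in \mathcal{B}_0 \cap \Phi_1(\mathcal{B}_1)$. By Remark~\ref{rmk position of singular value} together with Definition~\ref{def of fully/weakly carried by}, every singular component of $\pi : L_0 \to \mathcal{B}_0^*$, and similarly for $\Phi_1(L_1)$, lies near a plumbing point or its antipodal; Lemma~\ref{lemma transversal intersection of Lagrangian branched submanifolds} rules these loci out from $\mathcal{B}_0 \cap \Phi_1(\mathcal{B}_1)$, so near $x$ both $L_0$ and $\Phi_1(L_1)$ are disjoint unions of smooth Lagrangian sheets transverse to the ambient fiber through $x$. Their local sheet counts $w_0(x)$ and $w_1(x)$ coincide with the weights of the train tracks $\mathcal{B}_0 \cap \tilde{M}$ and $\Phi_1(\mathcal{B}_1) \cap \tilde{M}$ at $x$, because the fibered neighborhood construction of Section~3.2 can be chosen $\eta$-equivariantly and each sheet of $L_i$ is then in bijection with its trace on $\tilde{M}$. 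Counting transverse intersections sheet-by-sheet yields $w_0(x)\,w_1(x)$ intersection points near $x$, both for $L_0 \cap \Phi_1(L_1)$ and for $\tilde{L}_0 \cap \Phi_1(\tilde{L}_1)$.

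Summing over the finite set $\mathcal{B}_0 \cap \Phi_1(\mathcal{B}_1)$ then gives $|L_0 \cap \Phi_1(L_1)| = |\tilde{L}_0 \cap \Phi_1(\tilde{L}_1)|$. Since $\Phi_1$ preserves $\tilde{M}$ (from $\Phi_1 \circ \eta = \eta \circ \Phi_1$), the tautological inclusion $\tilde{L}_0 \cap \Phi_1(\tilde{L}_1) \subseteq L_0 \cap \Phi_1(L_1)$ is available, and equality of finite cardinalities upgrades to equality of sets, as required.

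The main obstacle I anticipate is identifying the ambient sheet count of the carried Lagrangian $L_0$ at $x$ with the train-track weight of $\mathcal{B}_0 \cap \tilde{M}$ at $x$: ``carried'' is defined via fibers of $\pi$ rather than directly via weights, so one has to exhibit an $\eta$-invariant choice of $N(\mathcal{B}_0)$ whose fibers are bisected by $\tilde{M}$ into two mirror halves, and argue that each ambient sheet of $L_0$ corresponds bijectively to a single arc of $\tilde{L}_0$ on the surface. Once this matching is in place, the remainder of the argument is an immediate cardinality comparison.
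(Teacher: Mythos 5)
Your argument is correct and is essentially the paper's own: the paper proves this lemma in the discussion immediately preceding its statement, by invoking Lemma \ref{lemma transversal intersection of Lagrangian branched submanifolds}, noting that singular points of the carried Lagrangians lie only near plumbing points and their antipodes (which are excluded from $\mathcal{B}_0 \cap \Phi_1(\mathcal{B}_1)$), and matching both intersection counts to the same train-track weight sum. Your closing step, upgrading the equality of cardinalities to equality of sets via the inclusion $\tilde{L}_0 \cap \Phi_1(\tilde{L}_1) \subseteq L_0 \cap \Phi_1(L_1)$, makes explicit what the paper leaves implicit, and the obstacle you flag (identifying ambient sheet counts with surface weights) is indeed the point the paper glosses over.
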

Thus, if $L_0$ and $L_1$ are carried by $\mathcal{B}_0, \mathcal{B}_1 \in \mathbb{B} \cup \mathbb{B}^{op}$, and if $L_0$ and $L_1$ satisfy conditions (1), (2), and (4) of Theorem \ref{thm Lagrangian floer homology}, then one can apply Theorem \ref{thm Lagrangian floer homology} for $L_0$ and $\Phi_1(L_1)$.

\begin{exmp}
	\label{exmp of homology theorem}
	Let $\psi_0$ and $\psi_1$ be symplectomorphisms of Penner type, i.e., $\psi_0$ and $\psi_1$ are products of positive (resp.\ negative) powers of $\tau_i$ and negative (resp.\ positive) powers of $\sigma_j$, where $\tau_i$ and $\sigma_j$ are Dehn twists along $\alpha_i$ and $\beta_j$ respectively.
	Let assume that $L_0$ (resp.\ $L_1$) is a Lagrangian submanifold of $M$, which is generated from one of compact cores by applying $\psi_0$ (resp.\ $\psi_1$), i.e.,
	$$L_0 = \psi_0(\alpha_k) \text{  or  } \psi_0(\beta_j), \hspace{0.2em} L_1 = \psi_1(\alpha_k) \text{  or  } \psi_1(\beta_j).$$ 

	Then, $\eta(L_i) = L_i$ since 
	\begin{gather*}
	\eta(\alpha_i) = \alpha_i \text{ for all  } i, \eta(\beta_j)=\beta_j \text{  for all  } j, \\
	\eta \circ \tau_i = \tau_i \circ \eta \text{  for all  } i, \eta \circ \sigma_j = \sigma_j \circ \eta \text{  for all } j. 
	\end{gather*}
	Moreover, $\tilde{L}_i = \psi_i(\tilde{\alpha}_k)$ or $\psi_i(\tilde{\beta_j})$.
	Thus, $\tilde{L}_i$ is a Lagrangian submanifold of $\tilde{M}$.
	Finally, $L_i$ is carried by $\mathcal{B}_{\psi_i}$.
	
	Thus, if $L_0$ and $L_1$ are not isotopic to each other, then one can apply Theorem \ref{thm Lagrangian floer homology}. 
\end{exmp}
\bibliographystyle{abbrv}
\bibliography{reference}

\begin{thebibliography}{10}

\bibitem{MR2984118}
M.~Abouzaid and I.~Smith.
\newblock Exact {L}agrangians in plumbings.
\newblock {\em Geom. Funct. Anal.}, 22(4):785--831, 2012.

\bibitem{MR964685}
A.~J. Casson and S.~A. Bleiler.
\newblock {\em Automorphisms of surfaces after {N}ielsen and {T}hurston},
  volume~9 of {\em London Mathematical Society Student Texts}.
\newblock Cambridge University Press, Cambridge, 1988.

\bibitem{MR3289326}
G.~Dimitrov, F.~Haiden, L.~Katzarkov, and M.~Kontsevich.
\newblock Dynamical systems and categories.
\newblock In {\em The influence of {S}olomon {L}efschetz in geometry and
  topology}, volume 621 of {\em Contemp. Math.}, pages 133--170. Amer. Math.
  Soc., Providence, RI, 2014.

\bibitem{MR2850125}
B.~Farb and D.~Margalit.
\newblock {\em A primer on mapping class groups}, volume~49 of {\em Princeton
  Mathematical Series}.
\newblock Princeton University Press, Princeton, NJ, 2012.

\bibitem{MR721458}
W.~Floyd and U.~Oertel.
\newblock Incompressible surfaces via branched surfaces.
\newblock {\em Topology}, 23(1):117--125, 1984.

\bibitem{MR3626601}
D.~Nadler.
\newblock Arboreal singularities.
\newblock {\em Geom. Topol.}, 21(2):1231--1274, 2017.

\bibitem{MR746535}
U.~Oertel.
\newblock Incompressible branched surfaces.
\newblock {\em Invent. Math.}, 76(3):385--410, 1984.

\bibitem{MR930079}
R.~C. Penner.
\newblock A construction of pseudo-{A}nosov homeomorphisms.
\newblock {\em Trans. Amer. Math. Soc.}, 310(1):179--197, 1988.

\bibitem{MR1743463}
P.~Seidel.
\newblock Lagrangian two-spheres can be symplectically knotted.
\newblock {\em J. Differential Geom.}, 52(1):145--171, 1999.

\bibitem{MR956596}
W.~P. Thurston.
\newblock On the geometry and dynamics of diffeomorphisms of surfaces.
\newblock {\em Bull. Amer. Math. Soc. (N.S.)}, 19(2):417--431, 1988.

\bibitem{MR1435975}
W.~P. Thurston.
\newblock {\em Three-dimensional geometry and topology. {V}ol. 1}, volume~35 of
  {\em Princeton Mathematical Series}.
\newblock Princeton University Press, Princeton, NJ, 1997.
\newblock Edited by Silvio Levy.

\bibitem{MR0286137}
A.~Weinstein.
\newblock Symplectic manifolds and their {L}agrangian submanifolds.
\newblock {\em Advances in Math.}, 6:329--346 (1971), 1971.

\bibitem{MR3201896}
W.~Wu.
\newblock Exact {L}agrangians in {$A_n$}-surface singularities.
\newblock {\em Math. Ann.}, 359(1-2):153--168, 2014.

\end{thebibliography}
\end{document}